
\documentclass[reqno,10pt]{amsart}
\usepackage{amsmath,amssymb,amsthm,mathrsfs}
\usepackage{longtable}
\usepackage{enumerate}
\usepackage{graphicx}
\usepackage{subcaption}
\usepackage{wasysym}
\usepackage{upgreek}
\usepackage[dvipsnames]{xcolor}
\usepackage{cancel}
\usepackage{soul}
\usepackage{mathtools}
\usepackage{tikz}
\usepackage{pgfplots}
\usetikzlibrary{decorations.markings}

\usepackage{comment}
\usepackage{caption}
\allowdisplaybreaks

\numberwithin{equation}{section}

\theoremstyle{plain}
\newtheorem{theorem}{Theorem}[section]
\newtheorem{lemma}[theorem]{Lemma}
\newtheorem{prop}[theorem]{Proposition}

\newtheorem{definition}[theorem]{Definition}
\newtheorem{remark}[theorem]{Remark}

\usepackage{color}

\title{A class of Bernstein-type operators on the unit disk}
\author[M. J. Recarte, M. E. Marriaga, T. E. P\'erez]{M. J. Recarte, M. E. Marriaga, T. E. P\'erez}

\address[M. J. Recarte]{Departamento de Física, Universidad Nacional Autónoma de Honduras en el valle de Sula (Honduras)}
\email{marlon.recarte@unah.edu.hn}

\address[M. E. Marriaga]{Departamento de Matem\'atica Aplicada, Ciencia e Ingenier\'ia de Materiales y
Tecnolog\'ia Electr\'onica, Universidad Rey Juan Carlos (Spain)}
\email{misael.marriaga@urjc.es}

\address[T. E. P\'erez]{Instituto 
de Matem\'aticas IMAG \&
Departamento de Matem\'{a}tica Aplicada, Facultad de Ciencias. Universidad de Granada (Spain)}
\email{tperez@ugr.es}

\thanks{MEM has been supported by Ministerio de Ciencia, Innovaci\'{o}n y Universidades (MICINN) grant PGC2018-096504-B-C33. TEP thanks FEDER/Junta de Andaluc\'ia under grant A-FQM-246-UGR20; MCIN/AEI 10.13039/501100011033 and FEDER funds under grant PGC2018-094932-B-I00; and IMAG-Mar\'ia de Maeztu grant CEX2020-001105-M}

\begin{document}

\begin{abstract}
    We construct and study sequences of linear operators of Bernstein-type acting on bivariate functions defined on the unit disk.  To this end, we study Bernstein-type operators under a domain transformation, we analyse the bivariate Bernstein-Stancu operators, and we introduce Bernstein-type operators on disk quadrants by means of continuously differentiable transformations of the function. We state convergence results for continuous functions and we estimate the rate of convergence. Finally some interesting numerical examples are given, comparing approximations using the shifted Bernstein-Stancu and the Bernstein-type operator on disk quadrants. 
\end{abstract}

\maketitle

\date{\today}

\section{Preliminaries}

In 1912, S. Bernstein (\cite{Be12}) published a constructive proof of the Weierstrass approximation theorem that affirms that every continuous function $f(x)$ defined on a closed interval can be uniformly approximated by polynomials. For a given function $f\in C[0,1]$, Bernstein constructed a sequence of polynomials (lately called Bernstein polynomials) in the form
\begin{equation}\label{B-classic}
B_nf(x)\equiv (B_n\,f)(x) = \sum_{k=0}^n \, f\left(\frac{k}{n}\right)\, \binom{n}{k}\,x^k (1-x)^{n-k}, 
\end{equation}
for $0\leqslant x \leqslant 1$, and $n\geqslant0$.

Clearly, $B_n \,f$ is a polynomial in the variable $x$ of degree less than or equal to $n$, and \eqref{B-classic} can be seen as a linear operator that transforms functions defined on $[0,1]$ to polynomials of degree at most $n$.

Hence, in the sequel, we will refer to $B_n$ as the $n$-th classical univariate \textit{Bernstein operator}.

If we define
\begin{equation}\label{p_nk_classic}
p_{n,k}(x)= \binom{n}{k}\,x^k (1-x)^{n-k}, \quad 0\leqslant x \leqslant 1, \quad 0 \leqslant k \leqslant n,
\end{equation}
then, the set $\{p_{n,k}(x): 0 \leqslant k \leqslant n\}$ is a basis of the linear space of polynomials with real coefficients of degree at most $n$, that we will denote $\Pi_n$, called \textit{Bernstein basis}. Then, the $n$-th Bernstein polynomial associated with $f(x)$ is usually written as
\begin{equation*}
B_n \,f(x) = \sum_{k=0}^n \, f\left(\frac{k}{n}\right)\,p_{n,k}(x).
\end{equation*}

\bigskip

Among others, classical Bernstein operators satisfy the following properties (\cite{L97}):

\begin{itemize}

\item They are linear and positive operators acting on the function $f$, and preserve the constant functions as well as polynomials of degree 1, that is,
$$
B_n \,1 = 1, \quad B_n \, x = x, \quad n\geqslant 0.
$$

\item If $f$ is continuous at a point $x$, then $B_n\,f(x)$ converges to $f(x)$, and $B_n\,f$ converges uniformly if $f$ is continuous on the whole interval $[0,1]$. Moreover, the order of approximation is $\omega_f(n^{-1/2})$, where $\omega_f$ denotes the modulus of continuity of $f$. Because of this property, Bernstein operators are called \textit{Bernstein Approximants}.

\item Bernstein operators satisfy a Voronowskaja type theorem, that is, if $f$ is twice differentiable at $x$, then $B_n\,f(x) - f(x) = \mathcal{O}(1/n)$.

\end{itemize}

The Bernstein operators admit a complete system of polynomial eigenfunctions. However, each eigenfunction depends on $n$ and, thus, is associated with the $n$-th Bernstein operator $B_n$. Another inconvenience of Bernstein operators associated to an adequate function $f$ is its slow rate of convergence towards $f$.

\medskip

For years, several modifications and extensions of Bernstein operators have been studied. The modifications have been introduced in several directions, and we only recall a few interesting cases and cite some papers. For instance, it is possible to substitute the values of the function on equally spaced points by other mean values such as integrals, as was stated in the pioneering papers of Durrmeyer (\cite{Du67}) and Derriennic (\cite{De81}, \cite{De85}). In \cite{CGM06}, the operator is modified in order to preserve some properties of the original function. Another group of modifications given by the transformation of the function by means a convenient continuous and differentiable functions is analysed in \cite{CGR11}; and, of course, the extension of the Bernstein operators to the multivariate case. The most common extension of the Bernstein operator is defined on the unit simplex in higher dimensions (\cite{L97}, \cite{BeScXu92}, \cite{Schurer63}, \cite{Stan63}, among others), since the basic polynomials \eqref{p_nk_classic} can be easily extended to the simplex.

\medskip

In this paper, we are interested in finding an extension of the Bernstein operator to approximate functions defined on the unit disk. In this way, we will need two kinds of modifications: by transformation of the argument of the function to be approximated, and by definition of an adequate basis of functions as \eqref{p_nk_classic}. We present and study two Bernstein-type approximants, and we compare them by means of several examples.

The structure of the paper is as follows. Section \ref{sec1} is devoted to collecting the properties of univariate Bernstein-type operators that we will need along the paper. In Section \ref{sec_BS}, we recall the method introduced by Stancu (\cite{Stan63}) for obtaining Bernstein-type operators in two variables by the successive application of Bernstein operators in one variable. In Section \ref{sec_ShiBS} and Section \ref{sec_SBT}, we define the \textit{shifted $n$-th Bernstein-Stancu operator} and the \textit{shifted $n$-th Bernstein-type operator}, and study their respective approximation properties. The last section is devoted to analyzing several examples, comparing the approximation results for both Bernstein-type operators on the disk.

\bigskip

\section{Univariate Bernstein-type operators}\label{sec1}

In this section, we recall the modified univariate Bernstein-type operators that we will need later. We start by shifting the univariate Bernstein operator.

Using the change of variable 
\begin{equation}\label{change-of-variable}
x=(\beta-\alpha)\,s+\alpha, \quad  \alpha<\beta, \quad 0\leqslant s \leqslant 1,
\end{equation}
the univariate Bernstein basis can be defined on the interval $[\alpha,\beta]$. Indeed, if we let
$$
\widetilde{p}_{n,k}(x;[\alpha,\beta])\,=\,p_{n,k}\left( \frac{x-\alpha}{\beta-\alpha}\right)= \frac{1}{(\beta-\alpha)^n}\binom{n}{k}(x-\alpha)^k(\beta-x)^{n-k},  \quad \alpha\leqslant x \leqslant \beta,
$$
then the set $\{\widetilde{p}_{n,k}(x;[\alpha,\beta]):\,n\geqslant 0, \, 0\leqslant k \leqslant n,\,\alpha\leqslant x \leqslant \beta \}$ is a basis of $\Pi_n$ on the interval $[\alpha,\beta]$ satisfying
\begin{equation*}
	\begin{aligned}
\sum_{k=0}^n \widetilde{p}_{n,k}(x;[\alpha,\beta]) =& \sum_{k=0}^n \,p_{n,k}\left( \frac{x-\alpha}{\beta-\alpha}\right) =
\frac{1}{(\beta-\alpha)^n}\sum_{k=0}^n \binom{n}{k}(x-\alpha)^k(\beta-x)^{n-k} \\ =& \frac{1}{(\beta-\alpha)^n}(x-\alpha + \beta-x)^{n} = 1.
\end{aligned}
\end{equation*}
Moreover, since
$$
\widetilde{p}_{n,k}(x;[\alpha,\beta])\,=\,p_{n,k}(s), \quad 0\leqslant s \leqslant 1, \quad 0\leqslant n, \quad 0\leqslant k \leqslant n,
$$
we have that Bernstein basis on $[\alpha,\beta]$ (see Figure \ref{fbasis})  satisfies the following properties:
\begin{itemize}
    \item $\widetilde{p}_{n,k}(x;[\alpha,\beta])\geqslant 0$ for $\alpha\leqslant x \leqslant \beta$,
    \item $\widetilde{p}_{n,k}(\alpha)=\delta_{0,k}$ and $\widetilde{p}_{n,k}(\beta)=\delta_{k,n}$, where, as usual, $\delta_{\nu,\eta}$ denotes the Kronecker delta,
    \item $(\beta-\alpha)\,\widetilde{p}^{\,\prime}_{n,k}(x;[\alpha,\beta])\,=\,n\left(\widetilde{p}_{n-1,k-1}(x;[\alpha,\beta])-p_{n-1,k}(x;[\alpha,\beta]) \right)$,
    \item If $n\ne 0$, then $\widetilde{p}_{n,k}(x;[\alpha,\beta])$ has a unique local maximum on $[\alpha,\beta]$ at $x=(\beta-\alpha)\,\frac{k}{n}+\alpha$. This maximum takes the value
    $$
    \widetilde{p}_{n,k}\left((\beta-\alpha)\,\frac{k}{n}+\alpha;[\alpha,\beta]\right)\,=\,p_{n,k}\left(\frac{k}{n} \right)= \binom{n}{k} \frac{k^k}{n^n}(n-k)^{n-k}.
    $$
\end{itemize}
\tikzset{
    mark position/.style args={#1(#2)}{
        postaction={
            decorate,
            decoration={
                markings,
                mark=at position #1 with \coordinate (#2);
            }
        }
    }
}

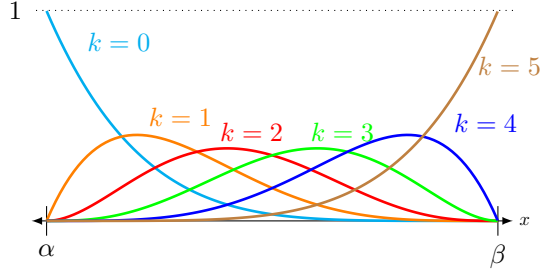
\begin{figure}[h!]
    \centering
\begin{tikzpicture}[scale=2,yscale=1.4]
\pgfmathsetmacro{\alph}{-1}
\pgfmathsetmacro{\bet}{2}
\foreach \x/\xtext in {\alph/\alpha,\bet/\beta}\draw[shift={(\x,0)}] (0pt,2pt) -- (0pt,-2pt) node[below] {$\xtext$};
\draw[latex-latex] ({-0.1+\alph},0)--({\bet+0.1},0)node[scale=0.7,right]{$x$};
\draw[dotted] ({\bet+0.1},1)--({-0.1+\alph},1) node[scale=1,left]{$1$};
\foreach \n in {5}
\foreach \k/\col/\ps in {0/cyan/0.1,1/orange/0.25, 2/red/0.36, 3/green/0.54, 4/blue/0.8, 5/brown/0.85}
{
\draw[line width=1pt,samples=100,color=\col,domain=\alph:\bet,mark position=\ps (a)] plot (\x,{1/(\bet-\alph)^(\n)*(\n!/(\k!*(\n-\k)!)*(\x-\alph)^(\k)*(\bet-\x)^(\n-\k)});
\node[color=\col,above=7pt,right=3pt] at (a) {$k=\k$}
;
}
\end{tikzpicture}
    \caption{Bernstein on basis $[\alpha,\beta]$  for $n=5$.} 
    \label{fbasis}
\end{figure}

For every function $f$ defined on $I=[\alpha,\beta]$, we can define the shifted univariate $n$-th Bernstein operator as
$$
\widetilde{B}_n\,[f(x),I] = \sum_{k=0}^n f\left((\beta-\alpha)\frac{k}{n}+\alpha\right)\,\widetilde{p}_{n,k}(x;I).
$$
Note that $\widetilde{B}_n\,[f(x),I]$ is a polynomial of degree at most $n$. In this way,
$$
\widetilde{B}_n\,[f(x),I] = B_n\,F(s), \quad 0\leqslant s \leqslant 1,
$$
where $F(s)=f((\beta-\alpha)\,s+\alpha)$ is a function defined on $[0,1]$ associated with $f$. From this, and since the change of variable \eqref{change-of-variable} is linear, it is clear that $\widetilde{B}_n$ has analogous properties to those satisfied by the classical Bernstein operator.

\bigskip

In the sequel, we will use the following Bernstein-type operator studied in \cite{CGR11} and \cite{GPR09}:
$$
C_n^{\tau}f  = B_n\,(f\circ \tau^{-1})\circ \tau,
$$
where $\tau$ is any function continuously differentiable as many times as necessary, such that $\tau (0) = 0, \, \tau(1) = 1$, and $\tau'(x) > 0$ for $x \in [0, 1]$. Throughout this work, it will be sufficient for $\tau$ to be continously differentiable.

\medskip

In \cite{CGR11}, the following identities were given:
$$
C_n^{\tau}\,1\,=\,1, \quad C_n^{\tau}\,\tau\,=\,\tau, \quad C_n^{\tau}\,\tau^2\,=\,\left(1-\frac{1}{n}\right)\tau^2+\frac{\tau}{n}.
$$
We have the following result.

\begin{prop}
Let $f$ be a continuous function on $[0,1]$ and $\tau$ is any function that is continuously differentiable, such that $\tau (0) = 0, \, \tau(1) = 1$, and $\tau'(x) > 0$ for $x \in [0, 1]$. Then,
$$
\lim_{n\to \infty} C_n^{\tau}f(x) \, = \, f(x).
$$
That is, $C_n^{\tau}f(x)$ converges uniformly to $f$ on $[0,1]$. 
\end{prop}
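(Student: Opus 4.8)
The plan is to reduce the statement to the already-known uniform convergence of the classical Bernstein operator $B_n$ on $C[0,1]$, and to note in passing that the three moment identities recorded just before the proposition make an alternative Korovkin-type argument equally quick.

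First I would verify that $\tau$ is a homeomorphism of $[0,1]$ onto itself. Since $\tau$ is continuously differentiable with $\tau'(x)>0$ on $[0,1]$, it is strictly increasing, and together with $\tau(0)=0$, $\tau(1)=1$ this makes $\tau$ a continuous bijection of the compact interval $[0,1]$ onto $[0,1]$; hence $\tau^{-1}$ is continuous as well. Consequently $g:=f\circ\tau^{-1}$ belongs to $C[0,1]$ whenever $f$ does. With this $g$, the definition gives $C_n^{\tau}f=(B_n g)\circ\tau$, while $f=g\circ\tau$, so that $C_n^{\tau}f(x)-f(x)=(B_n g)(\tau(x))-g(\tau(x))$ for every $x\in[0,1]$. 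Because $\tau$ maps $[0,1]$ onto $[0,1]$, taking the supremum over $x$ yields $\|C_n^{\tau}f-f\|_{\infty}=\|B_n g-g\|_{\infty}$, and the classical Bernstein theorem gives $\|B_n g-g\|_{\infty}\to 0$. The only point requiring care is the surjectivity of $\tau$, which guarantees that nothing is lost when passing from the supremum over the values $\tau(x)$ to the supremum over all of $[0,1]$; this is precisely where the normalization $\tau(0)=0$, $\tau(1)=1$ is used.

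Alternatively — and this is presumably why the identities $C_n^{\tau}1=1$, $C_n^{\tau}\tau=\tau$, $C_n^{\tau}\tau^2=(1-1/n)\tau^2+\tau/n$ are stated above — one can argue directly in the spirit of the Korovkin theorem. Since $C_n^{\tau}$ is positive and reproduces constants, $|C_n^{\tau}f(x)-f(x)|\le C_n^{\tau}\big(|f(\cdot)-f(x)|\big)(x)$. Using uniform continuity of $g=f\circ\tau^{-1}$ one bounds $|f(t)-f(x)|\le\varepsilon+\tfrac{2M}{\delta^2}\,(\tau(t)-\tau(x))^2$ with $M=\|f\|_{\infty}$, and applying $C_n^{\tau}$ together with the three identities collapses the quadratic term to $\tfrac{1}{n}\,\tau(x)\,(1-\tau(x))\le\tfrac{1}{4n}$. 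Hence $\|C_n^{\tau}f-f\|_{\infty}\le\varepsilon+\tfrac{M}{2n\delta^2}$, and letting $n\to\infty$ and then $\varepsilon\to 0$ gives the result.

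In either route the obstacles are modest. For the reduction, the one genuine subtlety is establishing that $\tau$ is a homeomorphism so that $f\circ\tau^{-1}$ is again continuous. For the Korovkin route, the corresponding step I expect to be the most delicate is the uniform-continuity estimate: one must produce, for each $\varepsilon$, a single $\delta$ valid for all $x$ simultaneously, which is legitimate precisely because $g$ is continuous on the compact set $[0,1]$. I would present the reduction as the main argument, since it is shortest and isolates the role of the endpoint normalization most transparently.
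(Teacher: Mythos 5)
Your main argument is exactly the paper's proof: substitute $u=\tau(x)$ to rewrite $C_n^{\tau}f=(B_n(f\circ\tau^{-1}))\circ\tau$ and invoke uniform convergence of the classical Bernstein operator for the continuous function $f\circ\tau^{-1}$. Your version is in fact slightly more careful (you justify that $\tau$ is a homeomorphism and that surjectivity preserves the supremum), and the Korovkin alternative is a valid but unnecessary bonus.
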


\begin{proof}
Set $u = \tau(x)$. We compute
\begin{align*}
 C_n^{\tau}f(x) &= \sum_{k=0}^n f\left( \tau^{-1}\left(\frac{k}{n}\right)\right)\,p_{n,k}(\tau(x)) \\
&= \sum_{k=0}^nf\left( \tau^{-1}\left(\frac{k}{n}\right)\right)\,p_{n,k}(u)\\
&= B_nf\left( \tau^{-1}\left(u\right)\right).
\end{align*}
Since $B_nf\left( \tau^{-1}\left(u\right)\right)\rightarrow f\left( \tau^{-1}\left(u\right)\right)=f(x)$ as $n\rightarrow +\infty$, the result follows from taking the limit on both sides of  $C_n^{\tau}f(x)= B_nf\left( x\right)$.
\end{proof}

\bigskip

We also introduce the following shifted Bernstein-type operator
$$
\widetilde{C}_n^{\tau}[f(x),[\alpha,\beta]] = \sum_{k=0}^nf\circ\tau^{-1}\left((\beta-\alpha)\,\frac{k}{n}+\alpha\right)\, \widetilde{p}_{n,k}(\tau(x);[\alpha,\beta]) , \quad \alpha\leqslant x \leqslant \beta,
$$
where $\tau(x)$ is any function that is continuously differentiable, such that $\tau (\alpha) = \alpha, \, \tau(\beta) = \beta$, and $\tau'(x) > 0$ for $x \in [\alpha, \beta]$.

\begin{prop}
Let $f$ be a continuous function on $[\alpha,\beta]$ and $\tau(x)$ is any function that is continuously differentiable, such that $\tau (\alpha) = \alpha, \, \tau(\beta) = \beta$, and $\tau'(x) > 0$ for $x \in [\alpha, \beta]$. Then,
$$
\lim_{n\to \infty} \widetilde{C}_n^{\tau}[f(x),[\alpha,\beta]] \, = \, f(x).
$$
\end{prop}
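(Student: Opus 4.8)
The plan is to reduce the statement to the previous proposition on $[0,1]$ by conjugating everything with the affine change of variable. Introduce the linear bijection $\ell \colon [0,1] \to [\alpha,\beta]$, $\ell(s) = (\beta-\alpha)\,s + \alpha$, with inverse $\ell^{-1}(x) = (x-\alpha)/(\beta-\alpha)$, and set $F(s) = f(\ell(s))$ together with $\sigma = \ell^{-1}\circ\tau\circ\ell$. First I would check that the pair $(F,\sigma)$ satisfies the hypotheses of the previous proposition on $[0,1]$: the function $F$ is continuous as a composition of continuous maps; $\sigma$ is continuously differentiable; the endpoints are fixed, since $\sigma(0) = \ell^{-1}(\tau(\alpha)) = \ell^{-1}(\alpha) = 0$ and $\sigma(1) = \ell^{-1}(\tau(\beta)) = 1$; and by the chain rule $\sigma'(s) = \tau'(\ell(s)) > 0$ on $[0,1]$.

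The heart of the argument is the operator identity
$$
\widetilde{C}_n^{\tau}[f(x),[\alpha,\beta]] = C_n^{\sigma}F(s), \qquad s = \ell^{-1}(x).
$$
To establish it I would expand the right-hand side using the formula for $C_n^{\sigma}$ obtained in the proof of the previous proposition, namely $C_n^{\sigma}F(s) = \sum_{k=0}^n F(\sigma^{-1}(k/n))\,p_{n,k}(\sigma(s))$, and match it term by term with the definition of $\widetilde{C}_n^{\tau}$. For the coefficients, since $\sigma^{-1} = \ell^{-1}\circ\tau^{-1}\circ\ell$ one computes $F(\sigma^{-1}(k/n)) = f\bigl(\tau^{-1}((\beta-\alpha)\tfrac{k}{n}+\alpha)\bigr)$, which is exactly the sample value appearing in $\widetilde{C}_n^{\tau}$. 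For the basis functions, the key algebraic fact is $p_{n,k}(\sigma(s)) = p_{n,k}\!\left(\frac{\tau(x)-\alpha}{\beta-\alpha}\right) = \widetilde{p}_{n,k}(\tau(x);[\alpha,\beta])$, which follows directly from the relation $\widetilde{p}_{n,k}(\,\cdot\,;[\alpha,\beta]) = p_{n,k}(\ell^{-1}(\,\cdot\,))$ recorded at the start of this section.

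With the identity in hand the conclusion is immediate: by the previous proposition $C_n^{\sigma}F \to F$ uniformly on $[0,1]$, and since $F(s) = f(\ell(s)) = f(x)$ while $\ell$ is a homeomorphism, uniform convergence transports back to $[\alpha,\beta]$, yielding $\widetilde{C}_n^{\tau}[f(x),[\alpha,\beta]] \to f(x)$ uniformly on $[\alpha,\beta]$.

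I do not foresee a serious obstacle; the only place demanding care is the bookkeeping in the composition $\sigma = \ell^{-1}\circ\tau\circ\ell$ — making sure the endpoint conditions and the positivity of the derivative are genuinely inherited, and that $\sigma$ and $\sigma^{-1}$ compose with $\ell$ so that the sample points and the basis polynomials line up exactly. Everything else is a direct translation of the $[0,1]$ result through the affine map.
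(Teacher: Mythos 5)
Your proof is correct. The paper takes a slightly different, though equally short, route: it substitutes $u=\tau(x)$ directly and recognizes $\widetilde{C}_n^{\tau}[f(x),[\alpha,\beta]]$ as the shifted Bernstein operator $\widetilde{B}_n[(f\circ\tau^{-1})(u),[\alpha,\beta]]$ applied to the continuous function $f\circ\tau^{-1}$, then invokes the uniform convergence of $\widetilde{B}_n$ on $[\alpha,\beta]$. You instead conjugate by the affine map $\ell$ to produce $\sigma=\ell^{-1}\circ\tau\circ\ell$ and reduce to the immediately preceding proposition for $C_n^{\sigma}$ on $[0,1]$. The two reductions absorb different pieces of the definition: the paper's substitution absorbs $\tau$ and leans on the earlier (briefly asserted) convergence of the shifted operator $\widetilde{B}_n$, whereas yours absorbs the affine shift and leans on the just-proved $[0,1]$ result, which makes your argument somewhat more modular and self-contained. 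Your bookkeeping is accurate throughout: the endpoint conditions $\sigma(0)=0$, $\sigma(1)=1$, the cancellation of the two factors of $\beta-\alpha$ giving $\sigma'(s)=\tau'(\ell(s))>0$, the identification $F(\sigma^{-1}(k/n))=f\bigl(\tau^{-1}((\beta-\alpha)\tfrac{k}{n}+\alpha)\bigr)$ of the sample values, and the identity $p_{n,k}(\sigma(s))=\widetilde{p}_{n,k}(\tau(x);[\alpha,\beta])$ for the basis polynomials all check out, and the transfer of uniform convergence through the homeomorphism $\ell$ is immediate.
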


\begin{proof}
Set $u = \tau(x)$. We compute
\begin{align*}
 \widetilde{C}_n^{\tau}[f(x),[\alpha,\beta]] &= \sum_{k=0}^n f\left( \tau^{-1}\left((\beta-\alpha)\frac{k}{n}+\alpha\right)\right)\,\widetilde{p}_{n,k}(\tau(x);[\alpha,\beta]) \\
&= \sum_{k=0}^n  f\left( \tau^{-1}\left((\beta-\alpha)\frac{k}{n}+\alpha\right)\right)\,\widetilde{p}_{n,k}(u;[\alpha,\beta])\\
&= \widetilde{B}_n[f\left( \tau^{-1}\left(u\right)\right),[\alpha,\beta]].
\end{align*}
Since $\widetilde{B}_n[f\left( \tau^{-1}\left(u\right)\right),[\alpha,\beta]]\rightarrow f\left( \tau^{-1}\left(u\right)\right)=f(x)$ as $n\rightarrow +\infty$, the result follows from taking the limit on both sides of  $\widetilde{C}_n^{\tau}[f(x),[\alpha,\beta]]= \widetilde{B}_n[f\left( x\right),[\alpha,\beta]]$.
\end{proof}

\bigskip 

\section{Bivariate Bernstein-Stancu operators}\label{sec_BS}

In 1963,  Stancu (\cite{Stan63}) studied a method for deducing polynomials of Bernstein type of two variables. This method is based on obtaining an operator in two variables from the successive application of Bernstein operators of one variable.

Let $\phi_1\equiv\phi_1(x)$ and $\phi_2\equiv \phi_2(x)$ be two continuous functions such that $\phi_1 < \phi_2$ on $[0,1]$. Let $\Omega\subseteq \mathbb{R}^2$ be the domain bounded by the curves $y=\phi_1(x)$, $y=\phi_2(x)$, and the straight lines $x=0$, $x=1$. For every function $f(x,y)$ defined on $\Omega$, define the function
\begin{equation}\label{big-F}
F(x,t)\,=\,f(x,(\phi_2(x)-\phi_1(x))\,t+\phi_1(x)),
\end{equation}
where $0\leqslant t \leqslant 1$.

Notice the change of variable 
\begin{equation}\label{varchange}
y=(\phi_2(x)-\phi_1(x))\,t+\phi_1(x).
\end{equation}

The \textit{$n$-th Bernstein-Stancu operator} is defined as \begin{equation}\label{Bernstein-Stancu}
    \mathscr{B}_n [f(x,y),\Omega]=\sum_{k=0}^n\sum_{j=0}^{n_k}F\left(\frac{k}{n}, \frac{j}{n_k}\right)\,p_{n,k}(x)\,p_{n_k,j}(t),
\end{equation}
where each $n_k$ is a non negative integer associated with the $k$-th node $x_k=k/n$, and $t$ is given by \eqref{varchange}. Writing \eqref{Bernstein-Stancu} explicitly, we have
$$
\mathscr{B}_n [f(x,y),\Omega]= \sum_{k=0}^n\sum_{j=0}^{n_k}F\left(\frac{k}{n}, \frac{j}{n_k}\right)\,p_{n,k}(x)\,p_{n_k,j}\left(\frac{y-\phi_1(x)}{\phi_2(x)-\phi_1(x)}\right), \quad (x,y)\in \Omega.
$$
If we denote by $B_{n}^{(t)}$ the univariate Bernstein operator acting on the variable $t$, then the Bernstein-Stancu operator can be written as
$$
\mathscr{B}_n [f(x,y),\Omega]=\sum_{k=0}^{n}\left[B_{n_k}^{(t)}F\left(\frac{k}{n},t \right)\right]\,p_{n,k}(x).
$$
We have the following representation of $\mathscr{B}_n$ in terms of a matrix determinant.

\begin{prop}
Let $f(x,y)$ be a function defined on the domain $\Omega$, and let $F$ be the function defined on \eqref{big-F}. Denote by $B_{n}^{(t)}$ the univariate Bernstein operator acting on the variable $t$. Then, the $n$-th Bernstein-Stancu operator is given by the determinant
$$
\mathscr{B}_n[f(x,y),\Omega] = -\left|\begin{array}{cccc|c}
1 &   &        & \bigcirc   & B_{n_0}^{(t)}F(0,t)     \\
  & 1 &        &   & B_{n_1}^{(t)}F(1/n,t)   \\
  &   & \ddots &   & \vdots   \\
\bigcirc  &   &        & 1 & B_{n_n}^{(t)}F(1,t)     \\
\hline
p_{n,0}(x) & p_{n,1}(x) & \ldots & p_{n,n}(x) & 0
\end{array} \right|.
$$
\end{prop}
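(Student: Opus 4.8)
The plan is to evaluate the $(n+2)\times(n+2)$ determinant directly by a cofactor (Laplace) expansion along its last column, which is the only column carrying the operator entries $B_{n_k}^{(t)}F(k/n,t)$. The guiding observation is that the upper-left identity block together with the single populated bottom row of Bernstein basic polynomials $p_{n,k}(x)$ is exactly the structure needed to reconstruct the sum $\sum_{k=0}^n [B_{n_k}^{(t)}F(k/n,t)]\,p_{n,k}(x)$ once the minors are computed. So the whole argument reduces to a controlled two-step determinant expansion, and the substantive work lies in keeping track of signs.

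First I would label the matrix $M$ and index its rows and columns from $1$ to $n+2$, so that for $1\leqslant i\leqslant n+1$ the $i$-th row is the $i$-th standard basis vector in its first $n+1$ entries and carries $B_{n_{i-1}}^{(t)}F((i-1)/n,t)$ in the last entry, while the bottom row is $(p_{n,0}(x),\dots,p_{n,n}(x),0)$. Expanding along the last column, the bottom-right entry $0$ kills one term, and I obtain
$$
\det M = \sum_{i=1}^{n+1} (-1)^{i+(n+2)}\, B_{n_{i-1}}^{(t)}F\!\left(\tfrac{i-1}{n},t\right)\, \det(M_{i,n+2}),
$$
where $M_{i,n+2}$ denotes the minor obtained by deleting row $i$ and the last column.

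Next I would evaluate each minor. After deleting the last column, the remaining $(n+1)\times(n+1)$ matrix consists of the identity rows with row $i$ removed, together with the bottom $p$-row; in that matrix the column of index $i$ has a single nonzero entry, namely $p_{n,i-1}(x)$ in the bottom row, since every surviving identity row places its $1$ in a column other than $i$. A second cofactor expansion along that column leaves an $n\times n$ identity minor, so $\det(M_{i,n+2}) = (-1)^{(n+1)+i}\, p_{n,i-1}(x)$. Substituting this back, reindexing by $k=i-1$, and multiplying through by $-1$ recovers exactly the representation $\sum_{k=0}^{n}[B_{n_k}^{(t)}F(k/n,t)]\,p_{n,k}(x)=\mathscr{B}_n[f(x,y),\Omega]$ recalled immediately before the statement.

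The one delicate point, and the step I would verify most carefully, is the sign accounting: the two factors combine as $(-1)^{i+(n+2)}(-1)^{(n+1)+i} = (-1)^{2i+2n+3} = -1$, independently of both $i$ and $n$. It is precisely this uniform cancellation that produces the global minus sign in front of the determinant in the statement, so a single misplaced exponent there — rather than in the routine identity and permutation minors — is the likeliest source of error.
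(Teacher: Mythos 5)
Your argument is correct and complete: the double cofactor expansion (first along the last column, then along column $i$ of each minor, with the two signs combining to $(-1)^{2i+2n+3}=-1$ uniformly in $i$ and $n$) is exactly the computation that establishes the identity. The paper states this proposition without proof, treating it as a routine consequence of the representation $\mathscr{B}_n[f(x,y),\Omega]=\sum_{k=0}^{n}\bigl[B_{n_k}^{(t)}F(k/n,t)\bigr]p_{n,k}(x)$ given just before it, so your write-up simply supplies the omitted verification, and it does so correctly.
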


\begin{remark}
Observe that the step size of the partition of the $x$ axis is $1/n$ and, for a fixed node $x_k=k/n$, the step size of the partition of the $t$ axis is $1/n_k$. Therefore, the step size of the partition of the $y$ axis is $1/m_k$, where
$$
m_k= \frac{n_k}{\phi_2\left(\frac{k}{n}\right)-\phi_1\left(\frac{k}{n}\right)},
$$
and, thus,
$$
F\left(\frac{k}{n}, \frac{j}{n_k}\right)= f\left(\frac{k}{n}, \frac{j}{m_k}+ \phi_1\left(\frac{k}{n}\right)\right).
$$
\end{remark}

\bigskip 

We point out that, in general, $\mathscr{B}_n[f(x,y), \Omega]$ is not a polynomial. However, it is possible to obtain polynomials by an appropriate choice of $\phi_1$, $\phi_2$, and $n_k$. For instance:

\noindent (1) \textit{The Bernstein-Stancu operator on the unit square $\mathbf{Q}=[0,1]\times [0,1]$} (see for instance \cite{L97}, \cite{Stan63}) are obtained by letting $\phi_1(x)=0$ and $\phi_2(x)=1$. Hence, for a function $f$ defined on $\mathbf{Q}$, we get 
$$
F\left(\frac{k}{n},\frac{j}{n_k}\right)=f\left(\frac{k}{n},\frac{j}{n_k}\right),
$$ 
and
\begin{equation*}
\mathscr{B}_n[f(x,y),\mathbf{Q}]=\sum_{k=0}^n\sum_{j=0}^{n_k}f\left(\frac{k}{n},\frac{j}{n_k}\right)\,p_{n,k}(x)p_{n_k,j}(y).
\end{equation*}
Note that when $n_k$ is independent of $k$ (e.g., $n_k=m$ for some positive integer $m$), $\mathscr{B}_n$ is the tensor product of univariate Bernstein operators on $\mathbf{Q}$.

\medskip

\noindent (2) \textit{The Bernstein-Stancu operators can be defined on the simplex} $\mathbf{T}^2 = \{(x,y)\in \mathbb{R}: x, y \geqslant0, 1-x-y \geqslant0\}$ (see for instance \cite{BeScXu92} and \cite{Stan63}). In this case, we set $\phi_1(x)=0$, $\phi_2(x)=1-x$, and $n_k=n-k$, $0\leqslant k \leqslant n$. In this way, $m_k=n$ and, since 
$$
F\left(\frac{k}{n},\frac{j}{n-k}\right)=f\left(\frac{k}{n},\frac{j}{n}\right),
$$
we have
\begin{align*}
\mathscr{B}_n[f(x,y),\mathbf{T}^2]&= \sum_{k=0}^n\sum_{j=0}^{n-k}f\left(\frac{k}{n},\frac{j}{n}\right)\, p_{n,k}(x)\,p_{n-k,j}\left(\frac{y}{1-x}\right)\\
&=\sum_{k=0}^n\sum_{j=0}^{n-k}f\left(\frac{k}{n},\frac{j}{n}\right)\,\binom{n}{k\ \ j}\,x^k\,y^j\,(1-x-y)^{n-k-j} , \quad (x,y)\in\mathbf{T}^2,
\end{align*}
where 
$$
\binom{n}{k\ \ j} = \frac{n!}{k!\,j!\,(n-k-j)!},\quad 0\leqslant k+j\leqslant n.
$$

In \cite{Stan63}, Stancu proved the following convergence result on $\mathbf{T}^2$.

\begin{theorem}[\cite{Stan63}]
Let $f$ be a continuous function on $\mathbf{T}^2$. Then $\mathscr{B}_n[f(x,y),\mathbf{T}^2]$ converges uniformly to $f(x,y)$ as $n\to +\infty$.
\end{theorem}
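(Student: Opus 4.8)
The plan is to prove uniform convergence by the several-variable Korovkin theorem, reducing the problem to checking the action of $\mathscr{B}_n$ on the finite test set $\{1,\,x,\,y,\,x^2+y^2\}$. First I would record that $\mathscr{B}_n[\,\cdot\,,\mathbf{T}^2]$ is a positive linear operator on $C(\mathbf{T}^2)$: linearity is immediate from \eqref{Bernstein-Stancu}, and positivity holds because each basis function $\binom{n}{k\ \ j}x^k y^j (1-x-y)^{n-k-j}$ is nonnegative on $\mathbf{T}^2$. By the bivariate Korovkin theorem it then suffices to show $\mathscr{B}_n e_i \to e_i$ uniformly on $\mathbf{T}^2$ for $e_0=1$, $e_1(x,y)=x$, $e_2(x,y)=y$, and $e_3(x,y)=x^2+y^2$.

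The key simplification I would exploit is that the marginals of the multinomial basis are univariate Bernstein bases. Writing $z=1-x-y$ and summing the inner index with the binomial theorem,
\begin{equation*}
\sum_{j=0}^{n-k}\binom{n}{k\ \ j}x^k y^j z^{n-k-j}=\binom{n}{k}x^k(y+z)^{n-k}=p_{n,k}(x),
\end{equation*}
and symmetrically, summing over $k$ gives $p_{n,j}(y)$. Consequently every moment of $\mathscr{B}_n$ in a single variable collapses to a univariate Bernstein moment. Using the univariate identities already recorded in Section \ref{sec1} --- namely $B_n 1=1$, $B_n x=x$, and (taking $\tau$ to be the identity in $C_n^\tau\tau^2=(1-\tfrac1n)\tau^2+\tfrac1n\tau$) $B_n x^2=(1-\tfrac1n)x^2+\tfrac1n x$ --- I obtain $\mathscr{B}_n 1=1$, $\mathscr{B}_n x=x$, $\mathscr{B}_n y=y$, and
\begin{equation*}
\mathscr{B}_n x^2=\left(1-\tfrac1n\right)x^2+\tfrac1n x,\qquad \mathscr{B}_n y^2=\left(1-\tfrac1n\right)y^2+\tfrac1n y.
\end{equation*}
Hence $\mathscr{B}_n(x^2+y^2)-(x^2+y^2)=\tfrac1n\bigl(x(1-x)+y(1-y)\bigr)$, which tends to $0$ uniformly on the compact set $\mathbf{T}^2$.

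With the four test limits in hand, Korovkin's theorem yields $\mathscr{B}_n[f,\mathbf{T}^2]\to f$ uniformly. I expect the main obstacle to be organizational rather than deep: one must either invoke the multivariate Korovkin theorem directly or, to keep the argument self-contained, replace it by the standard explicit estimate. In the latter route I would write, using $\mathscr{B}_n 1=1$, $\mathscr{B}_n f(x,y)-f(x,y)=\sum_{k,j}\bigl[f(\tfrac{k}{n},\tfrac{j}{n})-f(x,y)\bigr]\binom{n}{k\ \ j}x^k y^j z^{n-k-j}$, then split the nodes into those close to and those far from $(x,y)$, bounding the near part by the modulus of continuity and the far part by a Chebyshev argument driven by the second-moment estimate $\sum_{k,j}\bigl[(\tfrac{k}{n}-x)^2+(\tfrac{j}{n}-y)^2\bigr]\binom{n}{k\ \ j}x^k y^j z^{n-k-j}=\tfrac1n\bigl(x(1-x)+y(1-y)\bigr)$ obtained above. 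This self-contained version has the bonus of delivering the quantitative rate $\omega_f(n^{-1/2})$.
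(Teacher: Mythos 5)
Your proposal is correct, and it is worth noting that the paper never proves this statement directly: it quotes it from Stancu and instead establishes the more general Theorem on an arbitrary bounded domain $\Omega$ (via Lemma \ref{monomials} and the modulus-of-continuity estimate), recovering the simplex case as a specialization. Your route is the same in spirit --- verify the operator on the Korovkin test set, then conclude --- and indeed the paper itself remarks after Lemma \ref{monomials} that convergence follows from Korovkin's theorem, before giving the quantitative proof. What your specialization buys is exactness and brevity: because the multinomial basis has univariate Bernstein marginals, you get the identities $\mathscr{B}_n x^2=(1-\tfrac1n)x^2+\tfrac1n x$ and $\mathscr{B}_n y^2=(1-\tfrac1n)y^2+\tfrac1n y$ exactly, whereas the paper's Lemma \ref{monomials}, working with general $\phi_1,\phi_2$, only obtains $\widetilde{\mathscr{B}}_n[y,\Omega]\to y$ and $\widetilde{\mathscr{B}}_n[y^2,\Omega]\to y^2$ as limits of univariate Bernstein operators applied to $\phi_1,\phi_2$. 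What the paper's approach buys is generality (any domain bounded by two continuous graphs, and arbitrary admissible $n_k$) plus the explicit rate in terms of the bivariate modulus of continuity, which your ``self-contained'' alternative --- splitting nodes by distance and using the second-moment bound $\tfrac1n\bigl(x(1-x)+y(1-y)\bigr)$ --- would also deliver; that alternative is essentially the argument the paper runs for Theorem \ref{approx_Stancu}, with Jensen's inequality in place of the Chebyshev split. All of your computations check out, including the marginal collapse $\sum_{j}\binom{n}{k\ \ j}x^k y^j z^{n-k-j}=p_{n,k}(x)$ and the resulting second-moment identity.
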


Stancu only gave a detailed proof of the approximation properties of $\mathscr{B}_n$ on triangles. In Section \ref{sec_ShiBS} below, we consider a slightly general operator and prove the uniform convergence on any bounded domain $\Omega$, and we recover Stancu's result when $\Omega= \mathbf{T}^2$.

\bigskip

\section{Bernstein-type operator under a domain transformation }

One way to extend the Bernstein operator on the unit square $\mathbf{Q}$ to another bounded domain $\Omega \in \mathbb{R}^2$ is through an appropriate transformation or change of variables. In this section, we study several cases.

\medskip

\noindent (1) Let $\widehat{\mathbf{Q}}=[-1,1]\times[-1,1]$. The operator defined as
\begin{align*}
\widehat{\mathscr{B}}_n[f(x,y), \widehat{\mathbf{Q}}]=\sum_{k=0}^n\sum_{j=0}^{n_k}f\left(\frac{2k-n}{n},\frac{2j-n_k}{n_k}\right)\,p_{n,k}\left(\frac{x+1}{2}\right)p_{n_k,j}\left(\frac{y+1}{2}\right),
\end{align*}
is a Bernstein operator on $\widehat{\mathbf{Q}}$. Indeed, for every function $f$ defined on $\widehat{\mathbf{Q}}$, we define the function $F: \mathbf{Q}\rightarrow \mathbb{R}$ as
$$
F(u,v)=f(2\,u-1,2\,v-1), \quad (u,v)\in\mathbf{Q}.
$$
Then, using the transformation  $x=2u-1$ and $y=2v-1$ which maps $\mathbf{Q}$ into $\widehat{\mathbf{Q}}$, we get
\begin{align*}
\widehat{\mathscr{B}}_n[f(x,y), \widehat{\mathbf{Q}}]=&\sum_{k=0}^n\sum_{j=0}^{n_k}F\left(\frac{k}{n},\frac{j}{n_k}\right)\,p_{n,k}\left(u\right)p_{n_k,j}\left(v\right)\\
=& \mathscr{B}_n[F(u,v), \mathbf{Q}], \quad (x,y)\in \widehat{\mathbf{Q}}.
\end{align*}

\medskip

\noindent (2) An alternative way to obtain the Bernstein-Stancu operator on the simplex $\mathbf{T}^2$ is by considering the Duffy transformation
\begin{equation*}
x=u,\ y=v(1-u), \quad (u,v)\in \mathbf{Q},
\end{equation*}
which maps $\mathbf{Q}$ into $\mathbf{T}^2$. Let $f$ be a function defined on $\mathbf{T}^2$. We can define the function $F:\mathbf{Q}\rightarrow \mathbb{R}$ as
$$
F(u,v)\,=\,f(u, v\,(1-u)), \quad (u,v)\in \mathbf{Q}.
$$
Then, the operator
\begin{equation*}
\widehat{\mathscr{B}}_n[f(x,y),\mathbf{T}^2]=\sum_{k=0}^n\sum_{j=0}^{n_k}f\left(\frac{k}{n},\frac{j}{n_k}\left(1- \frac{k}{n} \right)\right)\,p_{n,k}(x)\,p_{n_k,j}\left(\frac{y}{1-x}\right),
\end{equation*}
is a Bernstein-type operator on the simplex since, using the Duffy transformation, we get
\begin{align*}
    \widehat{\mathscr{B}}_n[f(x,y),\mathbf{T}^2]= \mathscr{B}_n[F(u,v), \mathbf{Q}], \quad (x,y)\in\mathbf{T}^2.
\end{align*}
Observe that $\widehat{\mathscr{B}}_n[f(x,y),\mathbf{T}^2]$ is not a polynomial unless $n-k-n_k\geqslant 0$. We recover the usual Bernstein-Stancu operator on the simplex by setting $n_k=n-k$.

\medskip

\noindent (3) Consider the unit ball in $\mathbb{R}^2$:
$$
\mathbf{B}^2 \,=\, \{(x,y)\in \mathbb{R}^2:\ x^2+y^2 \leqslant 1 \},
$$
and the transformation $x=2u-1$, $y=(2v-1)\,\sqrt{1-(2u-1)^2}$ which maps the square $\mathbf{Q}$ into $\mathbf{B}^2$. For every function $f$ defined on $\mathbf{B}^2$, we can define the function $F:\mathbf{Q}\rightarrow \mathbb{R}^2$ as
$$
F(u,v)\,=\, f(2u-1, (2v-1)\,\sqrt{1-(2u-1)^2}), \quad (u,v)\in \mathbf{Q}.
$$
The operator
\begin{align*}
&\widehat{\mathscr{B}}_{n}[f(x,y),\mathbf{B}^2]\\
&=\sum_{k=0}^n\sum_{j=0}^{n_k}f\left(\frac{2k-n}{n},\frac{2j-n_k}{n_k}\frac{2\sqrt{k\,(n-k)}}{n}\right)\,p_{n,k}\left(\frac{x+1}{2}\right)p_{n_k,j}\left(\frac{\frac{y}{\sqrt{1-x^2}}+1}{2}\right),
\end{align*}
is a Bernstein operator on the unit ball since 
$$
\widehat{\mathscr{B}}_{n}[f(x,y),\mathbf{B}^2]\,=\,\mathscr{B}_n[F(u,v),\mathbf{Q}], \quad (x,y)\in \mathbf{B}^2.
$$

Observe that, in this case, 
\begin{align*}
p_{n,k}\left(\dfrac{x+1}{2}\right)\,&p_{n_k,j}\left(\dfrac{\frac{y}{\sqrt{1-x^2}}+1}{2}\right)\\
&=\binom{n}{k}\binom{n_k}{j} \frac{(1+x)^k(1-x)^{n-k}\left(\sqrt{1-x^2}+y\right)^j  \left(\sqrt{1-x^2}-y\right)^{n_k-j}}{2^{n+n_k}\sqrt{1-x^2}^{n_k} }.
\end{align*}
In contrast with the previous two cases, there is no obvious choice of $n_k$ such that $\widehat{\mathscr{B}}_{n}[f(x,y),\mathbf{B}^2]$ is a polynomial. Nevertheless, notice that for $y=0$, we have
\begin{align*}
p_{n,k}\left(\dfrac{x+1}{2}\right)\,p_{n_k,j}\left(\dfrac{1}{2}\right)=\frac{1}{2^{n+n_k}}\binom{n}{k}\binom{n_k}{j} (1+x)^k(1-x)^{n-k},
\end{align*}
and for $x=0$ we have
\begin{align*}
p_{n,k}\left(\dfrac{1}{2}\right)\,p_{n_k,j}\left(\dfrac{y+1}{2}\right)=\frac{1}{2^{n+n_k} }\binom{n}{k}\binom{n_k}{j} \left(1+y\right)^j  \left(1-y\right)^{n_k-j}.
\end{align*}
Therefore, $\widehat{\mathscr{B}}_{n}[f(x,y),\mathbf{B}^2]$ is a polynomial on the $x$ and $y$ axes for any choice of $n_k$.

In Figure \ref{meshcircle}, the representation of the mesh in this case for $n=n_k=20$ is given.

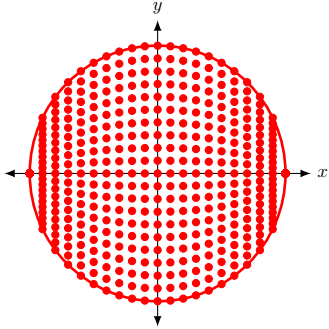
\begin{figure}[h!]

    \centering
\begin{tikzpicture}[scale=1.7]
\draw[red,line width=1pt] (1,0) arc(0:360:1);
\draw[latex-latex] (0,-1.2)--(0,1.2) node[scale=0.7,above]{$y$};
\draw[latex-latex] (-1.2,0)--(1.2,0)node[scale=0.7,right]{$x$};
\foreach \n in {20}
{
\pgfmathsetmacro{\nk}{\n}
\foreach \k in {0,1,2,...,\nk}
{
\foreach \j in {0,1,2,...,\n}
{
\draw[red,fill=red] ({(2*\k-\n)/\n},{2*sqrt(\k*(\n-\k))*(\n-2*\j)/(\n)^2}) circle(0.8pt); 
}
}
}
\end{tikzpicture}
    \caption{Mesh corresponding to case (3) for $n=20$ and $n_k=n=20$ for $0\leqslant k \leqslant n$.} 
    \label{meshcircle}
\end{figure}

\noindent (4) Let 
\begin{eqnarray*}
B_1=\{(x,y)\in \mathbf{B}^2:x\geqslant 0,y\geqslant 0\}, & B_2=\{(x,y)\in \mathbf{B}^2:x\leqslant 0,y\geqslant 0\},\\
B_3=\{(x,y)\in \mathbf{B}^2:x\leqslant 0,y\leqslant 0\}, & B_4=\{(x,y)\in \mathbf{B}^2:x\geqslant 0,y\leqslant 0\},
\end{eqnarray*}
denote the four quadrants of $\mathbf{B}^2$, and consider the transformation 
$$
u=x^2, \quad  v=\dfrac{y^2}{1-x^2}, \quad (x,y)\in \mathbf{B}^2,
$$
which maps each quadrant to $\mathbf{Q}$. The corresponding Bernstein operators on the quadrants are: 
\begin{eqnarray*}
\widehat{\mathscr{B}}_{n}[f(x,y),B_1]&=&\sum_{k=0}^n\sum_{j=0}^{n_k}f\left(\sqrt{\frac{k}{n}},\sqrt{\frac{j(n-k)}{n_k\,n}}\right)\,p_{n,k}(x^2)\,p_{n_k,j}\left(\frac{y^2}{1-x^2}\right), \\
\widehat{\mathscr{B}}_{n}[f(x,y),B_2]&=&\sum_{k=0}^n\sum_{j=0}^{n_k}f\left(-\sqrt{\frac{k}{n}},\sqrt{\frac{j(n-k)}{n_k\,n}}\right)\,p_{n,k}(x^2)\,p_{n_k,j}\left(\frac{y^2}{1-x^2}\right),\\
\widehat{\mathscr{B}}_{n}[f(x,y),B_3]&=&\sum_{k=0}^n\sum_{j=0}^{n_k}f\left(-\sqrt{\frac{k}{n}},-\sqrt{\frac{j(n-k)}{n_k\,n}}\right)\,p_{n,k}(x^2)\,p_{n_k,j}\left(\frac{y^2}{1-x^2}\right),\\
\widehat{\mathscr{B}}_{n}[f(x,y),B_4]&=&\sum_{k=0}^n\sum_{j=0}^{n_k}f\left(\sqrt{\frac{k}{n}},-\sqrt{\frac{j(n-k)}{n_k\,n}}\right)\,p_{n,k}(x^2)\,p_{n_k,j}\left(\frac{y^2}{1-x^2}\right).
\end{eqnarray*}
Indeed, for every function $f$ defined on $\mathbf{B}^2$, we can define the functions on $\mathbf{Q}$:
\begin{align*}
    &F_1(u,v)  =  f(\sqrt{u}, \sqrt{v\,(1-u)}), & F_2(u,v)= f(-\sqrt{u},\sqrt{v\,(1-u)}),\\
    &F_3(u,v) = f(-\sqrt{u}, -\sqrt{v\,(1-u)}), & F_4(u,v) = f(\sqrt{u},-\sqrt{v\,(1-u)}).
\end{align*}
Then,
\begin{eqnarray*}
&\widehat{\mathscr{B}}_{n}[f(x,y),B_1]= \mathscr{B}_n[F_1(u,v),\mathbf{Q}], &\widehat{\mathscr{B}}_{n}[f(x,y),B_2]=\mathscr{B}_n[F_2(u,v),\mathbf{Q}],\\
&\widehat{\mathscr{B}}_{n}[f(x,y),B_3]=\mathscr{B}_n[F_3(u,v),\mathbf{Q}], &\widehat{\mathscr{B}}_{n}[f(x,y),B_4]=\mathscr{B}_n[F_4(u,v),\mathbf{Q}].
\end{eqnarray*}
If we choose $n_k=n-k$, we have that $\widehat{\mathscr{B}}_{n}[f(x,y),B_i]$, $i=1,2,3,4$, are polynomials of degree $2n$ since
\begin{equation*}
p_{n,k}(x^2)\,p_{n-k,j}\left(\frac{y^2}{1-x^2}\right)\,=\,\binom{n}{k\ \ j}x^{2k}y^{2j}(1-x^2-y^2)^{n-k-j}.
\end{equation*}
In this case, observe that for $k=0$, the mesh corresponding to $B_1$ and $B_2$, and similarly to $B_3$ and $B_4$, coincide on the $y$ axis (see Figure \ref{circular_mesh}). Moreover, for $j=0$, the mesh corresponding to adjacent quadrants coincide on the $x$ axis. Therefore, we can define a \textit{piece-wise} Bernstein operator on $\mathbf{B}^2$ as follows:
\begin{equation}\label{piecewise}
\overline{\mathscr{B}}_n[f(x,y), \mathbf{B}^2]\,=\,\left\{ \begin{array}{cc}
\widehat{\mathscr{B}}_{n}[f(x,y),B_1],     &  (x,y)\in B_1, \\
\widehat{\mathscr{B}}_{n}[f(x,y),B_2],     &  (x,y)\in B_2, \\
\widehat{\mathscr{B}}_{n}[f(x,y),B_3],     &  (x,y)\in B_3, \\
\widehat{\mathscr{B}}_{n}[f(x,y),B_4],     &  (x,y)\in B_4.
\end{array}
\right.
\end{equation}

\begin{figure}[h]
    \centering
\begin{tikzpicture}[scale=1.7]
\draw[red,line width=1pt] (1,0) arc(0:360:1);
\draw[latex-latex] (0,-1.2)--(0,1.2) node[scale=0.7,above]{$y$};
\draw[latex-latex] (-1.2,0)--(1.2,0)node[scale=0.7,right]{$x$};
\foreach \n in {10}
{
\foreach \k in {0,1,2,...,\n}
{
\pgfmathsetmacro{\nk}{\n-\k}
\foreach \j in {0,...,\nk}
{
\draw[cyan,fill=cyan] ({sqrt(\k/\n)},{sqrt(\j/\n)}) circle(0.8pt);
\draw[orange,fill=orange] ({sqrt(\k/\n)},{-sqrt(1-\k/\n-\j/\n)}) circle(0.8pt);
}
\foreach \j in {0,...,\k}
{
\draw[green,fill=green] ({-sqrt(1-\k/\n)},{sqrt(\j/\n)}) circle(0.8pt);
\draw[blue,fill=blue] ({-sqrt(1-\k/\n)},{-sqrt(\k/\n-\j/\n)}) circle(0.8pt);
}
}
}
\end{tikzpicture}
    \caption{Circular mesh after applying the transformation $(u,v)\mapsto (\sqrt{u}, \sqrt{v\,(1-u)}$ for $(u,v)\in \mathbf{Q}$ with $n=10$ and $n_k=n-k$, for $0\leqslant k \leqslant n$.} 
    \label{circular_mesh}
\end{figure}
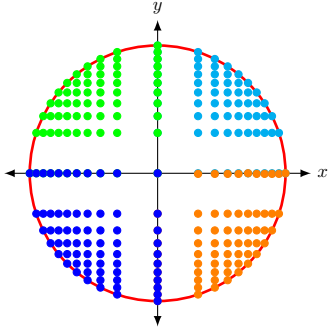

\begin{prop}\label{continuity-1}
For any function $f$ on $\mathbf{B}^2$,  $\overline{\mathscr{B}}_n[f(x,y), \mathbf{B}^2]$ is a continuous function on $\mathbf{B}^2$.
\end{prop}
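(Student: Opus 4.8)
The plan is to paste the four polynomial pieces together using the gluing (pasting) lemma for closed sets. Since $\mathbf{B}^2 = B_1\cup B_2\cup B_3\cup B_4$ with each quadrant $B_i$ closed, and since I will argue that each piece $\widehat{\mathscr{B}}_n[f,B_i]$ is continuous on the whole of $B_i$, the only thing left to check is that adjacent pieces agree on the boundary segments they share, namely the portions of the coordinate axes lying inside the disk. Once that agreement is established, the gluing lemma for finitely many closed sets delivers continuity of $\overline{\mathscr{B}}_n[f,\mathbf{B}^2]$ on all of $\mathbf{B}^2$.

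First I would record that, with the choice $n_k=n-k$, the identity
$p_{n,k}(x^2)\,p_{n-k,j}\left(\frac{y^2}{1-x^2}\right)=\binom{n}{k\ \ j}x^{2k}y^{2j}(1-x^2-y^2)^{n-k-j}$
shows each $\widehat{\mathscr{B}}_n[f,B_i]$ to be a genuine polynomial in $(x,y)$, hence continuous on $\mathbb{R}^2$ and in particular on the closed quadrant $B_i$. This is the step that matters for the endpoints: it removes the apparent singularity of the factor $y^2/(1-x^2)$ at $(\pm1,0)$, so that each piece is continuous up to and including the full boundary of its quadrant. After this simplification the node in $B_1$ becomes $(\sqrt{k/n},\sqrt{j/n})$, and in $B_2,B_3,B_4$ it is the corresponding sign reflection, e.g.\ $(-\sqrt{k/n},\sqrt{j/n})$.

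The main step is the boundary matching. On the segment of the $y$-axis ($x=0$) shared by $B_1$ and $B_2$, the factor $x^{2k}$ annihilates every term with $k\geq1$, collapsing each double sum to $\sum_{j=0}^{n}f(0,\sqrt{j/n})\,p_{n,j}(y^2)$; here the surviving node abscissa $\pm\sqrt{0/n}$ equals $0$ for both pieces, so $\widehat{\mathscr{B}}_n[f,B_1]$ and $\widehat{\mathscr{B}}_n[f,B_2]$ coincide there. The same computation, again using $\pm\sqrt{0/n}=0$, matches $B_3$ and $B_4$ on the lower $y$-axis. Symmetrically, on the $x$-axis ($y=0$) the factor $y^{2j}$ kills every term with $j\geq1$, leaving $\sum_{k=0}^{n}f(\pm\sqrt{k/n},0)\,p_{n,k}(x^2)$ and matching the adjacent pairs $B_4,B_1$ and $B_2,B_3$; at the origin all four pieces reduce to $f(0,0)$.

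I do not expect a genuine obstacle here. The only point that requires care is the first step: it is the polynomial (multinomial) form that guarantees each piece is continuous even at $(\pm1,0)$ and $(0,\pm1)$, where the raw expression $y^2/(1-x^2)$ is undefined, and it is precisely this that reduces the whole question to the elementary node-matching computation on the axes carried out above.
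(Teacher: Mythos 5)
Your proof is correct and follows essentially the same route as the paper's: both reduce the question to checking that adjacent pieces agree on the coordinate axes, where the sums collapse to a single index ($k=0$ on the $y$-axis, $j=0$ on the $x$-axis) and the surviving nodes coincide. Your additional remarks --- invoking the pasting lemma for closed sets explicitly and noting that the multinomial form $\binom{n}{k\ \ j}x^{2k}y^{2j}(1-x^2-y^2)^{n-k-j}$ removes the apparent singularity of $y^2/(1-x^2)$ at $(\pm 1,0)$ --- only make explicit points the paper's proof leaves implicit.
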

\begin{proof}
Clearly, $\overline{\mathscr{B}}_n[f(x,y), \mathbf{B}^2]$ is continuous on the interior of each quadrant.

For $x=0$,
\begin{align*}
\displaystyle\left.\widehat{\mathscr{B}}_{n}[f(x,y),B_1]\right|_{x=0}=&\displaystyle\sum_{k=0}^n\sum_{j=0}^{n_k}f\left(\sqrt{\frac{k}{n}},\sqrt{\frac{j(n-k)}{n_k\,n}}\right)\,p_{n,k}(0)\,p_{n_k,j}\left(y^2\right), \\
=&\displaystyle\sum_{j=0}^{n_0}f\left(0,\sqrt{\frac{j}{n_0}}\right)\,p_{n_0,j}\left(y^2\right)\\
=&\displaystyle\left.\widehat{\mathscr{B}}_{n}[f(x,y),B_2]\right|_{x=0},
\end{align*}
and
\begin{align*}
\displaystyle\left.\widehat{\mathscr{B}}_{n}[f(x,y),B_3]\right|_{x=0}=&\displaystyle\sum_{k=0}^n\sum_{j=0}^{n_k}f\left(-\sqrt{\frac{k}{n}},-\sqrt{\frac{j(n-k)}{n_k\,n}}\right)\,p_{n,k}(0)\,p_{n_k,j}\left(y^2\right), \\
=&\displaystyle\sum_{j=0}^{n_0}f\left(0,-\sqrt{\frac{j}{n_0}}\right)\,p_{n_0,j}\left(y^2\right)\\
=&\displaystyle\left.\widehat{\mathscr{B}}_{n}[f(x,y),B_4]\right|_{x=0}.
\end{align*}

Similarly, for $y=0$
$$
\left.\widehat{\mathscr{B}}_{n}[f(x,y),B_1]\right|_{y=0}= \left.\widehat{\mathscr{B}}_{n}[f(x,y),B_4]\right|_{y=0},
$$
and
$$
\left.\widehat{\mathscr{B}}_{n}[f(x,y),B_2]\right|_{y=0}= \left.\widehat{\mathscr{B}}_{n}[f(x,y),B_3]\right|_{y=0}.
$$
Therefore, $\overline{\mathscr{B}}_n[f(x,y), \mathbf{B}^2]$ is continuous on the $x$ and $y$ axes.
\end{proof}

\bigskip

\section{Shifted Bernstein-Stancu operators}\label{sec_ShiBS}

Motivated by the examples of Bernstein operators on different domains introduced in the previous section, now we define the \textit{shifted $n$-th Bernstein-Stancu operator} and study its approximation properties.

Let $\phi_1$ and $\phi_2$ be two continuous functions, and let $I=[a,b]$ be an interval such that $\phi_1 < \phi_2$ on $I$. Let $\Omega\subset \mathbb{R}^2$ be the domain bounded by the curves $y=\phi_1(x)$, $y=\phi_2(x)$, and the straight lines $x=a$, $x=b$. Observe that for a fixed $x\in I$, the polynomials $\widetilde{p}_{n,k}(y;[\phi_1(x),\phi_2(x)])$, $n\geqslant 0$, $0\leqslant k \leqslant n$, constitute a univariate shifted Bernstein basis on the interval $[\phi_1(x),\phi_2(x)]$.

For every function $f(x,y)$ defined on $\Omega$, define the function
\begin{equation}\label{F-tilde}
\widetilde{F}(u,v;\Omega)\,=\,f\left((b-a)\,u+a,(\widetilde{\phi}_2(u)-\widetilde{\phi}_1(u))\,v+\widetilde{\phi}_1(u)\right),
\end{equation}
where 
$$
\widetilde{\phi}_i(u) = \phi_i((b-a)\,u+a), \quad i=1,2,
$$
$0\leqslant u \leqslant 1$, and $0\leqslant v \leqslant 1$. 

The shifted $n$-th Bernstein-Stancu operator is defined as
$$
\widetilde{\mathscr{B}}_n[f(x,y),\Omega]=\sum_{k=0}^n\sum_{j=0}^{n_k}\widetilde{F}\left(\frac{k}{n},\frac{j}{n_k};\Omega \right)\,\widetilde{p}_{n,k}(x;I)\,\widetilde{p}_{n_k,j}(y;[\phi_1,\phi_2]), \quad (x,y)\in \Omega,
$$
where $n_k=n-k$ or $n=k$ or all $0\leqslant k \leqslant n$. Written in terms of the univariate Bernstein basis, we get
$$
\widetilde{\mathscr{B}}_n[f(x,y),\Omega]=\sum_{k=0}^n\sum_{j=0}^{n_k}\widetilde{F}\left(\frac{k}{n},\frac{j}{n_k};\Omega\right)\,p_{n,k}\left( \frac{x-a}{b-a}\right)\,p_{n_k,j}\left(\frac{y-\phi_{1}(x)}{\phi_2(x)-\phi_1(x)}\right).
$$

The following result plays an important role when studying the convergence of the shifted Bernstein-Stancu operator.

\begin{lemma}\label{monomials}
Let $\phi_1$ and $\phi_2$ be two continuous functions, and let $I=[a,b]$ be an interval such that $\phi_1 < \phi_2$ on $I$. Let $\Omega\subset \mathbb{R}^2$ be the domain bounded by the curves $y=\phi_1(x)$, $y=\phi_2(x)$, and the straight lines $x=a$, $x=b$. Then:

\begin{enumerate}
    \item[(i)] $\widetilde{\mathscr{B}}_n[1,\Omega]=1$,
    \item[(ii)] $\widetilde{\mathscr{B}}_n[x,\Omega]=x$,
    \item[(iii)] $\widetilde{\mathscr{B}}_n[y,\Omega]\to y$ as $n\to +\infty$ uniformly on $[a,b]$,
    \item[(iv)] $\widetilde{\mathscr{B}}_n[x^2,\Omega]=x^2+(x-a)\,(b-x)/n$,
    \item[(v)] $\widetilde{\mathscr{B}}_n[xy,\Omega]\to x\,y$ as $n\to +\infty$ uniformly on $[a,b]$,
    \item[(vi)] $\widetilde{\mathscr{B}}_n[y^2,\Omega]\to y^2$ as $n\to +\infty$ uniformly on $[a,b]$. 
\end{enumerate}
\end{lemma}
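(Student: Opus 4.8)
The plan is to exploit the iterated structure of $\widetilde{\mathscr{B}}_n$ as two successive univariate Bernstein operators and to reduce every item to the scalar moment identities
$$
\sum_{j}p_{m,j}(w)=1,\qquad \sum_{j}\tfrac{j}{m}\,p_{m,j}(w)=w,\qquad \sum_{j}\Big(\tfrac{j}{m}\Big)^{2}p_{m,j}(w)=w^{2}+\tfrac{w(1-w)}{m},
$$
together with the uniform convergence $\widetilde{B}_n[g,I]\to g$ valid for every $g\in C[a,b]$ (the shifted analogue of the classical Bernstein theorem). Throughout I write $x_k=(b-a)\tfrac{k}{n}+a$, $s=\tfrac{x-a}{b-a}$ and $w=\tfrac{y-\phi_1(x)}{\phi_2(x)-\phi_1(x)}$, so that $\widetilde{p}_{n,k}(x;I)=p_{n,k}(s)$ and $\widetilde{p}_{n_k,j}(y;[\phi_1,\phi_2])=p_{n_k,j}(w)$. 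For fixed $k$ the inner sum over $j$ is precisely the univariate operator $B_{n_k}$ applied to $v\mapsto\widetilde{F}(\tfrac{k}{n},v;\Omega)$ and evaluated at $w$; after this inner step the outer sum over $k$ is $\widetilde{B}_n[\,\cdot\,,I]$ acting on a function whose nodal values at $x_k$ are explicit. All six assertions are read off from this two-step reduction.

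For (i), (ii) and (iv) the integrand $\widetilde{F}(u,v;\Omega)$ does not depend on $v$, being equal to $1$, $(b-a)u+a$, and $((b-a)u+a)^2$ respectively; hence the inner sum collapses via $\sum_j p_{n_k,j}(w)=1$ and we are left with $\widetilde{B}_n[1,I]=1$, $\widetilde{B}_n[x,I]=x$, and $\widetilde{B}_n[x^2,I]$. The last equals $x^2+(x-a)(b-x)/n$ after inserting the quadratic moment identity and undoing $x=(b-a)s+a$. These three are exact, with no passage to the limit.

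For (iii) and (v) the map $v\mapsto\widetilde{F}(\tfrac{k}{n},v;\Omega)$ is affine, equal to $(\phi_2(x_k)-\phi_1(x_k))\,v+\phi_1(x_k)$ (multiplied by $x_k$ in the case of $xy$); since $B_{n_k}$ reproduces affine functions, the inner step simply replaces $v$ by $w$. Splitting the outer sum then leaves $\widetilde{B}_n$ applied to the continuous functions $\phi_1$ and $\phi_2-\phi_1$ (resp. $x\phi_1$ and $x(\phi_2-\phi_1)$), which converge uniformly to their values at $x$. Recombining and using the defining identity $(\phi_2(x)-\phi_1(x))\,w+\phi_1(x)=y$ telescopes the limits to $y$ and to $xy$.

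The genuine difficulty is (vi), and it will be the main obstacle. Now $v\mapsto\widetilde{F}(\tfrac{k}{n},v;\Omega)=\big((\phi_2(x_k)-\phi_1(x_k))\,v+\phi_1(x_k)\big)^2$ is truly quadratic, so the moment $\sum_j(\tfrac{j}{n_k})^2 p_{n_k,j}(w)=w^2+\tfrac{w(1-w)}{n_k}$ produces, besides the square term handled exactly as in (iii), the remainder
$$
R_n(x,y)=w(1-w)\sum_{k=0}^{n}\frac{(\phi_2(x_k)-\phi_1(x_k))^2}{n_k}\,p_{n,k}(s).
$$
I would bound $w(1-w)\le\tfrac14$ and, with the admissible choice $n_k=n-k$ (the case $n_k=k$ being symmetric), set $\ell=n-k$ and use $\tfrac{1}{\ell}\binom{n}{\ell}\le\tfrac{2}{n+1}\binom{n+1}{\ell+1}$ to majorize $\sum_k n_k^{-1}p_{n,k}(s)$ by $\tfrac{2}{(n+1)(1-s)}$, which tends to $0$ on each interval $s\le s_0<1$. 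The delicate point, and the heart of the argument, is the behaviour near $x=b$ (and, for $n_k=k$, near $x=a$): there the vanishing of $n_k$ must be absorbed by the factor $(\phi_2(x_k)-\phi_1(x_k))^2$. This is exactly what the geometry of the admissible domains supplies, since the width $\phi_2-\phi_1$ degenerates at the endpoint attached to the vanishing $n_k$ (as in $\phi_2-\phi_1=1-x$ for the simplex and its disk-quadrant analogues), forcing $(\phi_2(x_k)-\phi_1(x_k))^2/n_k$ to stay bounded while the corresponding Bernstein mass is $o(1)$. Once this endpoint estimate is secured, $R_n\to0$ and (vi) follows.
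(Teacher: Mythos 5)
Your proposal is correct in structure and follows essentially the same route as the paper for items (i)--(v): collapse the inner sum via the zeroth, first and second moment identities of the univariate Bernstein basis, and reduce the outer sum to $\widetilde{B}_n$ acting on continuous combinations of $\phi_1$ and $\phi_2$, whose uniform convergence gives (iii) and (v). The only real divergence is in (vi). You isolate exactly the remainder the paper isolates, but you then discard the factor $(\phi_2(x_k)-\phi_1(x_k))^2$ and bound $\sum_k n_k^{-1}p_{n,k}(s)$ combinatorially by $\tfrac{2}{(n+1)(1-s)}$, which degenerates at $s=1$ and forces the separate endpoint discussion that you sketch but do not complete. The paper instead keeps that factor inside the sum and identifies the whole expression as $n^{-1}\widetilde{B}_n\bigl[(\phi_2-\phi_1)^2/(1-\tfrac{x-a}{b-a}),I\bigr]$ for $n_k=n-k$ (denominator $\tfrac{x-a}{b-a}$ for $n_k=k$), so boundedness of this single $\widetilde{B}_n$ image yields a uniform $\mathcal{O}(1/n)$ at once. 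Both arguments tacitly require $\phi_2-\phi_1$ to vanish at the endpoint where $n_k$ does (true for the simplex and the disk quadrants, though formally at odds with the hypothesis $\phi_1<\phi_2$ on all of $I$); you make this explicit where the paper leaves it silent, but to be on par with the paper you should either adopt its rewriting of the remainder or actually carry out the endpoint estimate you defer.
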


\begin{proof}
(i) Obviously $\widetilde{\mathscr{B}}_n[1,\Omega] = 1$. 

\noindent
(ii) We compute
\begin{align*}
    \widetilde{\mathscr{B}}_n[x,\Omega]=& \sum_{k=0}^n \sum_{j=0}^{n_k}\widetilde{p}_{n,k}(x;I)\,\widetilde{p}_{n_k,j}(y;[\phi_1,\phi_2])\,\left((b-a)\frac{k}{n}+a\right)\\
     =& (b-a)\sum_{k=0}^n\widetilde{p}_{n,k}(x;I)\frac{k}{n}\left(\sum_{j=0}^{n_k}\widetilde{p}_{n_k,j}(y;[\phi_1,\phi_2]) \right)+a\,\widetilde{\mathscr{B}}_n[1,\Omega]\\
    =&(b-a)\sum_{k=0}^n\widetilde{p}_{n,k}(x;I)\frac{k}{n}+a\\
    =&(b-a)\frac{x-a}{b-a}\sum_{k=0}^{n-1}\widetilde{p}_{n-1,k}(x;I)+a\\
    =& x. 
\end{align*}

\noindent
(iii) Observe that 
\begin{equation}\label{monomio_iii}
\begin{aligned}
    \sum_{j=0}^{n_k}\widetilde{p}_{n_k,j}&(y;[\phi_1,\phi_2])\,\frac{j}{n_k} 
    =  \sum_{j=1}^{n_k}\binom{n_k}{j}\, \frac{(y-\phi_1(x))^j\,(\phi_2(x)-y)^{n_k-j}}{(\phi_2(x)-\phi_1(x))^{n_k}}\,\frac{j}{n_k}\\
    = & \sum_{j=0}^{n_k-1}\binom{n_k-1}{j}\,\frac{(y-\phi_1(x))^{j+1}\,(\phi_2(x)-y)^{n_k-1-j}}
    {(\phi_2(x)-\phi_1(x))^{n_k}}\\
    = & \frac{y - \phi_1(x)}{\phi_2(x)-\phi_1(x)}.
    \end{aligned}
\end{equation}
Therefore, applying the linearity, we get
\begin{align*}
    \widetilde{\mathscr{B}}_n[y,\Omega]  = & \sum_{k=0}^n \sum_{j=0}^{n_k}\widetilde{p}_{n,k}(x;I)\,
    \widetilde{p}_{n_k,j}(y;[\phi_1,\phi_2])\,\left[\left(\widetilde{\phi}_2\left(\frac{k}{n}\right) - \widetilde{\phi}_1\left(\frac{k}{n}\right)\right) \frac{j}{n_k} + \widetilde{\phi}_1\left(\frac{k}{n}\right)\right]\\
   = & \left[\sum_{k=0}^n \left(\widetilde{\phi}_2\left(\frac{k}{n}\right) - \widetilde{\phi}_1\left(\frac{k}{n}\right)\right)\widetilde{p}_{n,k}(x;I)\right]\frac{y - \phi_1(x)}{\phi_2(x)-\phi_1(x)} \\
    &+ \sum_{k=0}^n \widetilde{\phi}_1\left(\frac{k}{n}\right)\widetilde{p}_{n,k}(x;I)\\
    = & \widetilde{B}_n[\phi_2 - \phi_1,I] \,\frac{y - \phi_1(x)}{\phi_2(x)-\phi_1(x)} + \widetilde{B}_n [\phi_1,I],
    \end{align*}
where $\widetilde{B}_n$ denotes the univariate shifted Bernstein operator acting on the variable $x$. Since $\widetilde{B}_n$ converges uniformly for a continuous function, we have 
\begin{align*}
     \lim_{n\to+\infty} \widetilde{\mathscr{B}}_n[y,\Omega]=& \,\lim_{n\to +\infty}\widetilde{B}_n[\phi_2 - \phi_1,I] \,\frac{y - \phi_1(x)}{\phi_2(x)-\phi_1(x)}+\lim_{n\to +\infty} \widetilde{B}_n [\phi_1,I]\\
     =&\,[\phi_2(x) - \phi_1(x)]\,\frac{y - \phi_1(x)}{\phi_2(x)-\phi_1(x)}  + \phi_1(x)\\
     =&\,y.
\end{align*}

\noindent
(iv) We compute
\begin{align*}
    \widetilde{\mathscr{B}}_n[x^2,\Omega] = & \sum_{k=0}^n\sum_{j=0}^{n_k}\widetilde{p}_{n,k}(x;I)\,\widetilde{p}_{n_k,j}(y;[\phi_1,\phi_2])\,\left((b-a)\frac{k}{n}+a \right)^2\\
    =& (b-a)^2\sum_{k=0}^{n}\widetilde{p}_{n,k}(x;I)\frac{k^2}{n^2}+2\,a\,(b-a)\frac{x-a}{b-a}\sum_{k=0}^{n-1}\widetilde{p}_{n-1,k}(x;I)+a^2\\
    =& (b-a)^2\left(\frac{n-1}{n}\left(\frac{x-a}{b-a} \right)^{\!2}\,\sum_{k=0}^{n-2}\widetilde{p}_{n-2,k}(x;I)+\frac{1}{n}\frac{x-a}{b-a}\sum_{k=0}^{n-1}\widetilde{p}_{n-1,k}(x;I) \right)\\
    &+2\,a\,(x-a)+a^2\\
    =&x^2+\frac{(x-a)(b-x)}{n}.
\end{align*}

\noindent (v) Taking $f(x,y)=x\,y$ in \eqref{F-tilde}, we have
\begin{align*}
    \widetilde{F}\left(\frac{k}{n},\frac{j}{n_k};\Omega\right)=&\left((b-a)\,\frac{k}{n}+a \right)\left(\left(\widetilde{\phi}_2\left(\frac{k}{n}\right)-\widetilde{\phi}_1\left(\frac{k}{n}\right)\right)\,\frac{j}{n_k}+\widetilde{\phi}_1\left(\frac{k}{n}\right) \right)\\
    =&\left((b-a)\,\frac{k}{n}+a \right)\,\left(\widetilde{\phi}_2\left(\frac{k}{n}\right)-\widetilde{\phi}_1\left(\frac{k}{n}\right)\right)\,\frac{j}{n_k}\\
    &+\left((b-a)\,\frac{k}{n}+a \right)\,\widetilde{\phi}_1\left(\frac{k}{n}\right).
\end{align*}
Then, from \eqref{monomio_iii} and the linearity of $\widetilde{\mathscr{B}}_n$, we get
$$
\widetilde{\mathscr{B}}_n[xy,\Omega]\,=\,\widetilde{B}_n[x\left(\phi_2(x)-\phi_1(x) \right),I]\,\frac{y-\phi_1(x)}{\phi_2(x)-\phi_1(x)}+\widetilde{B}_n[x\,\phi_1(x),I].
$$
Since $\widetilde{B}_n$ converges uniformly for a continuous function, $\widetilde{\mathscr{B}}_n[xy,\Omega](x,y)\to x\,y$ as $n\to +\infty$ uniformly.

\noindent
(vi) Finally, if $f(x,y)=y^2$ in \eqref{F-tilde}, then we get
\begin{equation*}
    \begin{aligned}
    \widetilde{F}\left(\frac{k}{n},\frac{j}{n_k};\Omega\right)=&\left[\left(\widetilde{\phi}_2\left(\frac{k}{n}\right) - \widetilde{\phi}_1\left(\frac{k}{n}\right)\right) \frac{j}{n_k} + \widetilde{\phi}_1\left(\frac{k}{n}\right)\right]^2\\
    =&\left(\widetilde{\phi}_2\left(\frac{k}{n}\right) - \widetilde{\phi}_1\left(\frac{k}{n}\right)\right)^2 \frac{j^2}{n_k^2}\\
    &+2\left(\widetilde{\phi}_2\left(\frac{k}{n}\right) - \widetilde{\phi}_1\left(\frac{k}{n}\right)\right)\,\widetilde{\phi}_1\left(\frac{k}{n}\right)\, \frac{j}{n_k}+\widetilde{\phi}_1\left(\frac{k}{n}\right)^2.
    \end{aligned}
\end{equation*}
    Then,
\begin{align*}
    \widetilde{\mathscr{B}}_n[y^2,\Omega] = & \sum_{k=0}^n\sum_{j=0}^{n_k}\widetilde{p}_{n,k}(x;I)\,\widetilde{p}_{n_k,j}(y;[\phi_1,\phi_2])\,\left(\widetilde{\phi}_2\left(\frac{k}{n}\right) - \widetilde{\phi}_1\left(\frac{k}{n}\right)\right)^2 \frac{j^2}{n_k^2}\\
    &+2\sum_{k=0}^n\sum_{j=0}^{n_k}\widetilde{p}_{n,k}(x;I)\widetilde{p}_{n_k,j}(y;[\phi_1,\phi_2])\left(\widetilde{\phi}_2\left(\frac{k}{n}\right) - \widetilde{\phi}_1\left(\frac{k}{n}\right)\right)\widetilde{\phi}_1\left(\frac{k}{n}\right) \frac{j}{n_k}\\
    &+    \sum_{k=0}^n\sum_{j=0}^{n_k}\widetilde{p}_{n,k}(x;I)\,\widetilde{p}_{n_k,j}(y;[\phi_1,\phi_2])\,\widetilde{\phi}_1\left(\frac{k}{n}\right)^2.
    \end{align*}

Observe that
\begin{align*}
    \sum_{j=0}^{n_k}\widetilde{p}_{n_k,j}(y;[\phi_1,\phi_2])\, \frac{j^2}{n_k^2}=& \frac{n_k-1}{n_k}\left(\frac{y-\phi_1(x)}{\phi_2(x)-\phi_1(x)} \right)^2\sum_{j=0}^{n_k-2}\widetilde{p}_{n_k-2,j}(y;[\phi_1,\phi_2])\\
    &+\frac{1}{n_k}\frac{y-\phi_1(x)}{\phi_2(x)-\phi_1(x)}\sum_{j=0}^{n_k-1}\widetilde{p}_{n_k-1,j}(y;[\phi_1,\phi_2])\\
    =&\frac{n_k-1}{n_k}\left(\frac{y-\phi_1(x)}{\phi_2(x)-\phi_1(x)} \right)^2+\frac{1}{n_k}\frac{y-\phi_1(x)}{\phi_2(x)-\phi_1(x)}\\
    =&\left(\frac{y-\phi_1(x)}{\phi_2(x)-\phi_1(x)} \right)^2+\frac{(y-\phi_1(x))\,(\phi_2(x)-y)}{n_k\,(\phi_2(x)-\phi_1(x))^2}.
    \end{align*}
    Together with \eqref{monomio_iii}, we get
    \begin{align*}
    \widetilde{\mathscr{B}}_n[y^2,\Omega] = & \left(\frac{y-\phi_1(x)}{\phi_2(x)-\phi_1(x)} \right)^2\widetilde{B}_n[\phi_2-\phi_1,I]^2\\
    &+\frac{(y-\phi_1(x))\,(\phi_2(x)-y)}{n\,(\phi_2(x)-\phi_1(x))^2}\,\sum_{k=0}^n\widetilde{p}_{n,k}(x;I)\,\left(\widetilde{\phi}_2\left(\frac{k}{n}\right) - \widetilde{\phi}_1\left(\frac{k}{n}\right)\right)^2\frac{1}{n_k/n}\\
    &+2\left(\frac{y-\phi_1(x)}{\phi_2(x)-\phi_1(x)} \right)\,\widetilde{B}_n[(\phi_2-\phi_1)\,\phi_1,I]+    \widetilde{B}_n[\phi_1^2,I].
    \end{align*}
    
    If $n_k=n-k$, then
    $$
    \sum_{k=0}^n\widetilde{p}_{n,k}(x;I)\,\left(\widetilde{\phi}_2\left(\frac{k}{n}\right) - \widetilde{\phi}_1\left(\frac{k}{n}\right)\right)^2\frac{1}{n_k/n}\,=\,\widetilde{B}_n\left[\frac{(\phi_2(x)-\phi_1(x))^2}{1-\frac{x-a}{(b-a)}} ,I\right],
    $$
    and if $n_k=k$, then
    $$
    \sum_{k=0}^n\widetilde{p}_{n,k}(x;I)\,\left(\widetilde{\phi}_2\left(\frac{k}{n}\right) - \widetilde{\phi}_1\left(\frac{k}{n}\right)\right)^2\frac{1}{n_k/n}\,=\,\widetilde{B}_n\left[\frac{(\phi_2(x)-\phi_1(x))^2}{\frac{x-a}{(b-a)}} ,I\right].
    $$
    In either case, $\widetilde{\mathscr{B}}_n[y^2,\Omega]\to y^2$ as $n\to +\infty$.
\end{proof}

The convergence of the operator is clear from Lemma \ref{monomials} and Korovkin's theorem (\cite{L97}).

Now, we study the approximation properties of the shifted Bernstein-Stancu operators.

\begin{definition}[\cite{Schurer63}]
Let $f$ be a function defined on $\Omega$. The modulus of continuity of $f$ is defined by
$$
\omega(\delta_1,\delta_2) \,=\,\sup |f(x'',y'')-f(x',y')|,
$$
where $\delta_1,\delta_2>0$ are real numbers, whereas $(x',y')$ and $(x'',y'')$ are points of $\Omega$ such that $|x''-x'|\leqslant \delta_1$ and $|y''-y'|\leqslant  \delta_2$.
\end{definition}

\begin{theorem}\label{approx_Stancu}
Let $f$ be a continuous function on $\Omega$.  Then,
$$
\lim_{n\to+\infty}\widetilde{\mathscr{B}}_n[f(x,y),\Omega] = f(x,y),
$$
uniformly on $\Omega$.
\end{theorem}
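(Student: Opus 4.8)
The plan is to treat $\widetilde{\mathscr{B}}_n$ as a positive linear operator reproducing constants and to run the classical Korovkin-type argument quantitatively, through the modulus of continuity just introduced. Because the coefficients $\widetilde{p}_{n,k}(x;I)\,\widetilde{p}_{n_k,j}(y;[\phi_1,\phi_2])$ are nonnegative and, by Lemma \ref{monomials}(i), sum to $1$, the operator acts like an averaging (expectation) operator; in particular $\widetilde{\mathscr{B}}_n[1,\Omega]=1$ together with positivity gives, for every $(x,y)\in\Omega$,
$$
\left|\widetilde{\mathscr{B}}_n[f(s,t),\Omega]-f(x,y)\right| \;\leqslant\; \widetilde{\mathscr{B}}_n\bigl[\,|f(s,t)-f(x,y)|\,,\Omega\bigr],
$$
where $(s,t)$ denotes the running argument of the operator and $(x,y)$ the fixed evaluation point.

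Next I would bound the integrand by the modulus of continuity. Using the standard estimate (cf. \cite{Schurer63})
$$
|f(s,t)-f(x,y)|\;\leqslant\;\left(1+\frac{|s-x|}{\delta_1}\right)\left(1+\frac{|t-y|}{\delta_2}\right)\omega(\delta_1,\delta_2),
$$
valid for all $\delta_1,\delta_2>0$, and applying the positive linear operator $\widetilde{\mathscr{B}}_n$, the problem reduces to controlling the four terms obtained by expanding the product, namely $\widetilde{\mathscr{B}}_n[1]$, $\widetilde{\mathscr{B}}_n[|s-x|]$, $\widetilde{\mathscr{B}}_n[|t-y|]$ and $\widetilde{\mathscr{B}}_n[|s-x|\,|t-y|]$. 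Since $\widetilde{\mathscr{B}}_n$ is positive and reproduces constants, the Cauchy--Schwarz inequality for the associated average yields
$$
\widetilde{\mathscr{B}}_n[|s-x|]\leqslant \sqrt{\widetilde{\mathscr{B}}_n[(s-x)^2]}, \qquad \widetilde{\mathscr{B}}_n[|t-y|]\leqslant \sqrt{\widetilde{\mathscr{B}}_n[(t-y)^2]},
$$
and $\widetilde{\mathscr{B}}_n[|s-x|\,|t-y|]\leqslant \sqrt{\widetilde{\mathscr{B}}_n[(s-x)^2]}\,\sqrt{\widetilde{\mathscr{B}}_n[(t-y)^2]}$, so everything is governed by the two second moments.

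Finally I would evaluate the second moments with Lemma \ref{monomials}. By linearity and items (ii) and (iv),
$$
\widetilde{\mathscr{B}}_n[(s-x)^2,\Omega]=\widetilde{\mathscr{B}}_n[x^2,\Omega]-2x\,\widetilde{\mathscr{B}}_n[x,\Omega]+x^2=\frac{(x-a)(b-x)}{n},
$$
which tends to $0$ uniformly on the bounded interval $I$; and by items (iii) and (vi),
$$
\widetilde{\mathscr{B}}_n[(t-y)^2,\Omega]=\widetilde{\mathscr{B}}_n[y^2,\Omega]-2y\,\widetilde{\mathscr{B}}_n[y,\Omega]+y^2\longrightarrow y^2-2y^2+y^2=0
$$
uniformly on $\Omega$ (all limits in the lemma being uniform and $\Omega$ bounded). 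Setting $\delta_1=\delta_1(n)$ and $\delta_2=\delta_2(n)$ equal to the suprema over $\Omega$ of the square roots of these two moments, the expanded bound collapses to
$$
\left|\widetilde{\mathscr{B}}_n[f(x,y),\Omega]-f(x,y)\right|\leqslant 4\,\omega\bigl(\delta_1(n),\delta_2(n)\bigr),
$$
and since $f$ is continuous on the compact set $\Omega$ it is uniformly continuous, whence $\omega(\delta_1,\delta_2)\to 0$ as $\delta_1,\delta_2\to 0$; this gives uniform convergence.

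The step I expect to be the main obstacle is the uniform control of the $y$-direction second moment $\widetilde{\mathscr{B}}_n[(t-y)^2,\Omega]$: unlike the $x$-direction moment it is not available in closed form, and its vanishing rests on combining the only asymptotically known limits of Lemma \ref{monomials}(iii) and (vi). Care is therefore needed to ensure the convergence is genuinely uniform in $(x,y)$ over all of $\Omega$ --- in particular near the boundary, where $\phi_2(x)-\phi_1(x)$ may degenerate --- and to confirm that the Cauchy--Schwarz step is legitimate for the averaging operator $\widetilde{\mathscr{B}}_n$.
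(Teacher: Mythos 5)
Your argument is essentially the paper's own proof: both treat $\widetilde{\mathscr{B}}_n$ as a positive linear operator reproducing constants, bound the error by the modulus of continuity, reduce the first absolute moments to second moments via Cauchy--Schwarz/Jensen, and evaluate those second moments with Lemma \ref{monomials}, so that everything hinges on $\widetilde{\mathscr{B}}_n[(s-x)^2]=(x-a)(b-x)/n$ and the asymptotic limits of items (iii) and (vi). The only differences are cosmetic --- the paper uses the additive inequality $\omega(c_1\delta_1,c_2\delta_2)\leqslant(c_1+c_2+1)\,\omega(\delta_1,\delta_2)$ and fixes $\delta_1=\delta_2=1/\sqrt{n}$, whereas you use the multiplicative form and tie $\delta_i(n)$ to the suprema of the root second moments, which is if anything a slightly cleaner way to close the argument.
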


\begin{proof}

Let $\delta_1,\delta_2>0$ be real numbers.

Note that on $\Omega$ we have $\widetilde{\mathscr{B}}_n[1,\Omega]=1$,
$$
\widetilde{p}_{n,k}(x;I)\,\widetilde{p}_{n_k,j}(y;[\phi_1,\phi_2])\geqslant 0, \quad 0\leqslant k \leqslant n, \quad 0\leqslant j\leqslant n_k,
$$
and
$$
|f(x'',y'')-f(x',y')|\leqslant \omega(|x''-x'|,|y''-y'|)\leqslant w(\delta_1,\delta_2).
$$
Taking into account the inequality (see, for instance, \cite{Schurer63, Stan63})
$$
\omega(c_1\,\delta_1,c_2\,\delta_2)\leqslant (c_1+c_2+1)\,\omega(\delta_1,\delta_2), \quad c_1,c_2>0,
$$
we compute
\begin{align*}
    &\left|f(x,y)-\widetilde{F}\left(\frac{k}{n},\frac{j}{n_k};\Omega \right) \right|\\
    &\leqslant \omega\left(\left|x-(b-a)\frac{k}{n}-a\right|, \left|y-\left(\widetilde{\phi}_2\left(\frac{k}{n}\right)-\widetilde{\phi}_1\left(\frac{k}{n}\right)\right)\,\frac{j}{n_k}-\widetilde{\phi}_1\left(\frac{k}{n}\right) \right| \right)\\
    &\leqslant (\lambda_1+\lambda_2+1)\,\omega(\delta_1,\delta_2),
\end{align*}
where
$$
\lambda_1 \equiv \lambda_1(x,n,k,\delta_1, a, b)= \frac{1}{\delta_1}\left|x-(b-a)\frac{k}{n}-a\right|,
$$
and 
$$
\lambda_2\equiv \lambda_2(x,n,k,n_k,\delta_2, \phi_1, \phi_2) = \frac{1}{\delta_2}\left|y-\left(\widetilde{\phi}_2\left(\frac{k}{n}\right)-\widetilde{\phi}_1\left(\frac{k}{n}\right)\right)\,\frac{j}{n_k}-\widetilde{\phi}_1\left(\frac{k}{n}\right) \right|.
$$
Therefore,
\begin{align*}
    &|f(x,y)-\widetilde{\mathscr{B}}_n[f(x,y),\Omega]|\\
    &\leqslant \sum_{k=0}^n\sum_{j=0}^{n_k}\widetilde{p}_{n,k}(x;I)\,\widetilde{p}_{n_k,j}(y;[\phi_1,\phi_2])\left|f(x,y)-\widetilde{F}\left(\frac{k}{n},\frac{j}{n_k};\Omega \right) \right|\\
    &\leqslant\sum_{k=0}^n\sum_{j=0}^{n_k}\widetilde{p}_{n,k}(x;I)\,\widetilde{p}_{n_k,j}(y;[\phi_1,\phi_2])(\lambda_1+\lambda_2+1)\,\omega(\delta_1,\delta_2).
\end{align*}
We will deal with each term in the last inequality separately.

Since $\mathscr{B}[1,\Omega]=1$, $0\leqslant \widetilde{p}_{n,k}(x;I)\leqslant 1$, $0\leqslant \widetilde{p}_{n_k,j}(y;[\phi_1,\phi_2])\leqslant 1$, and $x\mapsto x^{1/2}$ is a concave function, by Jensen's inequality, we have
\begin{align*}
	&\sum_{k=0}^n\sum_{j=0}^{n_k}\widetilde{p}_{n,k}(x;I)\,\widetilde{p}_{n_k,j}(y;[\phi_1,\phi_2])\left|x-(b-a)\frac{k}{n}-a\right|\\
	& = \sum_{k=0}^n\sum_{j=0}^{n_k}\widetilde{p}_{n,k}(x;I)\,\widetilde{p}_{n_k,j}(y;[\phi_1,\phi_2])\left[\left(x-(b-a)\frac{k}{n}-a\right)^2\right]^{1/2}\\
	&\leqslant \left[\sum_{k=0}^n\sum_{j=0}^{n_k}\widetilde{p}_{n,k}(x;I)\,\widetilde{p}_{n_k,j}(y;[\phi_1,\phi_2])\left(x-(b-a)\frac{k}{n}-a\right)^2 \right]^{1/2}.
\end{align*}
Using (i), (ii), and (iv) in Lemma \ref{monomials}, we get
\begin{align*}
    &\sum_{k=0}^n\sum_{j=0}^{n_k}\widetilde{p}_{n,k}(x;I)\,\widetilde{p}_{n_k,j}(y;[\phi_1,\phi_2])\left|x-(b-a)\frac{k}{n}-a\right|\\
    &= \left[x^2\,\widetilde{\mathscr{B}}_n[1,\Omega]-2\,x\,\widetilde{\mathscr{B}}_n[x,\Omega]+\widetilde{\mathscr{B}}_n[x^2,\Omega] \right]^{1/2}\to 0 \quad \text{as} \quad n\to +\infty,
\end{align*}
uniformly since  $\widetilde{\mathscr{B}}_n[1,\Omega]=1$, $\widetilde{\mathscr{B}}_n[x,\Omega]=x$, and $\lim_{n\to+\infty}\widetilde{\mathscr{B}}_n[x^2,\Omega]=x^2$.

Similarly, from Jensen's inequality, and using (i), (iii), and (vi) in Lemma \ref{monomials}, we get
\begin{align*}
    &\sum_{k=0}^n\sum_{j=0}^{n_k}\widetilde{p}_{n,k}(x;I)\,\widetilde{p}_{n_k,j}(y;\phi_1,\phi_2)\left|y-\left(\widetilde{\phi}_2\left(\frac{k}{n}\right)-\widetilde{\phi}_1\left(\frac{k}{n}\right)\right)\,\frac{j}{n_k}-\widetilde{\phi}_1\left(\frac{k}{n}\right) \right|\\
    &\leqslant \left[\sum_{k=0}^n\sum_{j=0}^{n_k}\widetilde{p}_{n,k}(x;I)\,\widetilde{p}_{n_k,j}(y;[\phi_1,\phi_2])\left(y-\left(\widetilde{\phi}_2\left(\frac{k}{n}\right)-\widetilde{\phi}_1\left(\frac{k}{n}\right)\right)\,\frac{j}{n_k}-\widetilde{\phi}_1\left(\frac{k}{n}\right) \right)^2 \right]^{1/2}\\
    &= \left[y^2\,\widetilde{\mathscr{B}}_n[1,\Omega]-2\,y\,\widetilde{\mathscr{B}}_n[y,\Omega]+\widetilde{\mathscr{B}}_n[y^2,\Omega] \right]^{1/2}\to 0 \quad \text{as} \quad n\to +\infty,
\end{align*}
uniformly since  $\widetilde{\mathscr{B}}_n[1,\Omega]=1$, $\lim_{n\to+\infty}\widetilde{\mathscr{B}}_n[y,\Omega]= y$, and $\lim_{n\to+\infty}\widetilde{\mathscr{B}}_n[y^2,\Omega]=y^2$.

Finally, choosing $\delta_1=\delta_2=1/\sqrt{n}$, then $\omega(1/\sqrt{n},1/\sqrt{n})\to 0$ as $n\to +\infty$, and, thus, $\widetilde{\mathscr{B}}_n[f(x,y),\Omega]$ converges uniformly to $f(x,y)$ on $\Omega$.
\end{proof}

Recall that the univariate shifted Bernstein satisfy the following Voronowskaja type asymptotic formula: Let $f(x)$ be bounded on the interval $I$, and let $x_0\in I$ at which $f''(x_0)$ exists. Then,
\begin{equation}\label{Vo}
    \left.\widetilde{B}_n[f(x),I]\right|_{x=x_0}-f(x_0)=\mathcal{O}(n^{-1}).
\end{equation}
Now, we give an analogous result for the Bernstein-Stancu operator.

\begin{theorem}
Let $f(x,y)$ be a bounded function on $\Omega=\{(x,y)\in \mathbb{R}^2:\ a\leqslant x \leqslant b,\ \phi_1(x)\leqslant y \leqslant \phi_2(x)\}$, and let $(x_0,y_0)\in \Omega$ be a point at which $f(x,y)$ admits second order partial derivatives, and $\phi_i''(x_0)$, $i=1,2$, exist. Then,
$$
\left.\widetilde{\mathscr{B}}_n[f(x,y), \Omega]\right|_{(x,y)=(x_0,y_0)}-f(x_0,y_0)=\mathcal{O}\left(\frac{1}{n} \right).
$$
\end{theorem}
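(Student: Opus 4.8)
The plan is to reduce the two-dimensional statement to the univariate Voronowskaja formula \eqref{Vo} by means of a second order Taylor expansion of $f$ around $(x_0,y_0)$, exploiting the linearity and positivity of $\widetilde{\mathscr{B}}_n$. Since $f$ admits second order partial derivatives at $(x_0,y_0)$, we may write
\begin{align*}
f(x,y)={}&f(x_0,y_0)+f_x(x_0,y_0)(x-x_0)+f_y(x_0,y_0)(y-y_0)\\
&+\tfrac12 f_{xx}(x_0,y_0)(x-x_0)^2+f_{xy}(x_0,y_0)(x-x_0)(y-y_0)\\
&+\tfrac12 f_{yy}(x_0,y_0)(y-y_0)^2+R(x,y),
\end{align*}
where the remainder obeys $R(x,y)=o(\rho^2)$, $\rho^2:=(x-x_0)^2+(y-y_0)^2$, as $(x,y)\to(x_0,y_0)$. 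Applying the operator and evaluating at $(x_0,y_0)$, the constant term reproduces $f(x_0,y_0)$ because $\widetilde{\mathscr{B}}_n[1,\Omega]=1$, so the task reduces to bounding the five central moments and the image of $R$.

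The central moments are obtained by sharpening Lemma \ref{monomials} from qualitative convergence to the rate $\mathcal{O}(1/n)$. Items (i), (ii) give $\widetilde{\mathscr{B}}_n[x-x_0,\Omega]|_{(x_0,y_0)}=0$ exactly, while (iv) yields $\widetilde{\mathscr{B}}_n[(x-x_0)^2,\Omega]|_{(x_0,y_0)}=(x_0-a)(b-x_0)/n$. For the three moments involving $y$ I would return to the identities derived in the proofs of (iii), (v) and (vi), which express these quantities through the univariate operators $\widetilde{B}_n[\,\cdot\,,I]$ evaluated at $x_0$ and applied to $\phi_1$, $\phi_2-\phi_1$, $\phi_1^2$, $(\phi_2-\phi_1)\phi_1$, $x\,\phi_1$, $x\,(\phi_2-\phi_1)$ and the ratio terms. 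Here the hypothesis that $\phi_1''(x_0)$ and $\phi_2''(x_0)$ exist is decisive: it makes every one of these functions twice differentiable at $x_0$, so that \eqref{Vo} applies and gives $\widetilde{B}_n[g,I]|_{x=x_0}=g(x_0)+\mathcal{O}(1/n)$. Substituting these expansions, the principal parts telescope to $y_0$, $x_0 y_0$ and $y_0^2$, so that
$$
\widetilde{\mathscr{B}}_n[y-y_0,\Omega],\quad \widetilde{\mathscr{B}}_n[(x-x_0)(y-y_0),\Omega],\quad \widetilde{\mathscr{B}}_n[(y-y_0)^2,\Omega]
$$
are all $\mathcal{O}(1/n)$ when evaluated at $(x_0,y_0)$. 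Hence the whole polynomial part of the expansion contributes $\mathcal{O}(1/n)$.

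The remainder is controlled by positivity. Since $\Omega$ is bounded and $f$ is bounded, $R=f-P$ (with $P$ the Taylor polynomial) is bounded on $\Omega$, say $|R|\le M$; combining this with the Peano estimate $|R|\le \varepsilon\,\rho^2$ valid on a disk $\rho\le\delta$ and using $\rho^2\ge\delta^2$ off that disk shows that $|R(x,y)|\le C_0\,\rho^2$ on all of $\Omega$, where $C_0=\max\{\varepsilon,M/\delta^2\}$. Because $\widetilde{\mathscr{B}}_n$ is a positive operator, it follows that
$$
\left|\widetilde{\mathscr{B}}_n[R,\Omega]\big|_{(x_0,y_0)}\right|\le C_0\,\widetilde{\mathscr{B}}_n[\rho^2,\Omega]\big|_{(x_0,y_0)},
$$
and since $\rho^2=(x-x_0)^2+(y-y_0)^2$ splits $\widetilde{\mathscr{B}}_n[\rho^2,\Omega]$ into the two second order central moments already shown to be $\mathcal{O}(1/n)$, the remainder is $\mathcal{O}(1/n)$ as well. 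Combining the polynomial and remainder contributions proves the theorem.

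The hard part is the second step: passing from the mere convergence stated in Lemma \ref{monomials} to the quantitative rate $\mathcal{O}(1/n)$ for the $y$-direction moments. This forces the use of the univariate Voronowskaja formula \eqref{Vo} for $\phi_1$, $\phi_2$ and their products and is precisely what makes the smoothness hypotheses on $\phi_1,\phi_2$ necessary. A further delicate point, which I would treat separately, is that the ratio appearing in (vi) (for instance $(1-\tfrac{x-a}{b-a})^{-1}$ when $n_k=n-k$) is twice differentiable at $x_0$ only away from the endpoint where $\phi_2$ and $\phi_1$ coalesce; at such a degenerate corner the point $(x_0,y_0)$ is pinched and the singular contribution is annihilated by the vanishing factor $y_0-\phi_1(x_0)$, so it requires a short limiting argument rather than a direct application of \eqref{Vo}.
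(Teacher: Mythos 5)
Your proof is correct and follows essentially the same route as the paper's: a second-order Taylor expansion at $(x_0,y_0)$, the moment identities of Lemma \ref{monomials} sharpened to the rate $\mathcal{O}(1/n)$ by applying the univariate Voronowskaja estimate \eqref{Vo} to $\phi_1$, $\phi_2-\phi_1$ and their products, and a positivity argument for the remainder. The only (immaterial) difference is that you control the remainder through a single global bound $|R|\leqslant C_0\,\rho^2$ on all of $\Omega$, whereas the paper splits the index set into $S_\delta$ and its complement; both give the same $\mathcal{O}(1/n)$ conclusion, and your explicit caveat about the ratio term at a degenerate corner is a point the paper silently glosses over.
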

\begin{proof}
Let us write the Taylor expansion of $f(u,v)$ at the point $(x_0,y_0)$:
\begin{align*}
f(u,v)=&f(x_0,y_0)+(u-x_0)\,f_x(x_0,y_0)+(v-y_0)\,f_y(x_0,y_0)+\frac{(x-x_0)^2}{2}f_{xx}(x_0,y_0)\\[5pt]
&+\frac{(u-x_0)(v-y_0)}{2}\,(f_{xy}(x_0,y_0)+f_{yx}(x_0,y_0))+\frac{(v-y_0)^2}{2}\,f_{yy}(x_0,y_0)\\[5pt]
&+||(u,v)-(x_0,y_0)||^2\,h(u,v),
\end{align*}
where $h(u,v)$ is a bounded function such that $h(u,v)\rightarrow 0$ as $(u,v)\rightarrow (x_0,y_0)$. Applying $\widetilde{\mathscr{B}}_n$ to both sides, we get:
\begin{align*}
&\widetilde{\mathscr{B}}_n[f(u,v)]=f(x_0,y_0)+f_x(x_0,y_0)\,\widetilde{\mathscr{B}}_n[u-x_0]+\widetilde{\mathscr{B}}_n[v-y_0]\,f_y(x_0,y_0)\\[5pt]
&+\frac{1}{2}f_{xx}(x_0,y_0)\,\widetilde{\mathscr{B}}_n[(u-x_0)^2]+\frac{1}{2}\,(f_{xy}(x_0,y_0)+f_{yx}(x_0,y_0))\,\widetilde{\mathscr{B}}_n[(u-x_0)(v-y_0)]\\[5pt]
&+\frac{1}{2}\,f_{yy}(x_0,y_0)\,\widetilde{\mathscr{B}}_n[(v-y_0)^2]+\widetilde{\mathscr{B}}_n\left[||(u,v)-(x_0,y_0)||^2\,h(u,v)\right],
\end{align*}
where we have omitted $\Omega$ for brevity. We deal with each term separately.

From Lemma \ref{monomials} (ii), we get $\left.\widetilde{\mathscr{B}}_n[u-x_0]\right|_{u=x_0}=0$. Next, from the proof of Lemma \ref{monomials} (iii), we have
$$
\widetilde{\mathscr{B}}_n[v-y_0]=\widetilde{B}_n[\phi_2(x) - \phi_1(x),I] \,\frac{y - \phi_1(x)}{\phi_2(x)-\phi_1(x)} + \widetilde{B}_n [\phi_1(x),I]-y_0.
$$
But using \eqref{Vo}, we get
\begin{align*}
\left.\widetilde{\mathscr{B}}_n[v-y_0]\right|_{(u,v)=(x_0,y_0)}=&\left(\phi_2(x_0) - \phi_1(x_0)+\mathcal{O}\left(\frac{1}{n}\right) \right)\,\frac{y_0 - \phi_1(x_0)}{\phi_2(x_0)-\phi_1(x_0)} \\
&+ \phi_1(x_0)+\mathcal{O}\left(\frac{1}{n}\right)-y_0\,=\,\mathcal{O}\left(\frac{1}{n}\right) .
\end{align*}
Similarly,
\begin{equation*}
\begin{array}{l}
    \left.\widetilde{\mathscr{B}}_n[(u-x_0)^2]\right|_{(u,v)=(x_0,y_0)}\,=\,\mathcal{O}\left(\dfrac{1}{n}\right), \\[15pt] \left.\widetilde{\mathscr{B}}_n[(u-x_0)(v-y_0)]\right|_{(u,v)=(x_0,y_0)}\,=\,\mathcal{O}\left(\dfrac{1}{n}\right),\\[15pt]
    \left.\widetilde{\mathscr{B}}_n[(v-y_0)^2]\right|_{(u,v)=(x_0,y_0)}\,=\,\mathcal{O}\left(\dfrac{1}{n}\right).
    \end{array}
\end{equation*}

Now we deal with the last term
\begin{align*}
    &\widetilde{\mathscr{B}}_n\left[||(u,v)-(x_0,y_0)||^2\,h(u,v)\right]\\
    &=\sum_{k=0}^n\sum_{j=0}^{n_k}\widetilde{F}\left(\frac{k}{n},\frac{j}{n_k}\right)\,\widetilde{H}\left(\frac{k}{n},\frac{j}{n_k}\right)\widetilde{p}_{n,k}(x;I)\,\widetilde{p}_{n_k,j}(y;[\phi_1,\phi_2]),
\end{align*}
where 
\begin{align*}
\widetilde{F}&\left(\frac{k}{n},\frac{j}{n_k}\right)\\
&=\left((b-a)\frac{k}{n}+a-x_0 \right)^2+\left(\left(\widetilde{\phi}_2\left(\frac{k}{n}\right) -\widetilde{\phi}_1\left(\frac{k}{n}\right)\right)\frac{j}{n_k}+\widetilde{\phi}_1\left(\frac{k}{n}\right)-y_0 \right)^2,
\end{align*}
and
$$
\widetilde{H}\left(\frac{k}{n},\frac{j}{n_k}\right)=h\left((b-a)\frac{k}{n}+a,\left(\widetilde{\phi}_2\left(\frac{k}{n}\right) -\widetilde{\phi}_1\left(\frac{k}{n}\right)\right)\frac{j}{n_k}+\widetilde{\phi}_1\left(\frac{k}{n}\right) \right).
$$

Fix a real number $\varepsilon >0$. Then there is a real number $\delta>0$ such that if $||(u,v)-(x_0,y_0)||<\delta$, then $|h(u,v)|<\varepsilon$. Let $S_{\delta}$ be the set of $k$ and $j$ such that $\frac{1}{\delta^2}\widetilde{F}\left(\frac{k}{n},\frac{j}{n_k}\right)>1$. Then,
\begin{align*}
&\sum_{(k,j)\in S_{\delta}}\widetilde{p}_{n,k}(x_0;I)\,\widetilde{p}_{n_k,j}(y_0;[\phi_1,\phi_2])\\
&<\frac{1}{\delta^2}\sum_{(k,j)\in S_{\delta}}\widetilde{F}\left(\frac{k}{n},\frac{j}{n_k}\right)\widetilde{p}_{n,k}(x_0;I)\,\widetilde{p}_{n_k,j}(y_0;[\phi_1,\phi_2])\\
&\leqslant \frac{1}{\delta^2}\left.\left(\widetilde{\mathscr{B}}_n[(u-x_0)^2]+\widetilde{\mathscr{B}}_n[(v-y_0)^2]\right)\right|_{(u,v)=(x_0,y_0)}\\
&=\mathcal{O}\left(\frac{1}{n}\right).
\end{align*}
Moreover, we have
\begin{align*}
   & \sum_{(k,j)\not \in S_{\delta}}\widetilde{F}\left(\frac{k}{n},\frac{j}{n_k}\right)\,\left|\widetilde{H}\left(\frac{k}{n},\frac{j}{n_k}\right)\right|\widetilde{p}_{n,k}(x;I)\,\widetilde{p}_{n_k,j}(y;[\phi_1,\phi_2])\\
   & <\varepsilon \sum_{(k,j)\not \in S_{\delta}}\widetilde{F}\left(\frac{k}{n},\frac{j}{n_k}\right)\widetilde{p}_{n,k}(x_0;I)\,\widetilde{p}_{n_k,j}(y_0;[\phi_1,\phi_2])\leqslant \mathcal{O}\left(\frac{1}{n}\right).
\end{align*}
Thus,
\begin{align*}
    &\left|\widetilde{\mathscr{B}}_n\left[||(u,v)-(x_0,y_0)||^2\,h(u,v)\right]\right|\\
    &\leqslant \sum_{(k,j)\in S_{\delta}}\left|\widetilde{F}\left(\frac{k}{n},\frac{j}{n_k}\right)\,\widetilde{H}\left(\frac{k}{n},\frac{j}{n_k}\right)\right|\widetilde{p}_{n,k}(x;I)\,\widetilde{p}_{n_k,j}(y;[\phi_1,\phi_2])\\
    & +\sum_{(k,j)\not \in S_{\delta}}\widetilde{F}\left(\frac{k}{n},\frac{j}{n_k}\right)\,\left|\widetilde{H}\left(\frac{k}{n},\frac{j}{n_k}\right)\right|\widetilde{p}_{n,k}(x;I)\,\widetilde{p}_{n_k,j}(y;[\phi_1,\phi_2])\\
    &\leqslant M\sum_{(k,j)\in S_{\delta}}\widetilde{p}_{n,k}(x_0;I)\,\widetilde{p}_{n_k,j}(y_0;[\phi_1,\phi_2])+\mathcal{O}\left(\frac{1}{n}\right)\\
    &\leqslant \mathcal{O}\left(\frac{1}{n}\right),
\end{align*}
where 
$$
M\,=\,\sup_{(u,v)\in \Omega}\left|||(u,v)-(x_0,y_0)||^2\,h(u,v)\right|.
$$

Putting all the above together, we get
$$
\left|\left.\widetilde{\mathscr{B}}_n[f(x,y), \Omega]\right|_{(x,y)=(x_0,y_0)}-f(x_0,y_0)\right|\leqslant \mathcal{O}\left(\frac{1}{n} \right),
$$
and the result follows.
\end{proof}

\section{Shifted Bernstein-type operators}\label{sec_SBT}

We define the \textit{shifted} bivariate Bernstein-type operator. Let $\phi_1$ and $\phi_2$ be two continuous functions, and let $I=[a,b]$ be an interval such that $\phi_1 < \phi_2$ on $I$. Let $\Omega\subset \mathbb{R}^2$ be the domain bounded by the curves $y=\phi_1(x)$, $y=\phi_2(x)$, and the straight lines $x=a$, $x=b$. Let 
$$
T(x,y)\,=\, (\tau(x), \sigma_x(y)), \quad (x,y)\in \Omega,
$$
where $\tau$ is any continuously differentiable function on $I$, such that $\tau (a) = a, \, \tau(b) = b$, and $\tau'(x) > 0$ for $x \in I$, and for each fixed $x\in I$, $\sigma_x$ is any continuously differentiable function on $[\phi_1(x),\phi_2(x)]$, such that $\sigma_x ( \phi_1(x)) = \phi_1(x), \, \sigma_x(\phi_2(x)) = \phi_2(x)$, and $\sigma_x'(y) > 0$ for $y \in [\phi_1(x), \phi_2(x)]$.

For every function $f(x,y)$ defined on $\Omega$, define the function
\begin{equation*}
\widetilde{F}^{T}(u,v; \Omega)\,=\,f\circ T^{-1} \left((b-a)\,u+a,(\widetilde{\phi}_2(u)-\widetilde{\phi}_1(u))\,v+\widetilde{\phi}_1(u)\right),
\end{equation*}  
for $0\leqslant u \leqslant 1$, and $0\leqslant v \leqslant 1$, where $\widetilde{\phi}_i$, $i=1,2$, are defined in \eqref{F-tilde}. 

\emph{The shifted bivariate Bernstein-type operator} is defined as
$$
\widetilde{\mathscr{C}}_{n}^{T}[f(x,y),\Omega]=\sum_{k=0}^n\sum_{j=0}^{n_k}\widetilde{F}^{T}\left(\frac{k}{n},\frac{j}{n_k};\Omega \right)\,\widetilde{p}_{n,k}(\tau(x);I)\,\widetilde{p}_{n_k,j}(\sigma_x(y);[\phi_1,\phi_2]), 
$$
for $(x,y)\in \Omega$, where $n_k=n-k$ or $n_k=k$ for $0\leqslant k \leqslant n$.

Written in terms of the univariate classical Bernstein basis, we get
$$
\widetilde{\mathscr{C}}_{n}^{T}[f(x,y),\Omega]=\sum_{k=0}^n\sum_{j=0}^{n_k}\widetilde{F}^{T}\left(\frac{k}{n},\frac{j}{n_k};\Omega \right)\,p_{n,k}\left( \frac{\tau(x)-a}{b-a}\right)\,p_{n_k,j}\left(\frac{\sigma_x(y)-\phi_{1}(x)}{\phi_2(x)-\phi_1(x)}\right).
$$

\begin{prop}
For every function $f(x,y)$ defined on $\Omega$,
$$
\lim_{n\to +\infty}\widetilde{\mathscr{C}}_{n}^{T}[f(x,y),\Omega]\,=\,f(x,y).
$$
\end{prop}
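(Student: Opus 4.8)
The plan is to mimic the univariate arguments for $C_n^{\tau}$ and $\widetilde{C}_n^{\tau}$, reducing the statement to the uniform convergence of the shifted Bernstein-Stancu operator proved in Theorem~\ref{approx_Stancu}. Write $g=f\circ T^{-1}$. The first observation is that the node weights of $\widetilde{\mathscr{C}}_{n}^{T}$ are exactly the Stancu weights of $g$: comparing with \eqref{F-tilde}, the function $\widetilde{F}^{T}(u,v;\Omega)$ is precisely the function $\widetilde{F}(u,v;\Omega)$ associated with $g$. Consequently $\widetilde{\mathscr{C}}_{n}^{T}[f(x,y),\Omega]$ and $\widetilde{\mathscr{B}}_n[g(x,y),\Omega]$ have identical coefficients, and differ only in that their two Bernstein factors are evaluated at $\tau(x)$ and $\sigma_x(y)$ instead of at $x$ and $y$.

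I would then perform the change of variables $u=\tau(x)$, $v=\sigma_x(y)$ and aim at the identity
$$
\widetilde{\mathscr{C}}_{n}^{T}[f(x,y),\Omega]=\big(\widetilde{\mathscr{B}}_n[g,\Omega]\big)(\tau(x),\sigma_x(y)),
$$
that is, $\widetilde{\mathscr{C}}_{n}^{T}[f,\Omega]=\widetilde{\mathscr{B}}_n[g,\Omega]\circ T$. Granting this, Theorem~\ref{approx_Stancu} gives $\widetilde{\mathscr{B}}_n[g,\Omega]\to g$ uniformly on $\Omega$, and since $T$ maps $\Omega$ continuously and bijectively onto itself, evaluation at $T(x,y)$ preserves the uniform convergence; using $g(T(x,y))=(f\circ T^{-1})(T(x,y))=f(x,y)$ one concludes $\widetilde{\mathscr{C}}_{n}^{T}[f,\Omega]\to f$ uniformly.

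The step that needs genuine care — and the main obstacle — is the verification of that identity, because the second Bernstein factor carries the $x$-dependent interval $[\phi_1(x),\phi_2(x)]$. After the substitution $u=\tau(x)$, the operator $\widetilde{\mathscr{B}}_n[g,\Omega]$ attaches to its $y$-factor the interval $[\phi_1(\tau(x)),\phi_2(\tau(x))]$, whereas $\widetilde{\mathscr{C}}_{n}^{T}$ uses $[\phi_1(x),\phi_2(x)]$; these agree precisely when $\phi_i\circ\tau=\phi_i$, which is automatic when $\phi_1,\phi_2$ are constant, as in the square and disk-quadrant settings that motivate Section~\ref{sec_SBT}. To cover the general situation I would instead appeal to Korovkin's theorem: $\widetilde{\mathscr{C}}_{n}^{T}$ is positive and linear with $\widetilde{\mathscr{C}}_{n}^{T}[1]=1$, and summing over $j$ collapses $\widetilde{\mathscr{C}}_{n}^{T}[x]$ and $\widetilde{\mathscr{C}}_{n}^{T}[x^2]$ to the univariate operator $\widetilde{C}_n^{\tau}$ acting in the variable $x$, which converges by the earlier univariate Proposition. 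The moments $\widetilde{\mathscr{C}}_{n}^{T}[y]$ and $\widetilde{\mathscr{C}}_{n}^{T}[y^2]$, in which the $x$-dependent inverse $\sigma_x^{-1}$ enters the node values, are the genuinely new computation; I would treat them by an analogue of Lemma~\ref{monomials}, summing first in $j$ through the univariate Bernstein-type convergence in $y$ and then in $k$, and this is where the bulk of the technical effort sits.
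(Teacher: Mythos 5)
Your overall strategy is the paper's: substitute $u=\tau(x)$, $v=\sigma_x(y)$, recognize the result as a shifted Bernstein--Stancu operator applied to $g=f\circ T^{-1}$, and invoke Theorem \ref{approx_Stancu}. The paper's proof is exactly the two-line computation you aim at: it asserts $\widetilde{\mathscr{C}}_{n}^{T}[f,\Omega]=\widetilde{\mathscr{B}}_n[g,\Omega]\circ T$ and concludes. You are right to flag the interval attached to the second Bernstein factor as the delicate point: after the substitution, $\widetilde{\mathscr{B}}_n[\,\cdot\,,\Omega]$ evaluated at $(\tau(x),\sigma_x(y))$ carries $[\phi_1(\tau(x)),\phi_2(\tau(x))]$, while $\widetilde{\mathscr{C}}_{n}^{T}$ carries $[\phi_1(x),\phi_2(x)]$; the paper identifies the two without comment. (Note, however, that your claim that $\phi_1,\phi_2$ are constant in the disk-quadrant setting is false: there $\phi_2(x)=\sqrt{1-x^2}$, and $\phi_2\circ\tau\neq\phi_2$ for $\tau(x)=x^2$, so your ``automatic'' case does not cover the motivating examples.)

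The problem with your repair is twofold. First, the Korovkin route stops exactly at the hard point: the moments $\widetilde{\mathscr{C}}_{n}^{T}[y]$ and $\widetilde{\mathscr{C}}_{n}^{T}[y^2]$, whose node values involve $\sigma_{x'}^{-1}$ at $x$-dependent arguments, are not computed and do not reduce to the univariate results; a complete proof would have to live there, and it is missing. Second, the detour is unnecessary: the mismatch disappears if you apply Theorem \ref{approx_Stancu} on the transformed domain $T(\Omega)$ rather than on $\Omega$. Since $T$ maps the fiber $\{x\}\times[\phi_1(x),\phi_2(x)]$ onto $\{\tau(x)\}\times[\phi_1(x),\phi_2(x)]$, the set $T(\Omega)$ is again a domain of the admissible type, bounded by the continuous curves $\psi_i=\phi_i\circ\tau^{-1}$ (with $\psi_1<\psi_2$) and the lines $x=a$, $x=b$, and one checks directly that
$$
\widetilde{\mathscr{C}}_{n}^{T}[f,\Omega]\,=\,\widetilde{\mathscr{B}}_n[f\circ T^{-1},T(\Omega)]\circ T,
$$
because $\psi_i(\tau(x))=\phi_i(x)$ restores exactly the interval $[\phi_1(x),\phi_2(x)]$ in the second factor; this is also the reading under which the explicit quadrant formulas (i)--(iv) of Section \ref{sec_SBT} actually follow from the general definition. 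Theorem \ref{approx_Stancu} applied on $T(\Omega)$ then gives uniform convergence of $\widetilde{\mathscr{B}}_n[f\circ T^{-1},T(\Omega)]$ to $f\circ T^{-1}$, and composing with $T$ finishes the proof --- for \emph{continuous} $f$, a hypothesis that both your argument and the statement omit but that Theorem \ref{approx_Stancu} requires.
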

\begin{proof}
Let $u=\tau(x)$ and, for each $x\in I$, $v=\sigma_x(y)$. Then,
\begin{align*}
    \widetilde{\mathscr{C}}_{n}^{T}[f(x,y),\Omega]&=\sum_{k=0}^n\sum_{j=0}^{n_k}\widetilde{F}\left(\frac{k}{n},\frac{j}{n_k};a,b \right)\, \widetilde{p}_{n,k}(u;I)\,\widetilde{p}_{n_k,j}(v;[\phi_1,\phi_2])\\
    &=\widetilde{\mathscr{B}}_n[(f\circ T)(u,v),\Omega].
\end{align*}
From Theorem \ref{approx_Stancu}, we have $\widetilde{\mathscr{B}}_n[(f\circ T)(u,v),\Omega]=\widetilde{\mathscr{B}}_n[f(x,y),\Omega]$ converges uniformly to $f(x,y)$. Hence, $\widetilde{\mathscr{C}}_{n}^{T}[f(x,y),\Omega]$ converges uniformly to $f(x,y)$.
\end{proof}

\bigskip

Now, we study shifted Bernstein-type operators defined on each quadrant of $\mathbf{B}^2$, denoted by $B_i$ for $i=1,2,3,4$. We will choose $T$ and $n_k$ such that, for any function, the approximation given by Bernstein-type operators on each quadrant is a polynomial.

\medskip

\noindent
(i) For $x\in [0,1]$, let $\tau(x)=x^2$ and, for each fixed value of $x$, let $\sigma_x(y)=y^2/\sqrt{1-x^2}$. Let $n_k=n-k$, $\phi_1(x)=0$, and $\phi_2(x)=\sqrt{1-x^2}$. Then,
$$
\widetilde{p}_{n,k}(x^2;[0,1])=\binom{n}{k}x^{2\,k}\,(1-x^2)^{n-k},
$$
$$
\widetilde{p}_{n-k,j}(\sigma_x(y);[\phi_1,\phi_2])=\frac{1}{(1-x^2)^{n-k}}\binom{n-k}{j}y^{2\,j}\,(1-x^2-y^2)^{n-k-j},
$$
$$
\widetilde{F}^{T}(u,v;B_1)\,=\,f\left(\sqrt{u},\,\sqrt{(1-u)\,v}\right),
$$
where $B_1=\{(x,y)\in \mathbb{R}^2:\, x^2+y^2\leqslant 1,\, x,y\geqslant 0 \}$. Then,
\begin{equation*}
\widetilde{\mathscr{C}}_{n}^{T}[f(x,y),B_1]=\sum_{k=0}^n\sum_{j=0}^{n-k}f\left(\sqrt{\frac{k}{n}},\,\sqrt{\frac{j}{n}}\right)\,\binom{n}{k\ \ j}x^{2k}y^{2j}(1-x^2-y^2)^{n-k-j}.
\end{equation*}

\bigskip

\noindent
(ii) For $x\in [-1,0]$, let $\tau(x) = -x^2$ and, for each fixed value of $x$, let $\sigma_x(y)=y^2/\sqrt{1-x^2}$. Let $n_k=k$, $\phi_1(x)=0$, and $\phi_2(x)=\sqrt{1-x^2}$. Then,
$$
\widetilde{p}_{n,k}(-x^2;[-1,0])=\binom{n}{k}(1-x^2)^k\,x^{2n-2k},
$$
$$
\widetilde{p}_{k,j}(\sigma_x(y);[\phi_1,\phi_2])=\frac{1}{(1-x^2)^{k}}\binom{k}{j}y^{2j}\left(1-x^2-y^2\right)^{k-j},
$$
$$
\widetilde{F}^{T}(u,v;B_2)\,=\,f(-\sqrt{1-u},\,\sqrt{u\,v}),
$$
where $B_2=\{(x,y)\in \mathbb{R}^2:\, x^2+y^2\leqslant 1,\, x\leqslant 0, \ y\geqslant 0 \}$. Then,
\begin{equation*}
\widetilde{\mathscr{C}}_{n}^{T}[f(x,y),B_2]=\sum_{k=0}^n\sum_{j=0}^{k}f\left(-\sqrt{1-\frac{k}{n}},\,\sqrt{\frac{j}{n}}\right)\,\binom{n}{k}\binom{k}{j}x^{2n-2k}y^{2j}(1-x^2-y^2)^{k-j}.
\end{equation*}

\bigskip

\noindent \textit{(iii)} For $x\in [-1,0]$, let $\tau(x) = -x^2$ and, for each fixed value of $x$, let $\sigma_x(y)=-y^2/\sqrt{1-x^2}$. Let $n_k=k$, $\phi_1(x)=-\sqrt{1-x^2}$, and $\phi_2(x)=0$. Then,
$$
\widetilde{p}_{n,k}(-x^2;[-1,0])=\binom{n}{k}(1-x^2)^k\,x^{2n-2k},
$$
$$
\widetilde{p}_{k,j}(\sigma_x(y);[\phi_1,\phi_2])=\frac{1}{(1-x^2)^k}\binom{k}{j}\left(1-x^2-y^2\right)^jy^{2k-2j},
$$
$$
\widetilde{F}^{T}(u,v;B_3)\,=\,f\left(-\sqrt{1-u},-\sqrt{u\,(1-v)}\right),
$$
where $B_3=\{(x,y)\in \mathbb{R}^2:\, x^2+y^2\leqslant 1,\, x,y\leqslant 0\}$. Then,
\begin{equation*}
\widetilde{\mathscr{C}}_{n}^{T}[f(x,y),B_3]=\sum_{k=0}^n\sum_{j=0}^{k}f\left(-\sqrt{1-\frac{k}{n}},\,-\sqrt{\frac{k-j}{n}}\right)\,\binom{n}{k}\binom{k}{j}x^{2n-2k}y^{2k-2j}(1-x^2-y^2)^{j}.
\end{equation*}

\bigskip

\noindent 
(iv) For $x\in [0,1]$, let $\tau(x) = x^2$ and, for each fixed value of $x$, let $\sigma_x(y)=-y^2/\sqrt{1-x^2}$. Let $n_k=n-k$, $\phi_1(x)=-\sqrt{1-x^2}$, and $\phi_2(x)=0$. Then,
$$
\widetilde{p}_{n,k}(x^2;[0,1])=\binom{n}{k}x^{2\,k}\,(1-x^2)^{n-k},
$$
$$
\widetilde{p}_{n-k,j}(\sigma_x(y);[\phi_1,\phi_2])=\frac{1}{(1-x^2)^{n-k}}\binom{n-k}{j}(1-x^2-y^2)^j\,y^{2n-2k-2j},
$$
$$
\widetilde{F}^{T}(u,v;B_4)\,=\,f\left(\sqrt{u},\,-\sqrt{(1-u)\,(1-v)}\right),
$$
where $B_4=\{(x,y)\in \mathbb{R}^2:\, x^2+y^2\leqslant 1,\, x\geqslant 0, \ y\leqslant 0 \}$. Then,
\begin{equation*}
\widetilde{\mathscr{C}}_{n}^{T}[f(x,y),B_4]=\sum_{k=0}^n\sum_{j=0}^{n-k}f\left(\sqrt{\frac{k}{n}},\,-\sqrt{1-\frac{k+j}{n}}\right)\,\binom{n}{k\ \ j}x^{2k}(1-x^2-y^2)^j\,y^{2n-2k-2j}.
\end{equation*}

Similar to \eqref{piecewise}, we can define a piece-wise Bernstein-type operator on $\mathbf{B}^2$ as follows:
\begin{equation}\label{piecewise2}
    \overline{\mathscr{C}}_n[f(x,y), \mathbf{B}^2]\,=\,\left\{ \begin{array}{cc}
\widetilde{\mathscr{C}}^{T}_{n}[f(x,y),B_1],     &  (x,y)\in B_1, \\
\widetilde{\mathscr{C}}^{T}_{n}[f(x,y),B_2],     &  (x,y)\in B_2, \\
\widetilde{\mathscr{C}}^{T}_{n}[f(x,y),B_3],     &  (x,y)\in B_3, \\
\widetilde{\mathscr{C}}^{T}_{n}[f(x,y),B_4],     &  (x,y)\in B_4.
\end{array}
\right.
\end{equation}
The proof of the following proposition is similar to that of Proposition \ref{continuity-1}.

\begin{prop}
For any function $f$ on $\mathbf{B}^2$,  $\overline{\mathscr{C}}_n[f(x,y), \mathbf{B}^2]$ is a continuous function on $\mathbf{B}^2$.
\end{prop}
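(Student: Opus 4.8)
The plan is to follow the template of Proposition~\ref{continuity-1}. The first observation is that each of the four pieces $\widetilde{\mathscr{C}}^{T}_{n}[f(x,y),B_i]$, $i=1,2,3,4$, is---by the explicit formulas obtained in (i)--(iv)---a genuine polynomial in $x$ and $y$, and hence continuous on the whole of $\mathbb{R}^2$; in particular it is continuous on the closed quadrant $B_i$. Thus $\overline{\mathscr{C}}_n[f(x,y),\mathbf{B}^2]$ is continuous at every interior point of each quadrant, and the only thing that requires proof is that adjacent pieces take the same values along the two coordinate axes, where neighbouring quadrants meet.

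To carry this out I would restrict each operator to the relevant axis and exploit that the monomial factors annihilate all but one value of the outer index $k$. For instance, on the line $x=0$ the factor $x^{2k}$ in $\widetilde{\mathscr{C}}^{T}_{n}[f,B_1]$ forces $k=0$, while the factor $x^{2n-2k}$ in $\widetilde{\mathscr{C}}^{T}_{n}[f,B_2]$ forces $k=n$; using $\binom{n}{0\ \ j}=\binom{n}{n}\binom{n}{j}=\binom{n}{j}$ both restrictions collapse to the single univariate sum
\[
\sum_{j=0}^{n} f\!\left(0,\sqrt{\tfrac{j}{n}}\right)\binom{n}{j}\,y^{2j}\,(1-y^2)^{n-j},
\]
so that $\widetilde{\mathscr{C}}^{T}_{n}[f,B_1]$ and $\widetilde{\mathscr{C}}^{T}_{n}[f,B_2]$ agree for $y\geqslant 0$. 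The analogous collapse on $y=0$ (now driven by the powers of $y$) reduces $\widetilde{\mathscr{C}}^{T}_{n}[f,B_1]$ and $\widetilde{\mathscr{C}}^{T}_{n}[f,B_4]$ to a common sum in $x^2$ for $x\geqslant 0$, and likewise $\widetilde{\mathscr{C}}^{T}_{n}[f,B_2]$ and $\widetilde{\mathscr{C}}^{T}_{n}[f,B_3]$ for $x\leqslant 0$.

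The step that I expect to need the most care is the pairing of $\widetilde{\mathscr{C}}^{T}_{n}[f,B_3]$ and $\widetilde{\mathscr{C}}^{T}_{n}[f,B_4]$ along the negative $y$-axis, because these two pieces are built from genuinely different data ($\tau(x)=-x^2$ with $n_k=k$ on $B_3$, against $\tau(x)=x^2$ with $n_k=n-k$ on $B_4$), so it is not a priori clear that the sample points $\widetilde{F}^{T}(k/n,j/n_k)$ and the resulting coefficients coincide. The key is to check that, after the collapse (to $k=n$ on $B_3$ and to $k=0$ on $B_4$), the surviving node second coordinates match via the identity $\sqrt{1-\tfrac{j}{n}}=\sqrt{\tfrac{n-j}{n}}$ while the first coordinates both vanish because $-\sqrt{1-\tfrac{n}{n}}=0$; both restrictions then reduce to
\[
\sum_{j=0}^{n} f\!\left(0,-\sqrt{\tfrac{n-j}{n}}\right)\binom{n}{j}\,y^{2n-2j}\,(1-y^2)^{j},
\]
giving $\widetilde{\mathscr{C}}^{T}_{n}[f,B_3]=\widetilde{\mathscr{C}}^{T}_{n}[f,B_4]$ for $y\leqslant 0$. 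Finally, since the origin lies on both axes, the four pieces are automatically forced to the common value $f(0,0)$ there, so all the matchings are mutually consistent. Gluing the four continuous pieces along the axes then yields the continuity of $\overline{\mathscr{C}}_n[f(x,y),\mathbf{B}^2]$ on $\mathbf{B}^2$.
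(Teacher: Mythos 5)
Your proposal is correct and follows essentially the same route as the paper, which simply declares the proof ``similar to that of Proposition~\ref{continuity-1}'': continuity of each polynomial piece on its closed quadrant, plus verification that the restrictions of adjacent pieces to the coordinate axes coincide because the monomial factors kill all but one value of the outer index. Your explicit check of the $B_3$/$B_4$ matching on the negative $y$-axis (collapse to $k=n$ versus $k=0$, with the nodes agreeing via $\sqrt{1-j/n}=\sqrt{(n-j)/n}$) is accurate and in fact supplies more detail than the paper records.
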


\section{Numerical experiments}\label{secNumExp}

In this section, we present numerical experiments where we compare the shifted Bernstein-Stancu operator $\widetilde{\mathscr{B}}_n$ on $\mathbf{B}^2$,  and the shifted Bernstein-type operator $\overline{\mathscr{C}}_{n}$ in \eqref{piecewise2}. To do this, we consider different functions defined on $\mathbf{B}^2$. For each function $f(x,y)$, we compute $\widetilde{\mathscr{B}}[f(x,y),\mathbf{B}^2]$ and $\overline{\mathscr{C}}_{n}[f(x,y),\mathbf{B}^2]$. For $\widetilde{\mathscr{B}}_n[f(x,y),\mathbf{B}^2]$, we get $(n+1)^2$ mesh points $(x_i,y_i)$. We set $z_i=f(x_i,y_i)$, $1\leqslant i\leqslant (n+1)^2$ and $\hat{z}_i$ equal to the value of $\widetilde{\mathscr{B}}_n[f(x,y),\mathbf{B}^2]$ at the respective mesh point, and compute the root mean square error (RMSE) as follows:
$$
\operatorname{RMSE}(f,\widetilde{\mathscr{B}}_n)\,=\,\sqrt{\sum_{i=1}^{(n+1)^2}\frac{(z_i-\hat{z}_i)^2}{(n+1)^2}}.
$$
Similarly, for $\overline{\mathscr{C}}_n[f(x,y),\mathbf{B}^2]$, we get $2\,n\,(n+1)$ mesh points $(\bar{x}_j,\bar{y}_j)$. We set $w_j=f(\bar{x}_j,\bar{y}_j)$, $1\leqslant j\leqslant 2\,n\,(n+1)$ and $\bar{w}_j$ equal to the value of $\overline{\mathscr{C}}_n[f(x,y),\mathbf{B}^2]$ at the respective mesh point, and compute the RMSE as follows:
$$
\operatorname{RMSE}(f,\overline{\mathscr{C}}_n)\,=\,\sqrt{\sum_{j=1}^{2\,n\,(n+1)}\frac{(w_j-\bar{w}_j)^2}{2\,n\,(n+1)}}.
$$
In each case, we plot the RSME for increasing values of $n$ using Mathematica.

For $\overline{\mathscr{C}}_n[f(x,y), \mathbf{B}^2]$, we represent the approximation on each quadrant using different colors as shown in Figure \ref{numeexp1}. We take $n = 100$, then the mesh for each quadrant consists $20200$ points.

For $\widetilde{\mathscr{B}}_n[f(x,y),\mathbf{B}^2]$, we take $n=200$. Then the mesh for all the unit disk consists of 40401 points. 

We note that the operator $\overline{\mathscr{C}}_n$ requires two evaluations at the mesh points on the common boundaries of two adjacent quadrants. Therefore, the operator $\widetilde{\mathscr{B}}_n$ needs a smaller number of evaluations than the operator $\overline{\mathscr{C}}_n$ since, for a fixed $n$, $\widetilde{\mathscr{B}}_n$ and $\overline{\mathscr{C}}_n$ are composed of $(n+1)^2$ and $2\,(n+1)\,(n+2)$ evaluations,respectively.  

\begin{center}
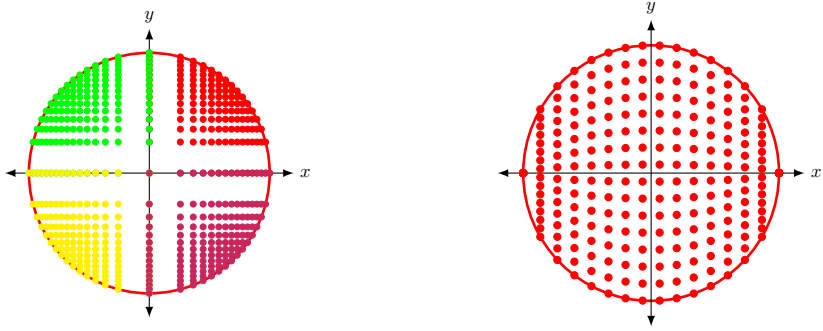
\begin{figure}[h!]
\begin{tabular}{cc}
\begin{minipage}{0.49\textwidth}
\begin{tikzpicture}[scale=1.6]
\draw[red,line width=1pt] (1,0) arc(0:360:1);
\draw[latex-latex] (0,-1.2)--(0,1.2) node[scale=0.7,above]{$y$};
\draw[latex-latex] (-1.2,0)--(1.2,0)node[scale=0.7,right]{$x$};
\foreach \n in {15}
{
\foreach \k in {0,1,2,...,\n}
{
\pgfmathsetmacro{\nk}{\n-\k}
\foreach \j in {0,...,\nk}
{
\draw[red,fill=red] ({sqrt(\k/\n)},{sqrt(\j/(\n))}) circle(0.7pt) ; 
\draw[green,fill=green] ({-sqrt(\k/\n)},{sqrt(\j/(\n))}) circle(0.7pt); 
\draw[yellow,fill=yellow] ({-sqrt(\k/\n)},{-sqrt(\j/(\n))}) circle(0.7pt); 
\draw[purple!85,fill=purple!85] ({sqrt(\k/\n)},{-sqrt(\j/(\n))}) circle(0.7pt); 
}
}
}
\end{tikzpicture}
\end{minipage}&
\begin{minipage}{0.49\textwidth}
\begin{tikzpicture}[scale=1.7]
\draw[red,line width=1pt] (1,0) arc(0:360:1);
\draw[latex-latex] (0,-1.2)--(0,1.2) node[scale=0.7,above]{$y$};
\draw[latex-latex] (-1.2,0)--(1.2,0)node[scale=0.7,right]{$x$};
\foreach \n in {15}
{
\pgfmathsetmacro{\nk}{\n}
\foreach \k in {0,1,2,...,\nk}
{
\foreach \j in {0,1,2,...,\n}
{
\draw[red,fill=red] ({(2*\k-\n)/\n},{2*sqrt(\k*(\n-\k))*(\n-2*\j)/(\n)^2}) circle(0.8pt); 
}
}
}
\end{tikzpicture}
\end{minipage}
\end{tabular}
\caption{Left: Mesh for $\overline{\mathscr{C}}_n$ with $n=15$. Color code for disk quadrants ($B_1$ red; $B_2$ green; $B_3$ yellow; $B_4$ purple). Right: Mesh for $\widetilde{\mathscr{B}}_n$ with $n=15$. } 
    \label{numeexp1}
\end{figure}
\end{center}

\subsection{Example 1}

First, we consider the continuous function 
$$
f(x,y)=x\sin(5x-6y)+y, \quad (x,y)\in \mathbf{B}^2.
$$
The graph of $f(x,y)$ is shown in Figure \ref{triga}, and the approximations $\overline{\mathscr{C}}_{n}[f(x,y),\mathbf{B}^2]$  and $\widetilde{\mathscr{B}}[f(x,y),\mathbf{B}^2]$  are shown in Figure \ref{aprox1}. We list the RSME of both approximations for different values of $n$ in Table \ref{trigaterror} and plot them together in Figure \ref{trigaperror}, where the characteristic slow convergence inherited from the univariate Bernstein operators is observed.\\

\begin{figure}[h!]
\centering
\includegraphics[scale=0.2,trim=5cm 15cm 8cm 10cm, clip]{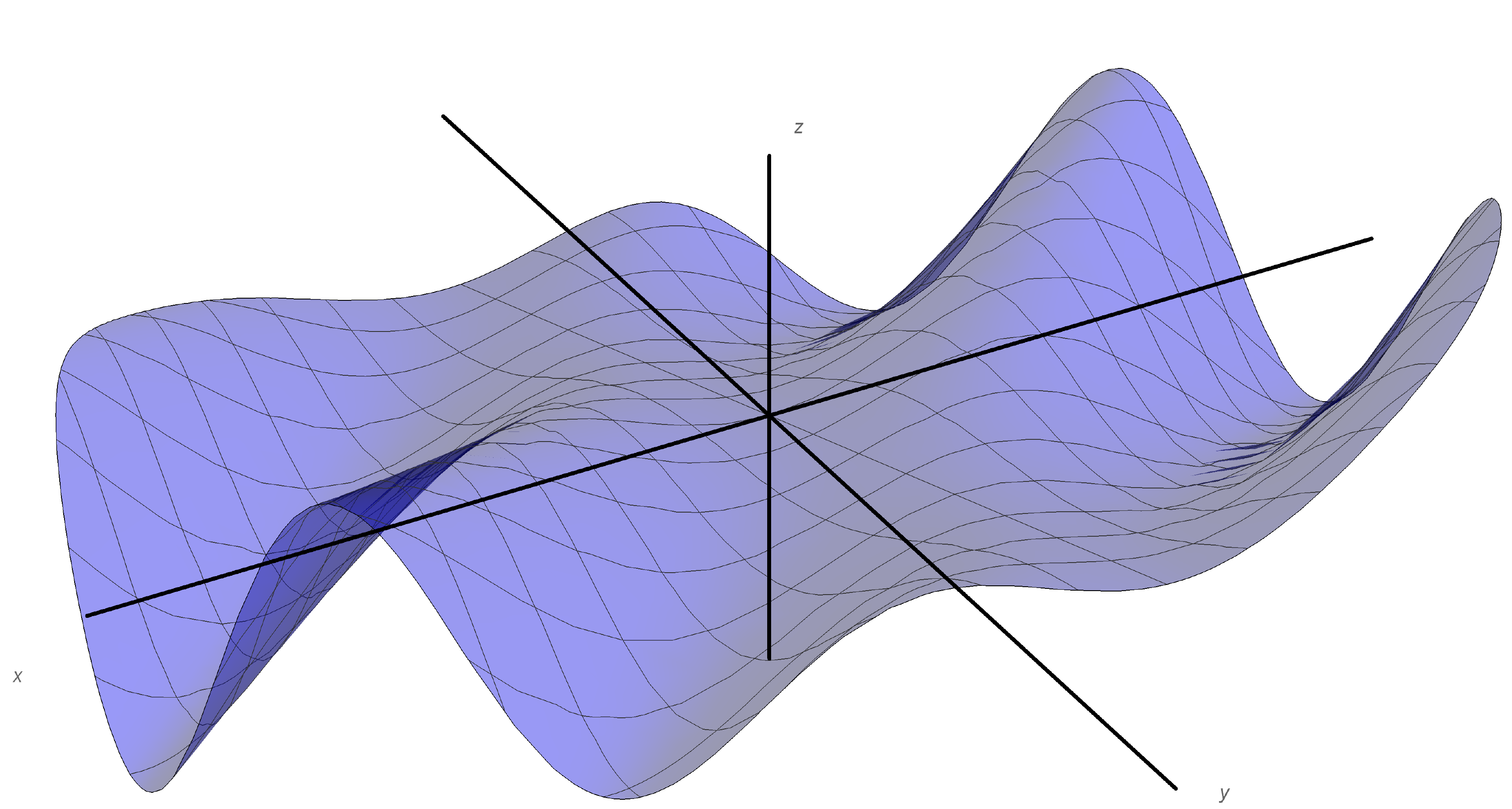}
\caption{Graph of $f(x,y)=x\sin(5x-6y)+y$ on $\mathbf{B}^2$.}
\label{triga}
\end{figure}

\begin{center}
\begin{figure}[h!]
\begin{tabular}{cc}
\begin{minipage}{0.49\textwidth}
\includegraphics[scale=0.26,trim=2cm 9cm 2cm 2cm, clip]{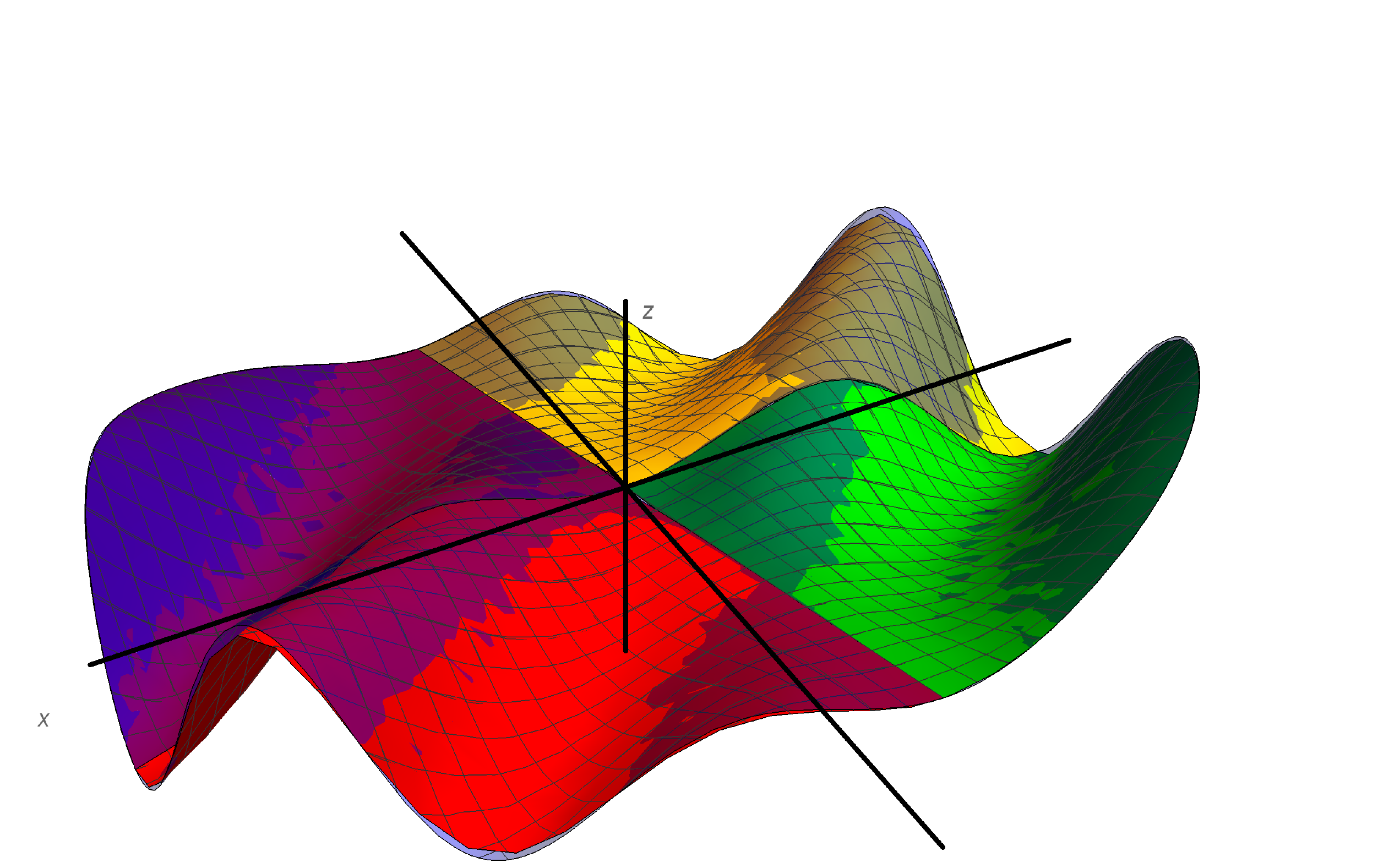} 
\end{minipage}&
\begin{minipage}{0.49\textwidth}
\includegraphics[scale=0.26,trim=17cm 8cm 12cm 2cm, clip]{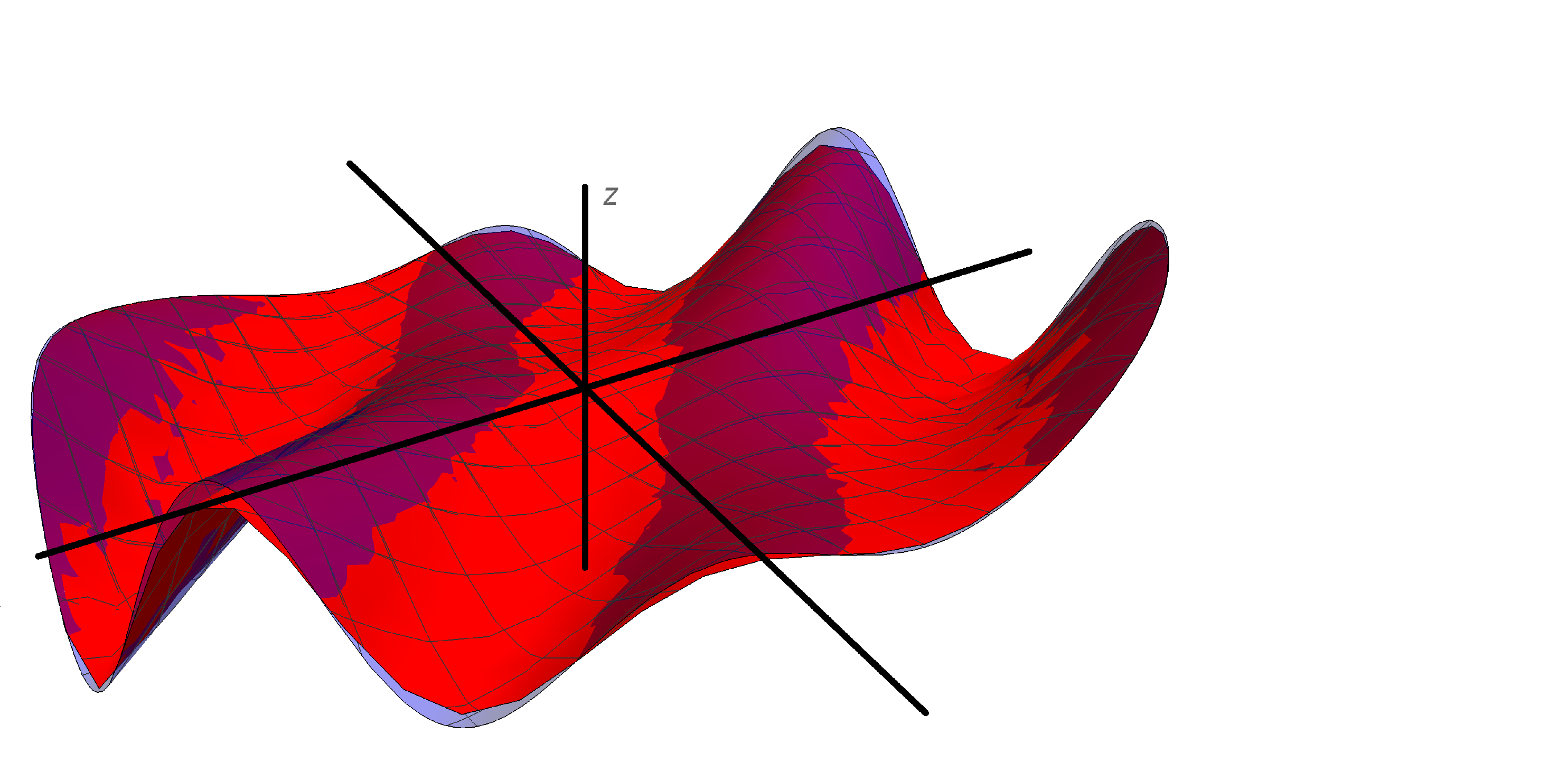} 
\end{minipage}
\end{tabular}
\caption{Approximations overlapped with the graph of $f(x,y)$. Left: $\overline{\mathscr{C}}_n[f(x,y),\mathbf{B}^2]$. Right: $\widetilde{\mathscr{B}}_n[f(x,y),\mathbf{B}^2]$.  }
\label{aprox1}
\end{figure}
\end{center}

\begin{minipage}{\textwidth}

  \begin{minipage}{0.5\textwidth}
    \centering
    $
\begin{array}{|c|c|c|}
\hline &&\\[-0.3cm]
n&\overline{\mathscr{C}}_{n}[f(x,y),\mathbf{B}^2]&\widetilde{\mathscr{B}}_n[f(x,y),\mathbf{B}^2]\\
\hline 
 10 & 0.191411 & 0.30623 \\
 20 & 0.117881 & 0.209091 \\
 30 & 0.0860663 & 0.16182 \\
 40 & 0.0682511 & 0.132416 \\
 50 & 0.0568288 & 0.112151 \\
 60 & 0.0488602 & 0.0972969 \\
 70 & 0.0429694 & 0.0859318 \\
 80 & 0.0384267 & 0.0769527 \\
\hline 
\end{array}
$
    \captionof{table}{RMSE for diffe\-rent values of $n$.}
    \label{trigaterror}
  \end{minipage}
\hspace{1cm}
  \begin{minipage}{0.5\textwidth}
    \centering
\begin{tikzpicture}[scale=0.7]
    \begin{axis}[
axis lines=middle, ymin=0,xmin=0, xlabel=$n$,
        ylabel=RMSE,x label style={at={(1,0)},right=5pt},
  y label style={at={(0,1)},above=5pt},xtick={0,10,...,80},
     ytick={0,0.05,...,0.35},yticklabel style={
        /pgf/number format/fixed,
        /pgf/number format/precision=2
},]
    \addplot[only marks,mark=*,red] plot coordinates {
(10 ,0.191411 )
( 20 , 0.117881 )
( 30 , 0.0860663 )
( 40 , 0.0682511 )
( 50 , 0.0568288 )
( 60 , 0.0488602 )
( 70 , 0.0429694 )
( 80 , 0.0384267 )
    };
    \addlegendentry{$\overline{\mathscr{C}}_{n}[f(x,y),\mathbf{B}^2]$}

    \addplot[only marks,color=blue,mark=o]
        plot coordinates {
( 10,0.30623 )
( 20 ,0.209091 )
( 30 ,0.16182 )
( 40 ,0.132416 )
( 50 ,0.112151 )
( 60 ,0.0972969 )
( 70 ,0.0859318 )
( 80 ,0.0769527 )
        };
    \addlegendentry{$\widetilde{\mathscr{B}}_n[f(x,y),\mathbf{B}^2]$}
    \end{axis}
    \end{tikzpicture}
      
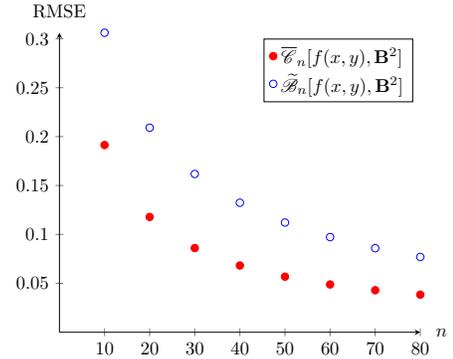
\captionof{figure}{Plot of RMSE in Table \ref{trigaterror}. }
      \label{trigaperror}
    \end{minipage}
  \end{minipage}
  
  \bigskip
  
\subsection{Example 2}

Now, we consider the continuous periodic function 
$$
g(x,y)=\sin(10x+y), \quad (x,y)\in\mathbf{B}^2.
$$
Its graph is shown in Figure \ref{trigb}. It can be observed in Figure \ref{aprox3} that the approximation error for both operators is larger at the maximum and minimum values of the function. Table \ref{trigbterror} and Figure \ref{trigbperror} contain further evidence of this larger error. Moreover, in comparison with the previous example, it seems that the rate convergence of $\overline{\mathscr{C}}_n[g(x,y),\mathbf{B}^2]$ is significantly faster than the rate of convergence of $\widetilde{\mathscr{B}}_n[g(x,y),\mathbf{B}^2]$.

\begin{figure}[h!]
\centering
\includegraphics[scale=0.3,trim=1cm 2cm 1cm 1cm, clip]{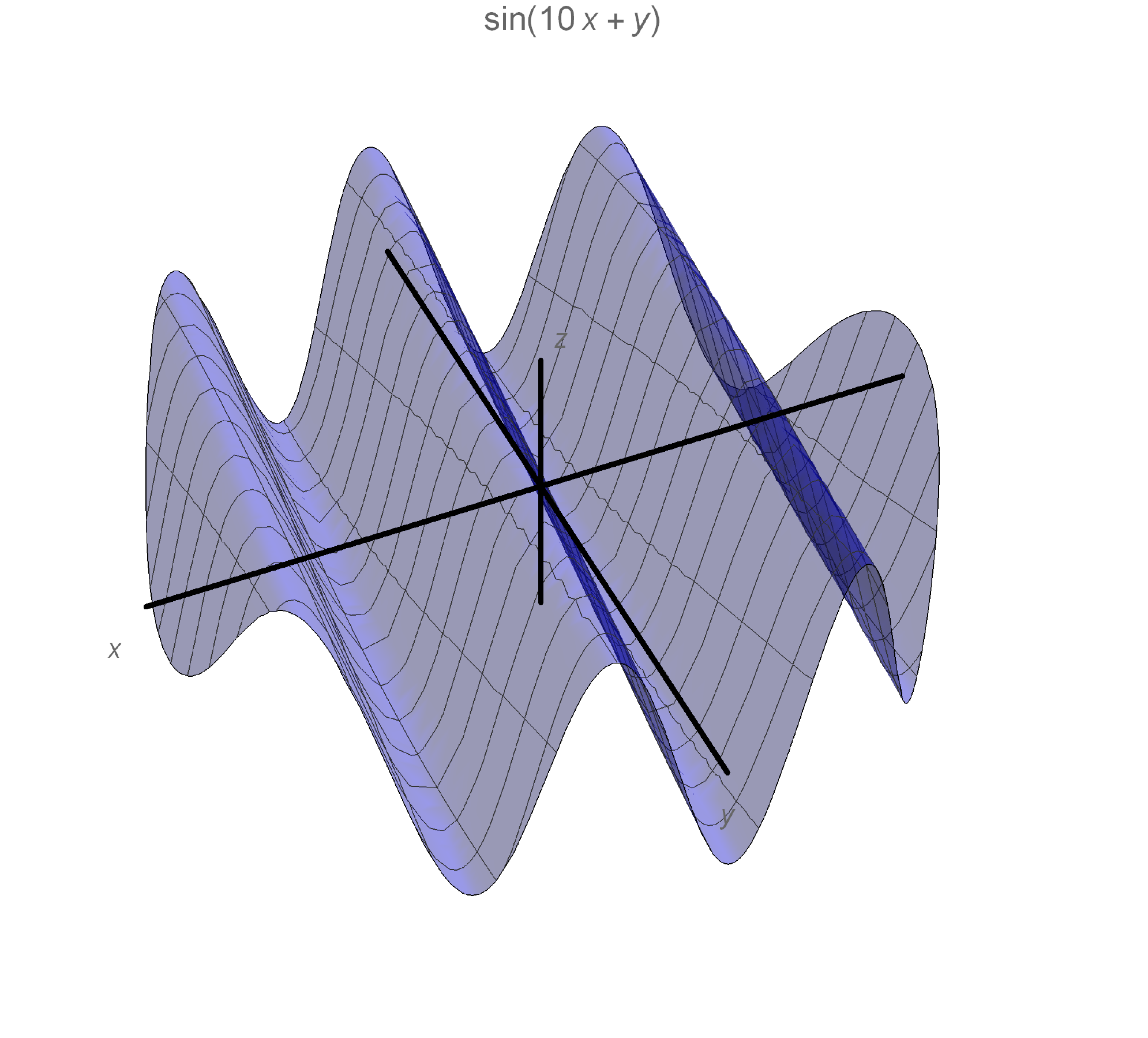}
\caption{Graph of $g(x,y)=\sin(10x+y)$ on $\mathbf{B}^2$.}
\label{trigb}
\end{figure}

\begin{center}
\begin{figure}[h!]
\begin{tabular}{cc}
\begin{minipage}{0.49\textwidth}
\includegraphics[scale=0.13,trim=2cm 9cm 2cm 2cm, clip]{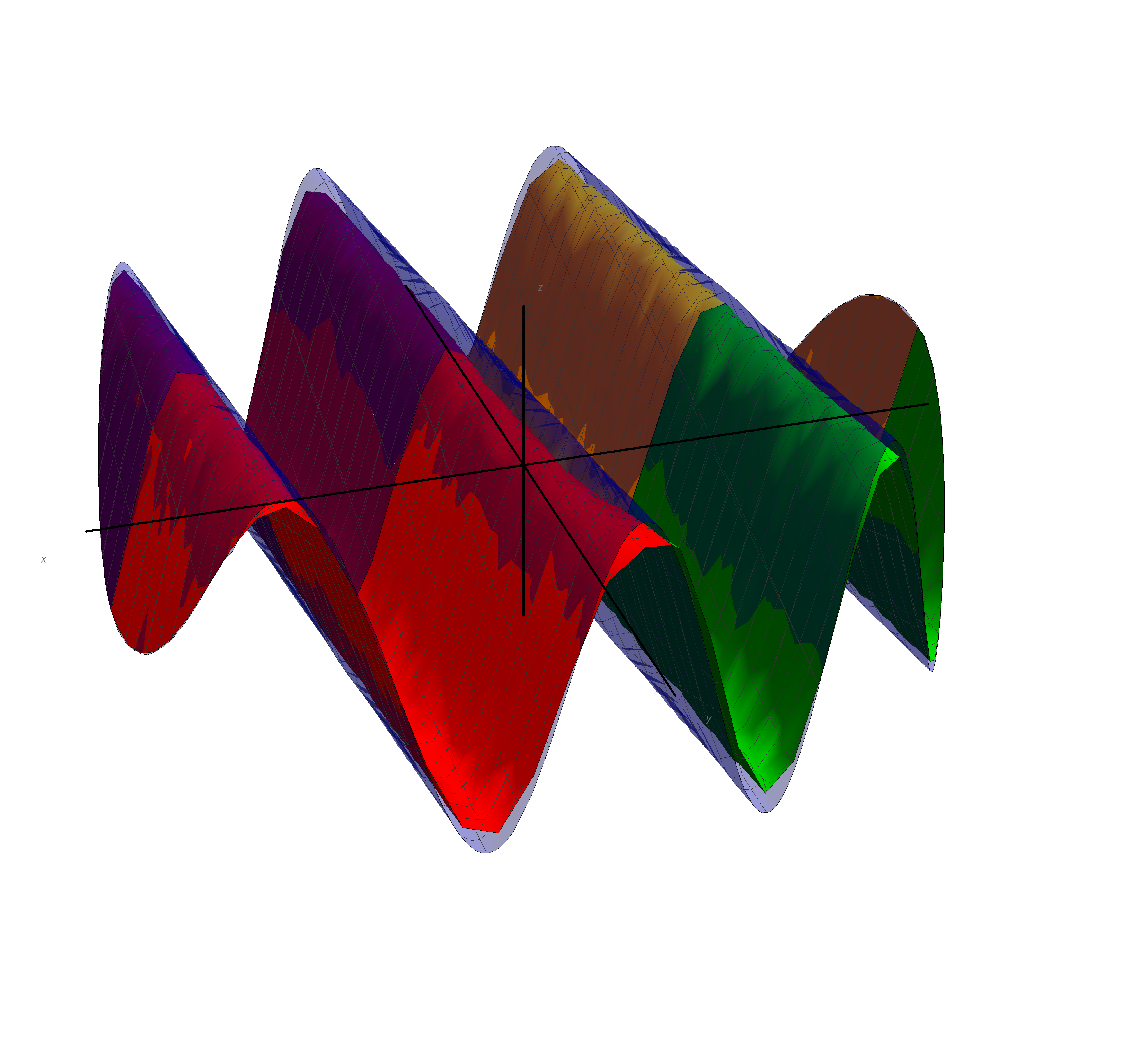} 
\end{minipage}&
\begin{minipage}{0.49\textwidth}
\includegraphics[scale=0.25,trim=8cm 1cm 2cm 0cm, clip]{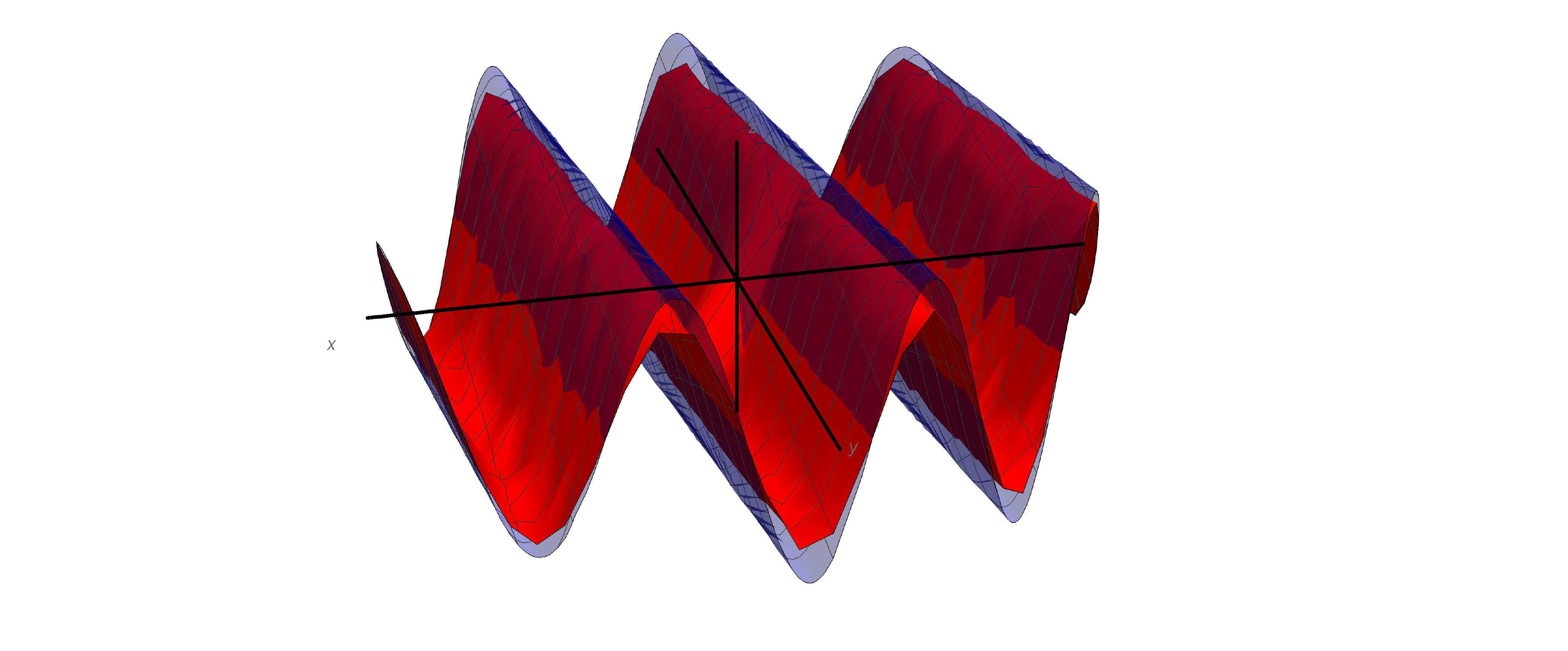} 
\end{minipage}
\end{tabular}
\caption{Approximations overlapped with the graph of $g(x,y)$. Left: $\overline{\mathscr{C}}_n[g(x,y),\mathbf{B}^2]$. Right: $\widetilde{\mathscr{B}}_n[g(x,y),\mathbf{B}^2]$.}
\label{aprox3}
\end{figure}
\end{center}

\begin{minipage}{\textwidth}
\hspace{-2.0cm}
  \begin{minipage}{0.5\textwidth}
    \centering
    $
\begin{array}{|c|c|c|}
\hline &&\\[-0.3cm]
n&\overline{\mathscr{C}}_{n}[g(x,y),\mathbf{B}^2]&\widetilde{\mathscr{B}}_n[g(x,y),\mathbf{B}^2]\\
\hline 
 10 & 0.535344 & 0.700146 \\
 20 & 0.366915 & 0.613427 \\
 30 & 0.278477 & 0.526227 \\
 40 & 0.225091 & 0.454904 \\
 50 & 0.189454 & 0.398559 \\
 60 & 0.163967 & 0.353775 \\
 70 & 0.144812 & 0.317628 \\
 80 & 0.129872 & 0.287968\\
\hline 
\end{array}
$
    \captionof{table}{RMSE for diffe\-rent values of $n$.}
    \label{trigbterror}
  \end{minipage}
\hspace{0cm}
  \begin{minipage}{0.5\textwidth}
    \centering
\begin{tikzpicture}[scale=0.7]
    \begin{axis}[
axis lines=middle, ymin=0,xmin=0, xlabel=$n$,
        ylabel=RMSE,x label style={at={(1,0)},right=5pt},
  y label style={at={(0,1)},above=5pt},xtick={0,10,...,80},
     ytick={0,0.1,...,0.7},yticklabel style={
        /pgf/number format/fixed,
        /pgf/number format/precision=2
},]
    \addplot[only marks,mark=*,red] plot coordinates {
 (10,0.535344)
 (20,0.366915 )
 (30, 0.278477 )
 (40 , 0.225091 )
 (50 , 0.189454 )
 (60 , 0.163967 )
 (70 , 0.144812 )
 (80 , 0.129872)
    };
    \addlegendentry{$\overline{\mathscr{C}}_{n}[g(x,y),\mathbf{B}^2]$}

    \addplot[only marks,color=blue,mark=o]
        plot coordinates {
(10 ,0.700146 )
 (20 ,0.613427 )
 (30, 0.526227 )
 (40 , 0.454904 )
 (50 , 0.398559 )
 (60 , 0.353775 )
 (70 , 0.317628 )
 (80 , 0.287968)
        };
    \addlegendentry{$\widetilde{\mathscr{B}}_n[g(x,y),\mathbf{B}^2]$}
    \end{axis}
    \end{tikzpicture}
      
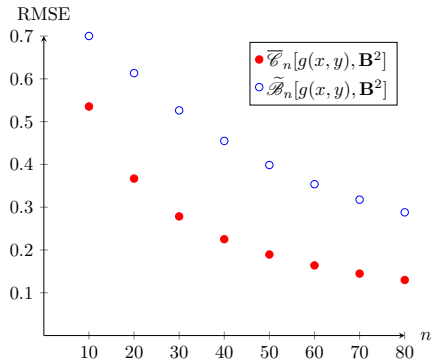
\captionof{figure}{Plot of RMSE in Table \ref{trigbterror}.}
      \label{trigbperror}
    \end{minipage}
  \end{minipage}

\bigskip

\subsection{Example 3}

Here, we consider the continuous function 
$$
h(x,y)=e^{x^2-y^2}-xy, \quad (x,y)\in \mathbf{B}^2,
$$ 
(see Figure \ref{exp}). Both approximations are shown in Figure \ref{aprox4}, and their respective RSME are listed in Table \ref{expterror} and plotted in Figure \ref{expperror}. Observe that, in this case, the RSME for both approximations are significantly smaller than in the previous examples. Moreover, based on Figure \ref{expperror}, it seems that for sufficiently large values of $n$, the rate of convergence of both approximations is considerably similar to each other.

\begin{figure}[h!]
\centering
\includegraphics[scale=0.3,trim=3cm 3cm 4cm 1cm, clip]{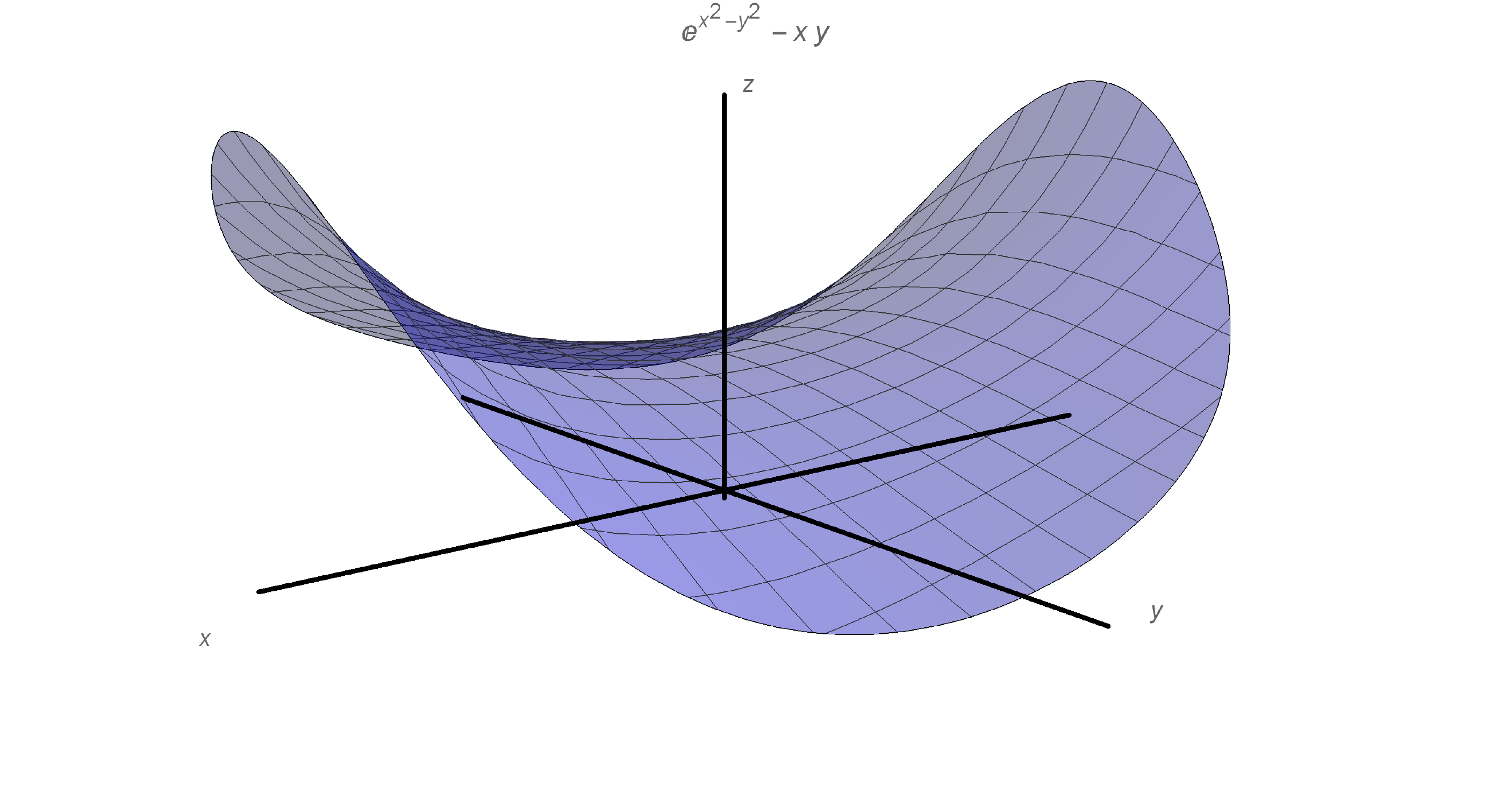}
\caption{Graph of $h(x,y)=e^{x^2-y^2}-xy$.}
\label{exp}
\end{figure}

\begin{figure}[h!]
\centering
\begin{tabular}{cc}
\begin{minipage}{0.49\textwidth}
\includegraphics[scale=0.22,trim=5cm 0cm 5cm 0cm, clip]{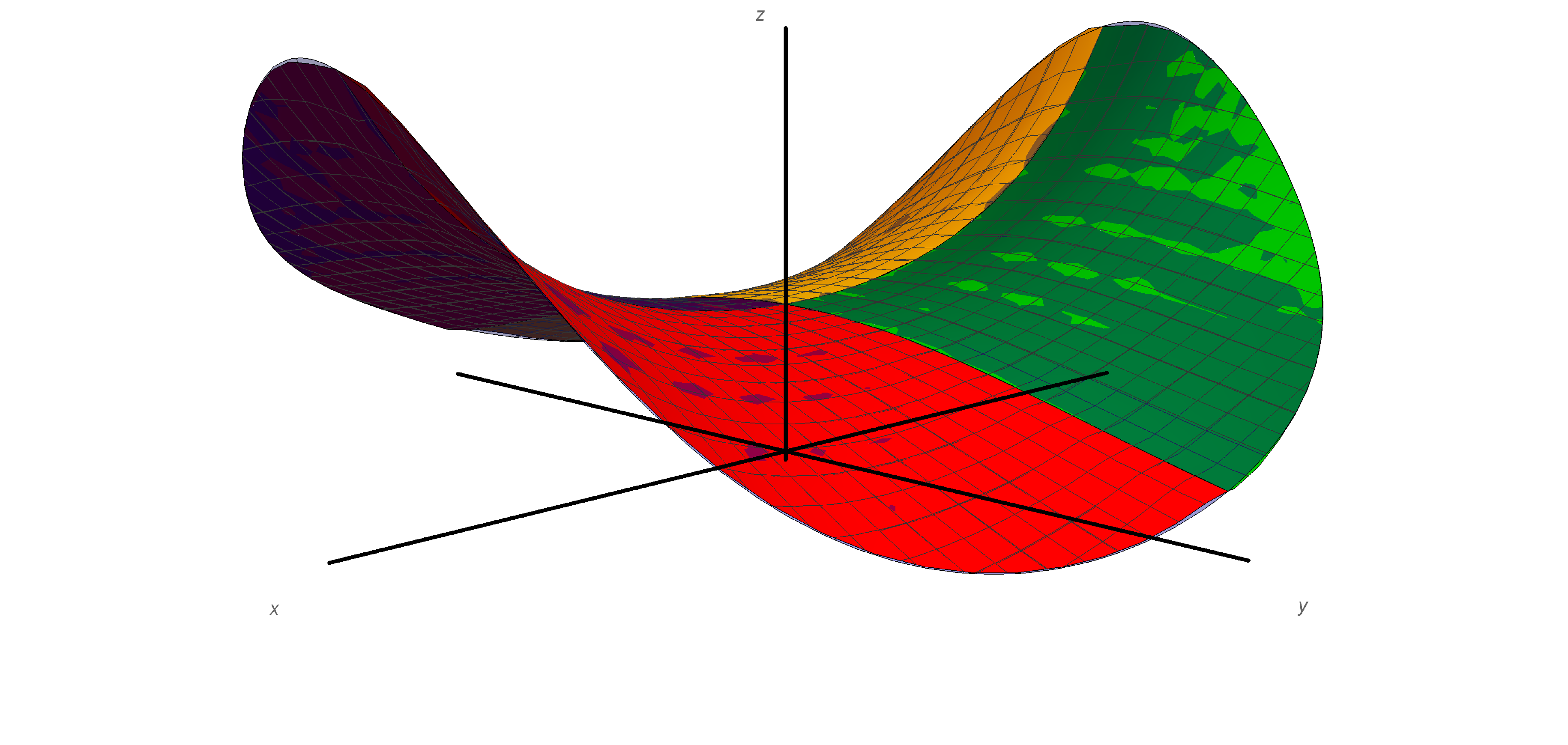} 
\end{minipage}&
\begin{minipage}{0.5\textwidth}
\includegraphics[scale=0.45,trim=2cm 0cm 2cm 0cm, clip]{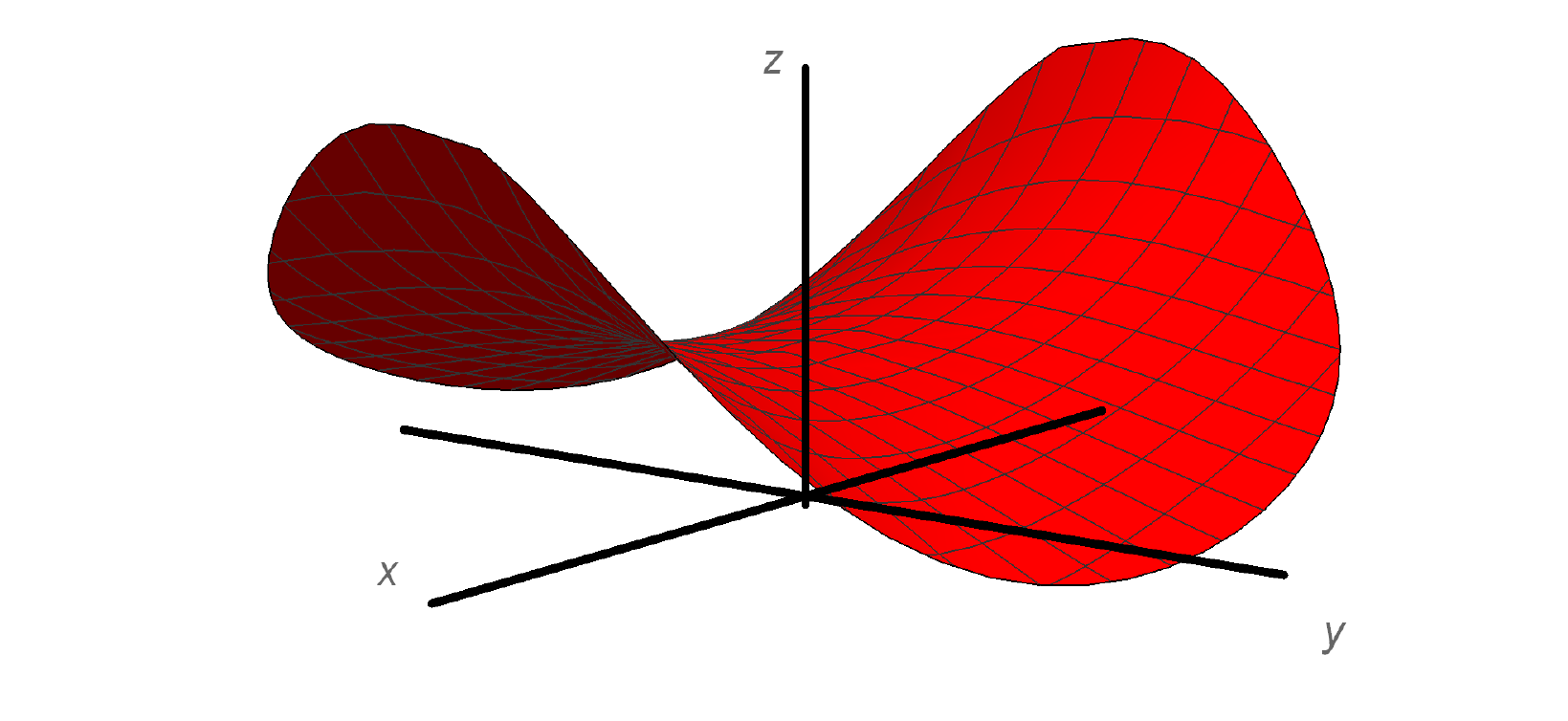} 
\end{minipage}
\end{tabular}
\caption{Approximations overlapped with the graph of $h(x,y)$. Left: $\overline{\mathscr{C}}_n[h(x,y),\mathbf{B}^2]$. Right: $\widetilde{\mathscr{B}}_n[h(x,y),\mathbf{B}^2]$. }
\label{aprox4}
\end{figure}

\bigskip

\begin{minipage}{\textwidth}
\hspace{-1.0cm}
  \begin{minipage}{0.5\textwidth}
    \centering
$
\begin{array}{|c|c|c|}
\hline &&\\[-0.3cm]
n&\overline{\mathscr{C}}_{n}[h(x,y),\mathbf{B}^2]&\widetilde{\mathscr{B}}_n[h(x,y),\mathbf{B}^2]\\
\hline  
 10 & 0.0505862 & 0.140837 \\
 20 & 0.0293585 & 0.0685387 \\
 30 & 0.0213945 & 0.0455634 \\
 40 & 0.017105 & 0.0342737 \\
 50 & 0.0143844 & 0.0275514 \\
 60 & 0.0124871 & 0.0230843 \\
 70 & 0.0110789 & 0.0198962 \\
 80 & 0.00998647 & 0.0175041 \\
\hline 
\end{array}
$
    \captionof{table}{RMSE for diffe\-rent values of $n$.}
    \label{expterror}
  \end{minipage}
\hspace{0cm}
  \begin{minipage}{0.5\textwidth}
    \centering
\begin{tikzpicture}[scale=0.7]
    \begin{axis}[
axis lines=middle, ymin=0,xmin=0, xlabel=$n$,
        ylabel=RMSE,x label style={at={(1,0)},right=5pt},
  y label style={at={(0,1)},above=5pt},xtick={0,10,...,80},
     ytick={0,0.02,...,1.20},yticklabel style={
        /pgf/number format/fixed,
        /pgf/number format/precision=2
},]
    \addplot[only marks,mark=*,red] plot coordinates {
 (10 , 0.0505862 )
 (20 , 0.0293585 )
 (30 , 0.0213945 )
 (40 , 0.017105 )
 (50 , 0.0143844 )
 (60 , 0.0124871 )
 (70 , 0.0110789 )
 (80 , 0.00998647 )
    };
    \addlegendentry{$\overline{\mathscr{C}}_{n}[h(x,y),\mathbf{B}^2]$}

    \addplot[only marks,color=blue,mark=o]
        plot coordinates {
(10,0.140837)
( 20 , 0.0685387 )
 (30 , 0.0455634 )
( 40 ,0.0342737 )
( 50 , 0.0275514 )
( 60 , 0.0230843 )
( 70 , 0.0198962 )
( 80 , 0.0175041 )
        };
    \addlegendentry{$\widetilde{\mathscr{B}}_n[h(x,y),\mathbf{B}^2]$}
    \end{axis}
    \end{tikzpicture}
      
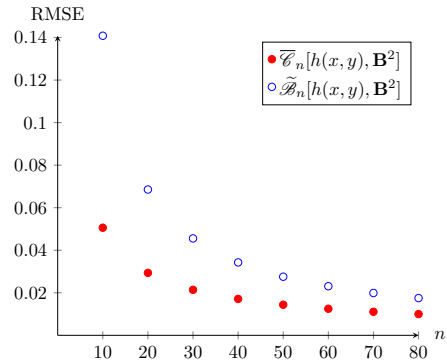
\captionof{figure}{Plot of RSME in Table \ref{expterror}.}
      \label{expperror}
    \end{minipage}
  \end{minipage}

\bigskip

\subsection{Example 4}

In this numerical example, we are interested in observing the behavior of Bernstein-type and Bernstein-Stancu operators at jump discontinuities.

Let us consider the following discontinuous function:
\begin{equation*}
\eta(x,y)=\begin{cases}
1,&\text{if\ \ } x^2+y^2<0.5,\\
0,&\text{if\ \ } 0.5\leqslant x^2+y^2\leqslant 0.8,\\
0.5,&\text{if\ \ } 0.8<x^2+y^2\leqslant 1.\\
\end{cases}\end{equation*}
The graph of $\eta(x,y)$ is shown in Figure \ref{piece} and the approximations are shown in Figure \ref{aprox4}. It is interesting to observe the behavior of the approximations at the points of jump discontinuities and, thus, we have included Figure \ref{gibbs}, where we show a cross sectional view of the approximations with increasing values of $n$. As in the univariate case, it seems that the Gibbs phenomenon does not occur. Finally, Table \ref{pieceterror} and Figure \ref{pieceperror} expose a significantly slow convergence rate for this discontinuous function in comparison with the previous continuous examples.

\begin{figure}[h!]
\centering
\includegraphics[scale=0.25,trim=3cm 3cm 4cm 4cm, clip]{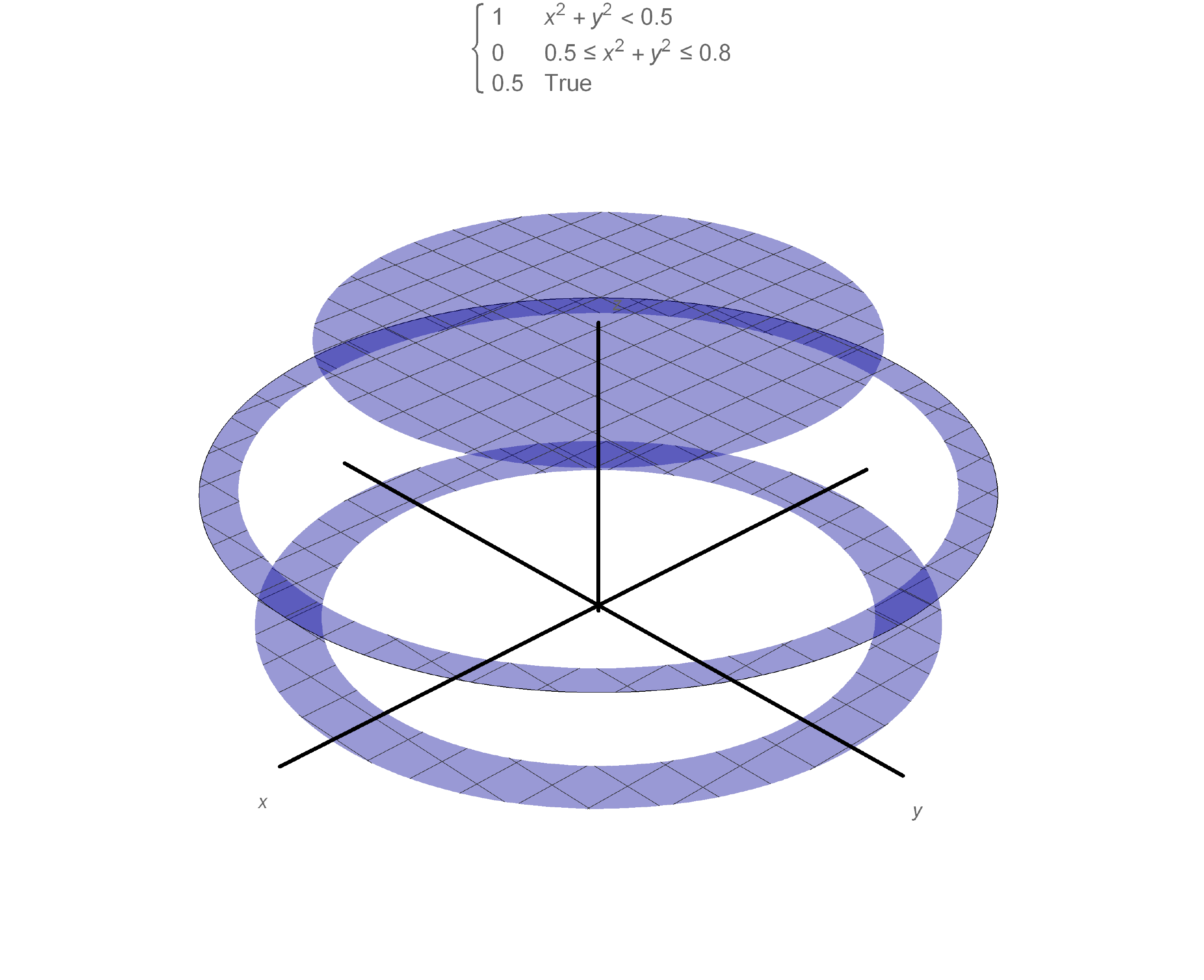}
\caption{Graph of $\eta(x,y)$.}
\label{piece}
\end{figure}

\begin{center}
\begin{figure}[h!]
\begin{tabular}{cc}
\begin{minipage}{0.5\textwidth}
\includegraphics[scale=0.21,trim=5cm 5cm 5cm 0cm, clip]{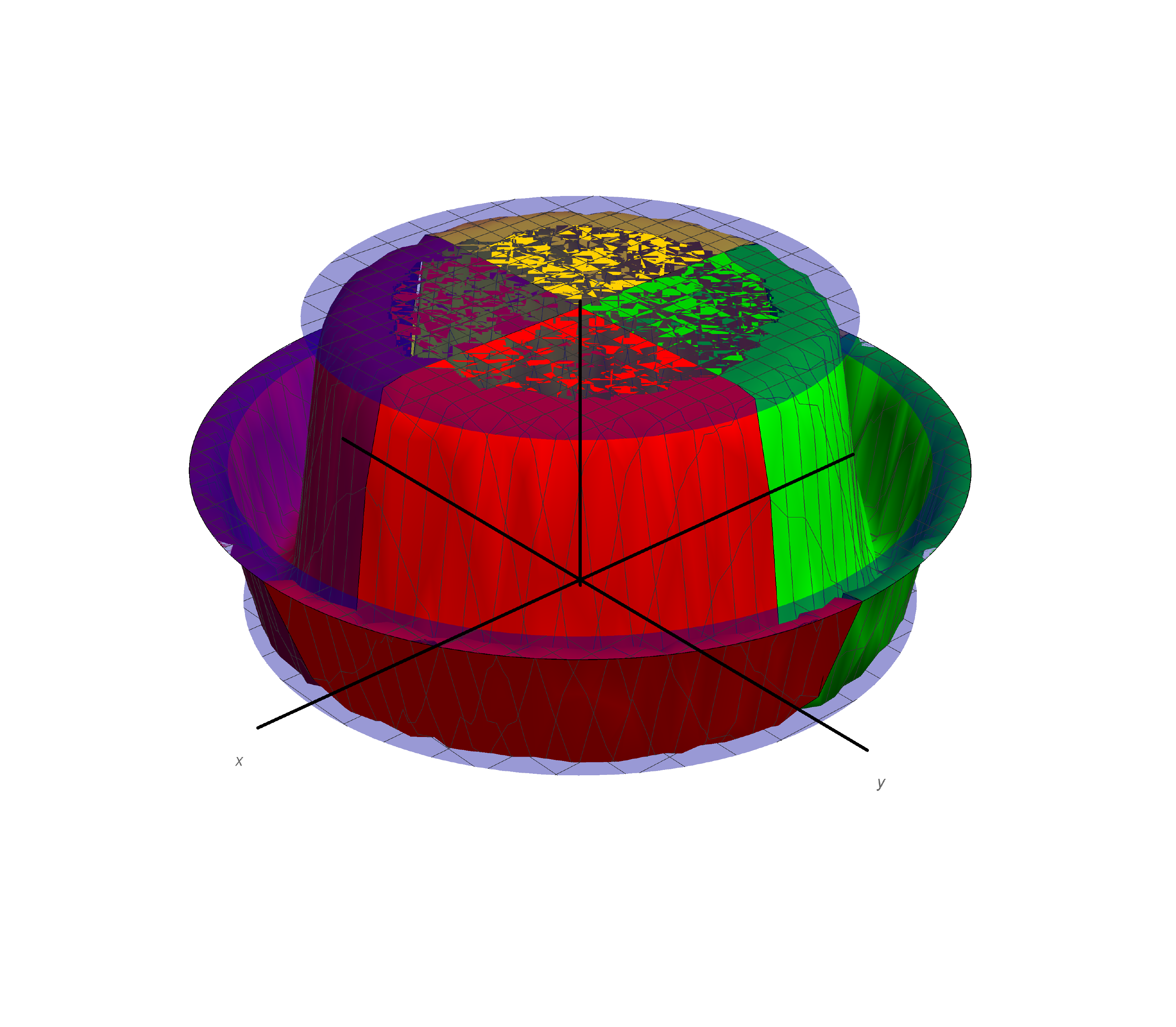} 
\end{minipage}&
\begin{minipage}{0.5\textwidth}
\includegraphics[scale=0.30,trim=12cm 3cm 12cm 0cm, clip]{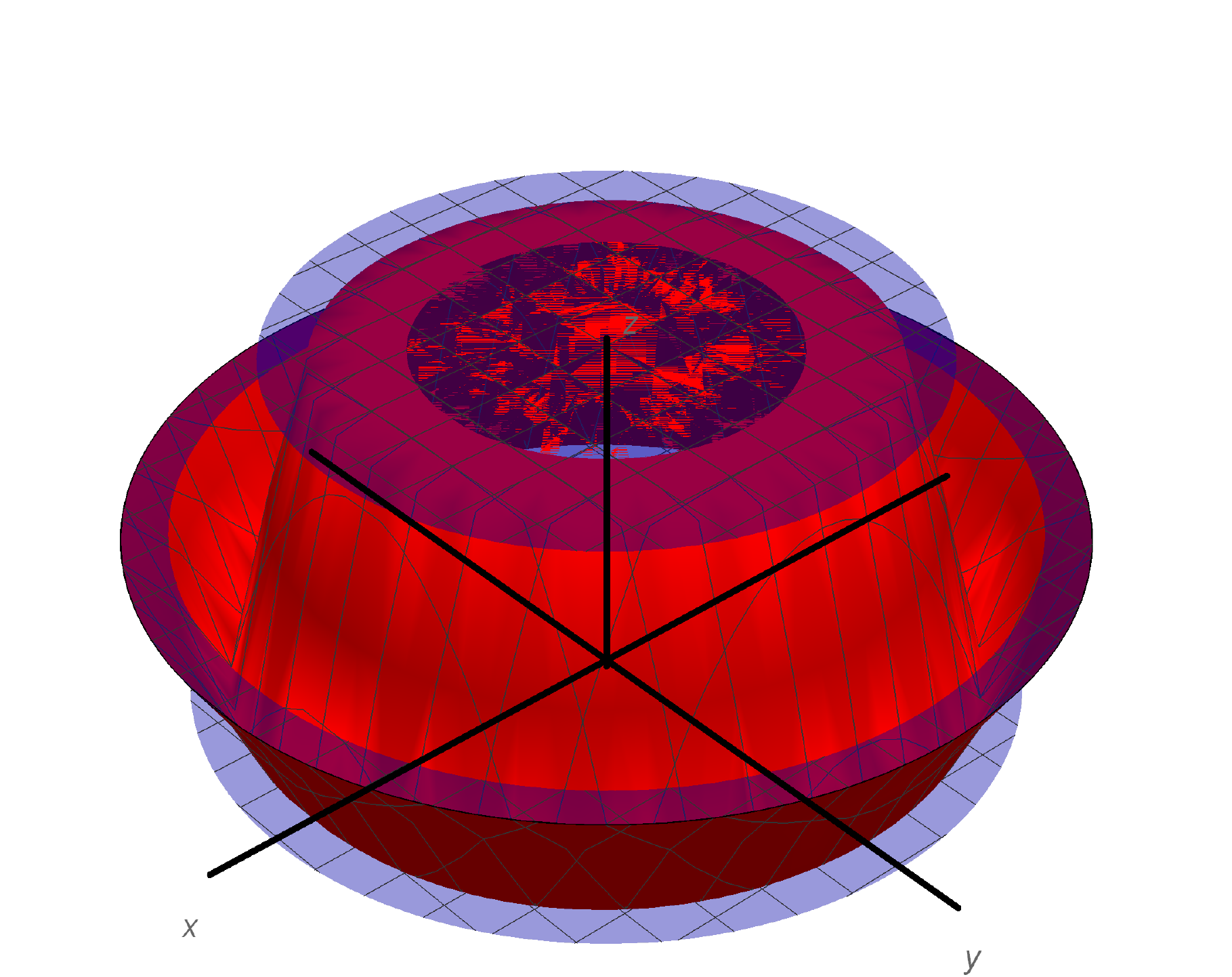} 
\end{minipage}
\end{tabular}
\caption{Approximations overlapped with the graph of $\eta (x,y)$. Left: $\overline{\mathscr{C}}_n[\eta(x,y),\mathbf{B}^2]$. Right: $\widetilde{\mathscr{B}}_n[\eta(x,y),\mathbf{B}^2]$. }
\label{aprox5}
\end{figure}
\end{center}

\begin{center}
\begin{figure}[h!]
\begin{tabular}{cc}
\begin{minipage}{0.49\textwidth}
\includegraphics[scale=0.13,trim=0cm 0cm 0cm 0cm, clip]{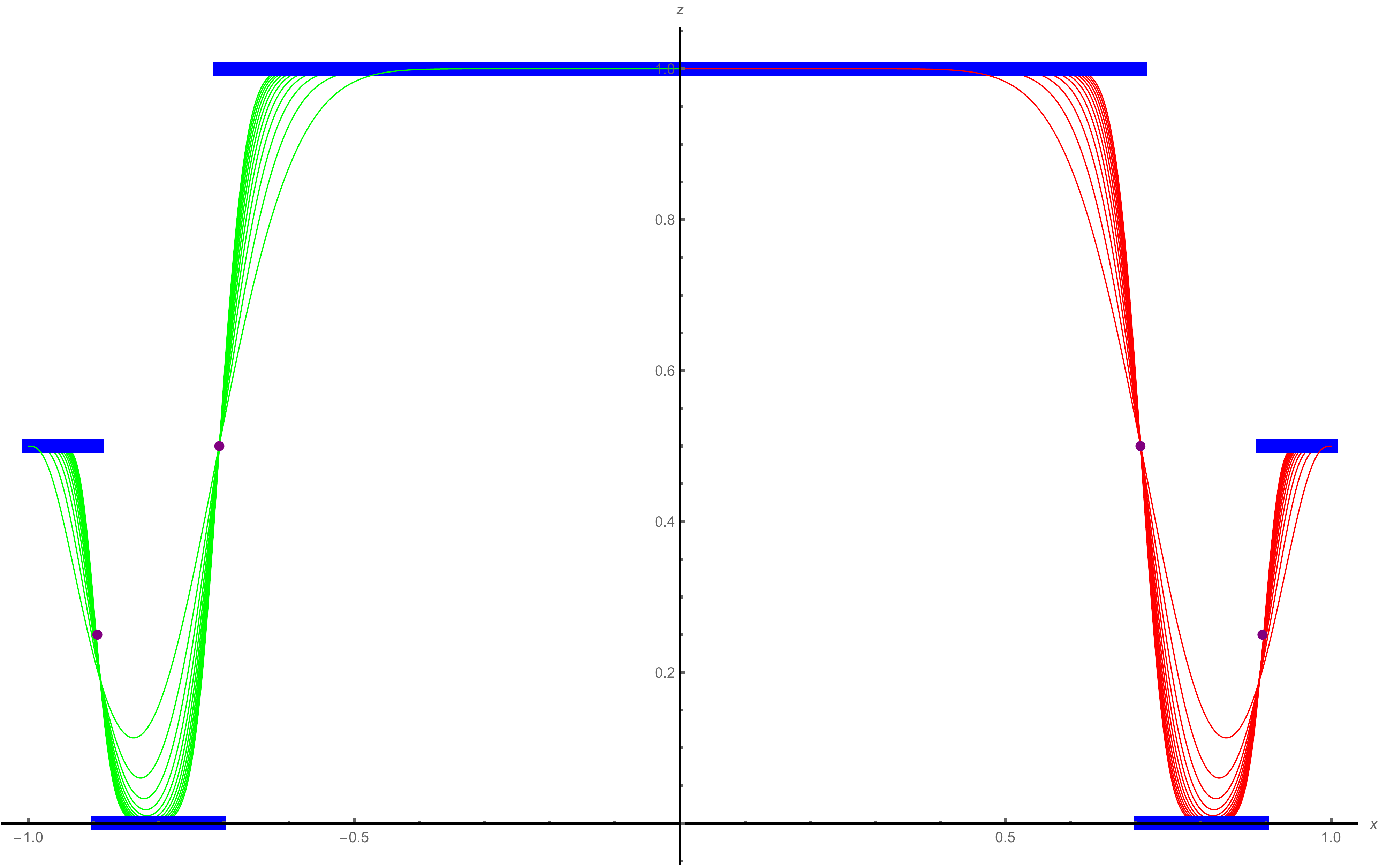} 
\end{minipage}&
\begin{minipage}{0.49\textwidth}
\includegraphics[scale=0.13,trim=0cm 0cm 0cm 0cm, clip]{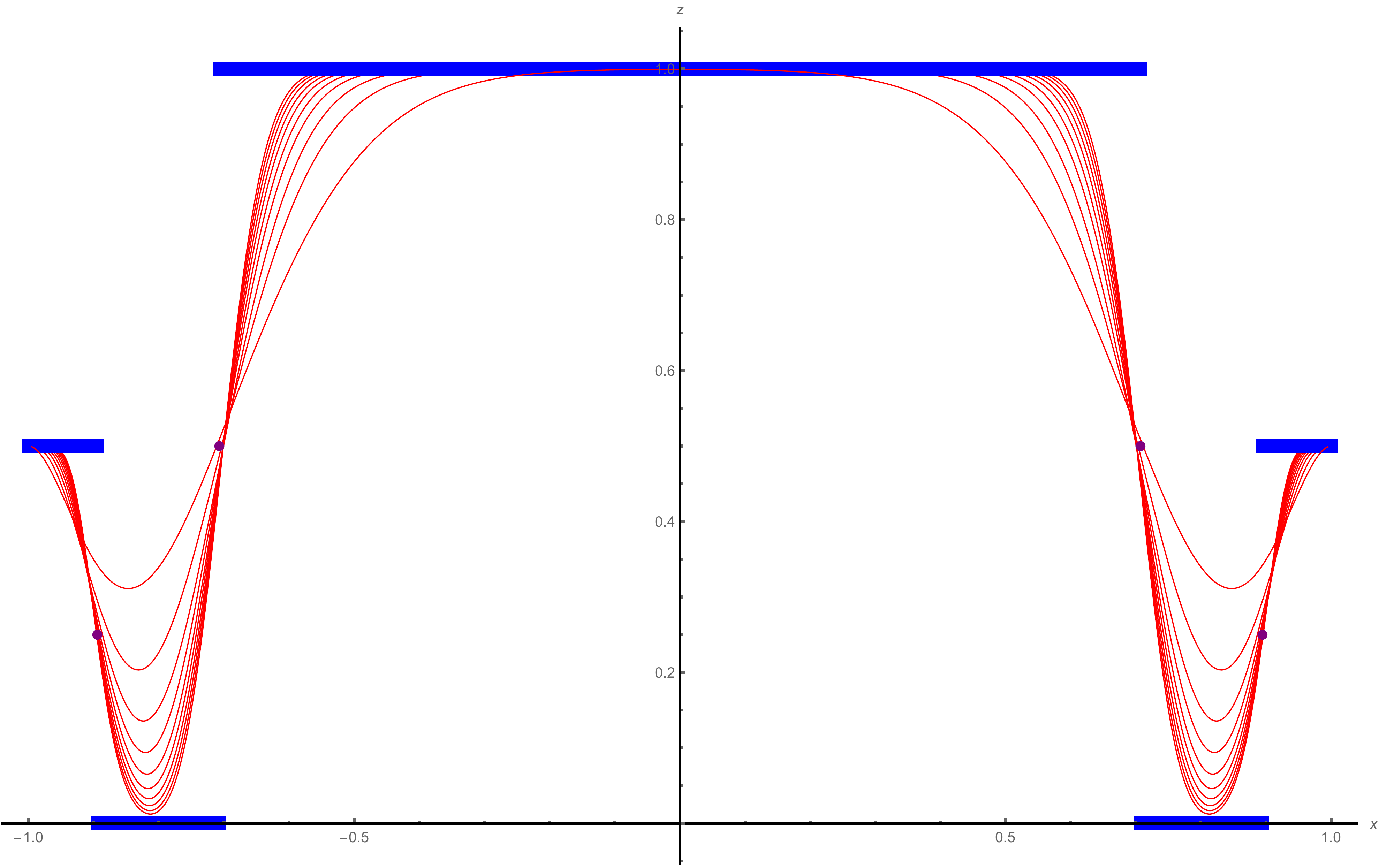} 
\end{minipage}
\end{tabular}
\caption{Cross sectional view of the approximations for increasing values of $n$. Left: $\overline{\mathscr{C}}_n[\eta(x,y),\mathbf{B}^2]$. Right: $\widetilde{\mathscr{B}}_n[\eta(x,y),\mathbf{B}^2]$. }
\label{gibbs}
\end{figure}
\end{center}

\newpage
\bigskip
\begin{minipage}{\textwidth}
\hspace{-1.0cm}
  \begin{minipage}{0.5\textwidth}
    \centering
$
\begin{array}{|c|c|c|}
\hline &&\\[-0.3cm]
n&\overline{\mathscr{C}}_n[\eta(x,y),\mathbf{B}^2]&\widetilde{\mathscr{B}}_n[\eta(x,y),\mathbf{B}^2]\\
\hline  10 & 0.216588 & 0.270366 \\
 20 & 0.175754 & 0.243468 \\
 30 & 0.156563 & 0.223305 \\
 40 & 0.144805 & 0.210916 \\
 50 & 0.136559 & 0.205949 \\
 60 & 0.130305 & 0.192988 \\
 70 & 0.125319 & 0.193887 \\
 80 & 0.121205 & 0.187675 \\
\hline 
\end{array}
$
    \captionof{table}{RMSE for different values of $n$.}
    \label{pieceterror}
  \end{minipage}
\hspace{0cm}
  \begin{minipage}{0.5\textwidth}
    \centering
\begin{tikzpicture}[scale=0.7]
    \begin{axis}[
axis lines=middle, ymin=0.05,xmin=0, xlabel=$n$,
        ylabel=RMSE,x label style={at={(1,0)},right=5pt},
  y label style={at={(0,1)},above=5pt},xtick={0,10,...,80},
     ytick={0,0.05,...,0.35},yticklabel style={
        /pgf/number format/fixed,
        /pgf/number format/precision=2
},]
    \addplot[only marks,mark=*,red] plot coordinates {
(10 ,0.216588)
( 20 , 0.175754)
( 30 , 0.156563)
( 40 , 0.144805)
( 50 , 0.136559 )
( 60 , 0.130305 )
( 70 , 0.125319 )
( 80 , 0.121205 )
    };
    \addlegendentry{$\overline{\mathscr{C}}_{n}[\eta(x,y),\mathbf{B}^2]$}

    \addplot[only marks,color=blue,mark=o]
        plot coordinates {
(10 , 0.270366 )
( 20 , 0.243468 )
 (30 , 0.223305 )
 (40 , 0.210916 )
( 50 , 0.205949 )
( 60 , 0.192988 )
( 70 , 0.193887 )
( 80 , 0.187675 )
        };
    \addlegendentry{$\widetilde{\mathscr{B}}_n[\eta(x,y),\mathbf{B}^2]$}
    \end{axis}
    \end{tikzpicture}
      \captionof{figure}{Plot of RSME in Table \ref{pieceterror}.}
      \label{pieceperror}
    \end{minipage}
  \end{minipage}

\newpage


\end{document}